\documentclass[a4paper, 11pt,psamsfonts]{amsart}
\usepackage{amsmath}
\usepackage{amsthm}
\usepackage{amssymb}
\usepackage{amscd}
\usepackage{amsfonts}
\usepackage{amsbsy}
\usepackage{enumerate}
\usepackage{graphicx}
\usepackage{calc}
\usepackage{xcolor}
\usepackage{setspace}

\newtheorem {theorem} {Theorem}
\newtheorem {proposition}{Proposition}
\newtheorem {corollary}{Corollary}
\newtheorem {lemma}{Lemma}
\newtheorem {definition} {Definition}
\newtheorem {remark}{Remark}
\newtheorem {example} {Example}

\title[Centralizers and normalizers]
{Centralizers and normalizers of local analytic and formal vector fields}

\author[N. Kruff, S. Walcher, X. Zhang]
{Niclas Kruff$^1$, Sebastian Walcher$^1$, Xiang Zhang$^2$ }

\address{$^1$ Lehrstuhl A f\"ur Mathematik, RWTH
Aachen, 52056 Aachen, Germany} \email{\{niclas.kruff,walcher\}@matha.rwth-aachen.de}

\address{$^2$ School of Mathematical Sciences and MOE-LSC, Shanghai Jiao Tong
University, Shanghai, 200240, P. R. China}
\email{xzhang@sjtu.edu.cn}

\subjclass[2010]{34A34; 34C14; 37G05; 37G40.}

\keywords{Local vector field;  centralizer; normalizer; normal form; Jacobi multiplier.}

\begin{document}

\begin{abstract} We investigate the structure of the centralizer and the normalizer of a local analytic or formal differential system at a nondegenerate stationary point, using the theory of Poincar\'e-Dulac normal forms. Our main results are concerned with the formal case. We obtain a description of the relation between centralizer and normalizer, sharp dimension estimates when the centralizer of the linearization has finite dimension, and lower estimates for the dimension of the centralizer in general. For a distinguished class of linear vector fields (which is sufficiently large to be of interest) we obtain a precise characterization of the centralizer for corresponding normal forms in the generic case. {Moreover, in view of their relation to normalizers, we discuss inverse Jacobi multipliers and obtain existence criteria and nonexistence results for several classes of vector fields}.

\end{abstract}

\maketitle

\section{Introduction}
\subsection{Background and motivation} Symmetries of differential equations were first systematically investigated and applied  by Sophus Lie; see \cite{Li 1} and subsequent works; for more contemporary accounts we refer to Olver \cite{Ol 1}, Stephani \cite{Ste}, among many others. The existence of nontrivial symmetries for an ordinary differential equation has consequences for the structure of its solutions, allowing reduction and thus facilitating the analysis. Moreover symmetries are relevant for the dynamics of the system and for integrability questions; see {e.\ g.}\ the monograph \cite{Zh17}, and {the research papers} Aziz et al.\ \cite{ALP14}, Colak et al.\ \cite{CLV14},
Freire et al.\ \cite{FGG}, Garcia et al.\ \cite{GMS},  Gin\'e et al \ \cite{GGL}, Llibre and Valls \cite{LV11}. While there exist algorithms to determine symmetries of higher order ordinary differential equations, the class of (autonomous) first order equations does not allow such an approach (and is therefore even called exceptional by Stephani \cite{Ste}). The present paper is devoted to this class.

Following Lie's fundamental approach, we are mainly interested in local one-parameter groups of symmetries or orbital symmetries of an ordinary differential equation $\dot x=f(x)$. As is well known, the infinitesimal generator $g$ of such a one-parameter group commutes with $f$ (thus $[g,\,f]=0$), respectively normalizes $f$ (thus $[g,\,f]=\lambda f$ with some scalar-valued function  $\lambda$), {see e.g.\ \cite{WaMul}.}
While frequently one investigates vector fields with a priori known symmetries, we consider the direct problem here: Given an analytic vector field $f$, find conditions (and restrictions) that possible infinitesimal symmetries or orbital symmetries have to satisfy.
Local obstructions to the existence of nontrivial commuting or normalizing vector fields appear at stationary points, and only at stationary points. We will discuss the centralizer resp.\ the normalizer at those stationary points which are nondegenerate in the sense that the linearization of the vector field is not nilpotent. We will consider complex vector fields in the present paper; the transfer and application to the real case is unproblematic. We note here that a more general study of the local analytic case in dimension two is due to Cerveau and Lins Neto \cite{CLN}. {Local results have obvious implications for the structure of the global centralizer or normalizer of a vector field on an open and connected set, since restriction from the global to the local setting induces an injective morphism of the corresponding structures.} For the local study, we also turn to formal power series and vector fields, which allows to employ properties of Poincar\'e-Dulac normal forms.
\subsection{Overview and main results}
The paper is organized as follows. In Section \ref{basicsec} we recall some notions and facts; in particular we recall that the structures of local centralizer and normalizer are well understood near nonstationary points (Proposition \ref{nonstatprop}).  For stationary points we turn to normal form theory, and we characterize the relation between formal centralizer and normalizer for formal vector fields in PDNF in Theorem \ref{linnormthm}. \\
In Section \ref{findimsec} we discuss the case when $f$ is in PDNF and the semisimple part of $Df(0)$ has finite dimensional centralizer, which includes the case that $f$ is necessarily linear. Here we clarify the relation between centralizer and normalizer (Proposition \ref{findimcentnorm}) and obtain general sharp estimates for the (necessarily finite) dimension of the centralizer in Theorems \ref{finitedimT1} and \ref{finitedimT2}, illustrated by a number of examples. In particular {we prove that} the dimension of the formal centralizer is always greater or equal to the space dimension. We also note some consequences for the local analytic case. \\
In Section \ref{infidimsec} we discuss vector fields in PDNF with infinite dimensional centralizer of $Df(0)$. We first recall some tools (including a symmetry reduction) from normal form theory and apply these to investigate the centralizer, obtaining a lower estimate for its dimension in Theorem \ref{lowerest}. Since some underlying algebraic structures (the algebra of polynomial first integrals and the module of polynomial vector fields commuting with a given semisimple linear map) may be rather complicated, a complete investigation seems currently out of reach. But for the distinguished case when the underlying algebraic structures are as simple as possible we obtain in Theorems \ref{quadcentthm} and \ref{disalgcentthm} a precise description of the centralizer for (in a precise sense) generic vector fields. Moreover we exhibit several classes of examples for this distinguished case, including a class of coupled oscillators, and {we give} a description of all such vector fields in dimension three.\\
We begin Section \ref{jacobisec} by recalling the relation between normalizer elements and Jacobi last multipliers, and we establish in Proposition \ref{divnofoprop} some special properties of formal inverse Jacobi multipliers for PDNF. We proceed to investigate formal Jacobi last multipliers for several examples, including a subset of the distinguished class from Section \ref{infidimsec}. { This includes all three dimensional vector fields in the above class.
For those we provide conditions for possible mulitipliers and apply these to show nonexistence of formal inverse Jacobi multipliers for a generic case.} {We note that some of the results on three dimensional vector fields originate from the doctoral dissertation \cite{Kruffdiss}.}

\section{Basic notions and results}\label{basicsec}

\subsection{Definitions and known facts}

We let $\emptyset\not=U^*\subseteq U\subseteq \mathbb C^n$, both subsets open and connected. Given an ordinary differential equation
\begin{equation}\label{deq}
\dot x = f(x) \quad\text{ on  } U
\end{equation}
with analytic right hand side, we consider the centralizer
\begin{equation}\label{defcent}
{\mathcal C}_{U^*}(f):=\left\{ g;\, g\text{  analytic on  }U^*,\,\left[g,\,f\right]=0\right\},
\end{equation}
where the Lie bracket is defined by
\[
\left[g,\,h\right](x):=Dh(x)g(x)-Dg(x)h(x),
\]
as usual, with $Dh(x)$ denoting the Jacobian matrix of the vector field $h(x)$. Furthermore we consider the normalizer
\begin{equation}\label{defnorm}
{\mathcal N}_{U^*}(f):=\left\{ g;\, g\text{  analytic on }U^*,\,\left[g,\,f\right]=\mu f\text{  for some analytic  }\mu\right\}.
\end{equation}
As is well known from the work of Lie \cite{Li 1} (see also more contemporary accounts in Olver \cite{Ol 1}, Stephani \cite{Ste}, and in \cite{WaMul}), the local flow of $g\in{\mathcal C}_{U^*}(f)$ yields a local one-parameter symmetry group for system \eqref{deq}, while the local flow of $g\in{\mathcal N}_{U^*}(f)$ yields a local one parameter group of orbital symmetries of \eqref{deq}, thus preserving solution trajectories but not necessarily their parameterization.

We recall some further notions. For analytic $\phi:\,U^*\to \mathbb C$ and $g:\,U^*\to\mathbb C^n$ we define the Lie derivative $X_g(\phi)$ by
\begin{equation}\label{deflied}
 X_g(\phi)(x):=D\phi(x)g(x);
\end{equation}
and $\phi$ is by definition a first integral of $g$ if $X_g(\phi)=0$. {(Note that we include constant functions as first integrals; this is more appropriate from an algebraic perspective.)}
Moreover we have the identities
\begin{equation}\label{bracket}
X_gX_h-X_hX_g=X_{[g,\,h]},
\end{equation}
as well as
\begin{equation}\label{mixid}
\left[g,\,\psi\cdot h\right]=X_g(\psi)\cdot h+\psi\cdot \left[g,\,h\right]
\end{equation}
for analytic functions $\psi$ and analytic vector fields $g$, $h$.

An element of the centralizer ${\mathcal C}_{U^*}(f)$ may be seen as belonging two vector spaces, which give rise to two notions of dimension. (For background and more facts see e.g.\ Bianchi \cite{Bi} and Hermann \cite{Her}.) On the one hand, $g\in {\mathcal C}_{U^*}(f)$ may be seen as an element of the vector space $\mathbb L^n$, where $\mathbb L$ denotes the field of meromorphic functions on $U^*$, i.e., functions that can locally be expresssed as quotients of analytic functions. Linear dependence over this field means linear dependence in $\mathbb C^n$ of the function values at every point, as the next result shows.
\begin{lemma}
Let $h_1,\ldots,h_r$ be analytic vector fields on $U^*$. Then $h_1,\ldots,h_r$ are linearly dependent over $\mathbb L$ if and only if  $h_1(y),\ldots,h_r(y)\in \mathbb C^n$ are linearly dependent over $\mathbb C$, for every $y\in U^*$.
\end{lemma}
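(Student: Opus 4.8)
The plan is to reduce both implications to statements about analytic functions on the connected set $U^*$, namely the vanishing of the maximal minors of the $n\times r$ matrix $M(y)=\bigl(h_1(y)\mid\cdots\mid h_r(y)\bigr)$, and then to invoke the identity theorem. One may assume $r\le n$ throughout: if $r>n$ then both conditions hold automatically, since $\mathbb L^n$ has dimension $n$ over $\mathbb L$ and any $r>n$ vectors in $\mathbb C^n$ are linearly dependent. One may likewise discard any $h_i$ that vanishes identically.

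For the implication ``$\mathbb L$-dependent $\Rightarrow$ pointwise $\mathbb C$-dependent'', suppose $\sum_{i=1}^r\psi_i h_i=0$ with $\psi_i\in\mathbb L$ not all zero. Near an arbitrary point $y_0$, multiplying by a common local denominator yields $\sum_i p_i h_i=0$ with the $p_i$ analytic near $y_0$ and not all identically zero there. At every point $y$ of the (dense) complement of the common zero set of the $p_i$ this is a nontrivial $\mathbb C$-linear relation among $h_1(y),\dots,h_r(y)$, so all $r\times r$ minors of $M$ vanish there; being analytic, they then vanish on a full neighbourhood of $y_0$. Since these minors are globally defined analytic functions on the connected set $U^*$, the identity theorem forces them to vanish on all of $U^*$, which is exactly pointwise $\mathbb C$-dependence.

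For the converse, assume $h_1(y),\dots,h_r(y)$ are $\mathbb C$-dependent for every $y$ and let $\rho$ be the generic rank of $M$, that is, the largest size of a not-identically-zero minor; then $\rho<r$ because $\operatorname{rank}M(y)<r$ everywhere. After relabelling columns and choosing suitable rows, assume the $\rho\times\rho$ minor $\Delta$ formed from columns $1,\dots,\rho$ is not identically zero and that $h_{\rho+1}$ is one of the remaining columns. On the dense open set $\{\Delta\neq0\}$ the vectors $h_1(y),\dots,h_\rho(y)$ are independent and, since $\operatorname{rank}M(y)\le\rho$ there, they span the column space; Cramer's rule therefore gives $h_{\rho+1}(y)=\sum_{j=1}^\rho\psi_j(y)h_j(y)$ with each $\psi_j$ a quotient of $\rho\times\rho$ minors of $M$, hence meromorphic on all of $U^*$. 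The identity $h_{\rho+1}-\sum_{j=1}^\rho\psi_j h_j=0$, holding on a dense open subset of the connected $U^*$, holds everywhere, and it is a nontrivial $\mathbb L$-linear relation. This last direction is the only one with real content; the point to get right is that a single fixed choice of the $\rho$ columns and of the minor $\Delta$ works globally, and that agreement of meromorphic vector fields on a dense open subset of a connected domain propagates to the whole domain.
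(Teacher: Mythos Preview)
Your proof is correct, but you work considerably harder than the paper does. The paper's argument is a single sentence: form the $n\times r$ matrix $H$ with columns $h_1,\ldots,h_r$, and observe that each condition is equivalent to every $r\times r$ minor of $H$ being identically zero, since an analytic function on the connected set $U^*$ is the zero function if and only if it vanishes at every point. Standard linear algebra over the field $\mathbb L$ (respectively $\mathbb C$) gives the equivalence of dependence with vanishing of all maximal minors, and that is all.

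Your forward direction recovers this minor criterion but via a local detour (clearing denominators, passing to a dense set, then invoking the identity theorem); it is correct, just longer. Your converse is genuinely different: rather than inferring $\mathbb L$-dependence directly from the vanishing of all $r\times r$ minors, you identify the generic rank $\rho<r$ and use Cramer's rule to \emph{construct} an explicit meromorphic relation $h_{\rho+1}=\sum_{j\le\rho}\psi_j h_j$. This buys you a concrete dependence relation with coefficients given as ratios of minors, which is more information than the lemma asks for, at the cost of some bookkeeping about propagation from a dense open set. The paper's route is shorter and more symmetric; yours is more constructive.
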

\begin{proof}
Let $H\in\mathbb L^{n\times r}$ denote the matrix with columns $h_1,\ldots, h_r$, and let $\Delta\in\mathbb L$ denote any of its $r\times r$ minors. Then $\Delta$ is not the zero function if and only if $\Delta(y)\not=0$ for some $y\in U^*$.
\end{proof}
On the other hand we may consider ${\mathcal C}_{U^*}(f)$ as a vector space (and a Lie algebra) of functions over $\mathbb C$. We will denote by $\dim {\mathcal C}_{U^*}(f)$  the dimension of this $\mathbb C$-vector space, which we are mainly interested in. We include a proof of the following fact for the reader's convenience.

\begin{proposition}\label{meroprop}
Let $h_1,\ldots,h_r\in {\mathcal C}_{U^*}(f)$ be linearly independent over $\mathbb C$ but linearly dependent over $\mathbb L$. Then the differential equation \eqref{deq} admits a nonconstant meromorphic first integral on $U^*$. In particular $f$ admits a meromorphic first integral whenever $\dim  {\mathcal C}_{U^*}(f)>n$.
\end{proposition}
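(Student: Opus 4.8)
The plan is to exploit the hypothesis of linear dependence over $\mathbb L$ to produce a nontrivial linear relation among the $h_i$ with meromorphic coefficients, and then show that the ratio of two such coefficients is the desired first integral. First I would pick $r$ minimal among all subsets of $\{h_1,\dots,h_r\}$ that are still linearly dependent over $\mathbb L$; by relabeling we may assume the whole collection is minimally dependent, so that any $r-1$ of the $h_i$ are linearly independent over $\mathbb L$. Minimality forces a relation $\sum_{i=1}^r \varphi_i h_i = 0$ with all $\varphi_i\in\mathbb L$ nonzero (if some coefficient vanished we would contradict independence of the remaining $r-1$ vector fields), and this relation is unique up to an overall meromorphic factor. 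Dividing through by $\varphi_r$, we may normalize to $\varphi_r = 1$.

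The key step is then to apply $X_f$, i.e.\ the derivation along $f$, to the normalized relation $\sum_{i=1}^{r-1}\varphi_i h_i + h_r = 0$. Using the product-type rule \eqref{mixid} together with $[h_i,f]=0$ for every $i$ (since $h_i\in{\mathcal C}_{U^*}(f)$), one computes
\begin{equation}\label{eq:apply}
0 = \left[\,f,\ \sum_{i=1}^{r-1}\varphi_i h_i + h_r\,\right] = \sum_{i=1}^{r-1} X_f(\varphi_i)\, h_i,
\end{equation}
where I have used $X_f(\varphi)\cdot h = [f,\varphi\cdot h] - \varphi\cdot[f,h]$ and the antisymmetry of the bracket; note that the $h_r$ term contributes nothing because its coefficient is the constant $1$, so $X_f(1)=0$. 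But $h_1,\dots,h_{r-1}$ are linearly independent over $\mathbb L$ by the minimality choice, so \eqref{eq:apply} forces $X_f(\varphi_i)=0$ for every $i=1,\dots,r-1$; that is, each $\varphi_i$ is a meromorphic first integral of \eqref{deq}. It remains to check that at least one $\varphi_i$ is nonconstant. If all of them were constant, then $\sum_{i=1}^{r-1}\varphi_i h_i + h_r = 0$ would be a nontrivial $\mathbb C$-linear relation among $h_1,\dots,h_r$, contradicting their linear independence over $\mathbb C$. Hence some $\varphi_i$ is a nonconstant meromorphic first integral on $U^*$.

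For the final assertion, suppose $\dim{\mathcal C}_{U^*}(f)>n$. Choose $n+1$ elements $h_1,\dots,h_{n+1}\in{\mathcal C}_{U^*}(f)$ that are linearly independent over $\mathbb C$. Since they are $n+1$ vector fields in $\mathbb C^n$, their values at any point $y\in U^*$ are linearly dependent over $\mathbb C$, so by the Lemma they are linearly dependent over $\mathbb L$. The first part of the proposition now applies and yields a nonconstant meromorphic first integral of $f$.

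The main obstacle I anticipate is purely bookkeeping: one must pass from the given (possibly non-minimal) dependence to a minimal one so that the surviving coefficients are guaranteed nonzero and the surviving vector fields are $\mathbb L$-independent — this is what makes the derivation-killing argument in \eqref{eq:apply} go through cleanly. The application of \eqref{mixid} and the bracket identities is routine, and the non-constancy dichotomy at the end is immediate from $\mathbb C$-independence, so no genuine analytic difficulty arises beyond the meromorphic Cramer-type fact already recorded in the Lemma.
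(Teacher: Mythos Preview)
Your proof is correct and follows essentially the same approach as the paper: pass to a minimal $\mathbb L$-dependent subcollection (the paper phrases this dually as a maximal $\mathbb L$-independent initial segment), normalize one coefficient, apply \eqref{mixid} together with $[f,h_i]=0$ to kill all but the $X_f(\varphi_i)h_i$ terms, and conclude by $\mathbb L$-independence that each $\varphi_i$ is a first integral, with nonconstancy forced by $\mathbb C$-independence. The final assertion is handled identically via the pigeonhole observation that $n+1$ vector fields in $\mathbb L^n$ are automatically $\mathbb L$-dependent.
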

\begin{proof}
With no loss of generality we may assume that, for some $s<r$, the vector fields $h_1,\ldots,h_s$ are linearly independent but $h_1,\ldots,h_{s+1}$ are linearly dependent over $\mathbb L$, hence
\[
h_{s+1}=\mu_1h_1+\cdots+\mu_sh_s
\]
with suitable $\mu_i\in\mathbb L$. Using \eqref{mixid} we find
\[
0=\left[f,h_{s+1}\right]=\sum_{i=1}^s\mu_i\left[f,\,h_i\right]+ \sum_{i=1}^s X_f(\mu_i)h_i=\sum_{i=1}^s X_f(\mu_i)h_i
\]
and therefore $X_f(\mu_1)=\cdots =X_f(\mu_s)=0$, whence every $\mu_i$ is a meromorphic first integral (possibly constant). Due to the linear independence of the $h_i$ over $\mathbb C$, not all $\mu_i$ are constant.

The last assertion follows since there cannot be more than $n$ linearly independent elements in $\mathbb L^n$.
\end{proof}

Given any $y\in U^*\subseteq U$, we may pass from $ {\mathcal C}_{U^*}(f)$ to the local analytic centralizer
\begin{equation}\label{defloccent}
{\mathcal C}_y(f):=\left\{ g;\, g\text{  analytic at }y,\,\left[g,\,f\right]=0\right\},
\end{equation}
and similarly to  the local analytic normalizer ${\mathcal N}_y(f)$. We call $\mathbb F$ the field of local meromorphic functions in $y$, and note that Proposition \ref{meroprop} carries over to the local setting.
The structure of the local centralizer and of the local normalizer is well understood near any nonstationary point; we recall the pertinent result for convenient reference.
\begin{proposition}\label{nonstatprop}
Let $n>1$ and $y\in U$ nonstationary for system \eqref{deq}. Then:
\begin{enumerate}[$(a)$]
\item There exist $g_1,\ldots, g_n\in {\mathcal C}_y(f)$ that are linearly independent over $\mathbb F$ such that
\[
{\mathcal C}_y(f)=\left\{\sum\mu_i g_i;\,X_f(\mu_1)=\cdots =X_f(\mu_n)=0\right\}.
\]
In particular ${\mathcal C}_y(f)$ has infinite dimension over $\mathbb C$.
\item
The normalizer has the following structure:
\[
{\mathcal N}_y(f)={\mathcal C}_y(f)+\left\{\beta f;\,\beta\text{  analytic in }y\right\}.
\]
\item Given $g\in{\mathcal N}_y(f)$ and $g(y)\not=0$, there exists a local analytic $\sigma$ with $\sigma(y)=1$ such that
\[
\left[g,\,\sigma f\right]=0;
\]
thus $g$ lies in the centralizer of some vector field with the same local solution trajectories as $f$.
\end{enumerate}
\end{proposition}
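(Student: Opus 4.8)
The plan is to reduce all three assertions to the flow-box (rectification) theorem. Since $y$ is nonstationary, there are local analytic coordinates $z=(z_1,\dots,z_n)$ centered at $y$ in which $f$ becomes the constant field $\partial/\partial z_1$; once $f$ has this form, each part follows from a short computation with the definition of the Lie bracket and the identity \eqref{mixid}. For part $(a)$ I would write a vector field in the new coordinates as $g=\sum_i g_i\,\partial/\partial z_i$. Since $Df=0$, the bracket definition gives $[g,f]=-\partial g/\partial z_1$, so $g\in{\mathcal C}_y(f)$ precisely when each component $g_i$ is independent of $z_1$. Taking $g_i:=\partial/\partial z_i$ for $i=1,\dots,n$ (note that $g_1=f$) then produces elements of ${\mathcal C}_y(f)$ that are manifestly linearly independent over $\mathbb F$, while a general centralizer element is $\sum\mu_i g_i$ with each $\mu_i$ analytic and independent of $z_1$, i.e.\ with $X_f(\mu_i)=\partial\mu_i/\partial z_1=0$; this is exactly the asserted description. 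Infinite $\mathbb C$-dimension is then clear because $n>1$: the powers of $z_2$ give infinitely many $\mathbb C$-linearly independent first integrals of $f$, and each of them times $g_1$ remains in the centralizer.

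For part $(b)$ the inclusion ``$\supseteq$'' is immediate, since ${\mathcal C}_y(f)\subseteq{\mathcal N}_y(f)$ and, by \eqref{mixid}, $[\beta f,f]=-X_f(\beta)f$ for every analytic $\beta$. For the reverse inclusion I would take $g\in{\mathcal N}_y(f)$ with $[g,f]=\mu f$ and $\mu$ analytic, and read this in the rectifying coordinates: $-\partial g/\partial z_1=\mu\,e_1$, so $g_2,\dots,g_n$ are independent of $z_1$ while $g_1$ is an unconstrained analytic function. Then $\beta:=g_1$ is analytic at $y$ and $g-\beta f=\sum_{i\ge 2}g_i\,\partial/\partial z_i$ lies in ${\mathcal C}_y(f)$ by part $(a)$.

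For part $(c)$, by \eqref{mixid} one has $[g,\sigma f]=(X_g(\sigma)+\mu\sigma)f$, so the task reduces to producing an analytic $\sigma$ with $\sigma(y)=1$ solving $X_g(\sigma)=-\mu\sigma$. Here I would rectify $g$ rather than $f$: since $g(y)\neq 0$, there are analytic coordinates $w$ centered at $y$ with $g=\partial/\partial w_1$, and the equation becomes the linear ordinary differential equation $\partial\sigma/\partial w_1=-\mu\sigma$ in $w_1$, with $w_2,\dots,w_n$ as parameters and $\mu$ still analytic. Its solution with value $1$ on the slice $\{w_1=0\}$, namely $\sigma=\exp(-\int_0^{w_1}\mu\,dt)$, is analytic and satisfies $\sigma(y)=1$. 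Since $\sigma$ is then nonvanishing near $y$, the field $\sigma f$ has the same local solution trajectories as $f$, merely reparametrized, and $g$ lies in its centralizer.

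The computations above are routine once the flow-box theorem is invoked; the step I expect to be the main obstacle is part $(c)$, where the auxiliary equation $X_g(\sigma)=-\mu\sigma$ must be solved in the analytic category and with the normalization $\sigma(y)=1$ — this is precisely what rectifying $g$ delivers, and it is the reason the hypothesis $g(y)\neq 0$ is needed. I would also take care to check that the rectifying diffeomorphisms are genuinely analytic, so that analyticity and the local centralizer and normalizer structures carry over unchanged between the coordinate systems.
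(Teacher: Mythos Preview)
Your proposal is correct and follows essentially the same route as the paper's proof: rectify $f$ via the flow-box theorem for parts $(a)$ and $(b)$, and rectify $g$ for part $(c)$, reducing each assertion to an elementary computation or a first-order ODE. The only cosmetic difference is in part $(b)$: the paper solves $X_f(\alpha)=\lambda$ by integrating in $z_1$, whereas you read off $\beta=g_1$ directly from the rectified coordinates---both yield the same decomposition.
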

\begin{proof}
By the straightening theorem one may assume that $f$ is a constant vector field, say $f=e_1$, where $e_1$ is the first element of the standard basis of $\mathbb C^n$. (Recall that Lie brackets are compatible with coordinate transformations.) Then $\left[g,\,f\right]=0$ is equivalent to $\partial g/\partial x_1=0$, thus the entries of $g$ depend locally only on $x_2,\ldots,x_n$. Therefore we may take $g_i=e_i$ as constants, and the coefficient functions are first integrals of $f$. To prove part $(b)$, assume $[g,\,f]=\lambda f$. Then
\[
[g-\alpha f,\,f]=(\lambda-X_f(\alpha))\,f
\]
for any $\alpha$, and the (linear first order partial differential) equation $\lambda=X_f(\alpha)=\partial \alpha/\partial x_1$ has a local solution $\beta$, hence $g-\beta f\in {\mathcal C}_y(f)$. Finally, for part (c) note that $\left[g,\,f\right]=\lambda f$ implies $\left[g,\,\sigma f\right]=(X_g(\sigma)+\lambda\sigma)\,f$. Straightening $g$ (or invoking familiar facts about linear partial differential equations), one sees that the equation $X_g(\sigma)+\lambda\sigma=0$ has a solution with $\sigma(y)=1$.
\end{proof}
{The proposition and its proof show that the structures of the local centralizer or the local normalizer reflect characteristic properties of a vector field, and thus is of interest, only near stationary points. These structures are therefore in the focus of the present paper.}
\subsection{Local centralizers and normalizers near stationary points}
We consider the centralizer of the analytic vector field $f$ near a stationary point, which we take to be $0$. Thus $f$ admits a Taylor expansion
\begin{equation}\label{taylor}
f(x)=Ax+\sum_{j\geq 2} f_j(x)
\end{equation}
with $A\in \mathbb C^{n\times n}$, and each $f_j$ a homogeneous vector polynomial of degree $j$.  In order to enable working with Poincar\'e-Dulac normal forms, we extend the discussion from power series with a nonempty domain of convergence to formal power series. We denote by $\mathbb C[[x_1,\ldots, x_n]]$ the algebra of formal power series, and by $\mathbb C((x_1,\ldots,x_n))$ its quotient field, noting that the definitions and identities such as \eqref{defcent}, \eqref{deflied}, \eqref{bracket} and \eqref{mixid} carry over. Moreover we consider
\[ 
{\mathcal C}_0^{\rm for}(f):=\left\{g\in\mathbb C[[x_1,\ldots,x_n]]^n;\,\left[g,\,f\right]=0\right\},
\] 
and analogously ${\mathcal N}_0^{\rm for}(f)$, whose elements satisfy $[g,\,f]=\lambda f$ with some $\lambda\in \mathbb C[[x_1,\ldots,x_n]]$. \\
Poincar\'e-Dulac normal forms (PDNF) and their special properties make many arguments more transparent. We recall the coordinate-invariant definition (see e.g.\ \cite{Wa91}):
Given the Jordan-Chevalley decomposition $A=A_s+A_n$ with $A_s$ semisimple, $A_n$ nilpotent and $\left[A_s,\,A_n\right]=0$, $f$ as given in \eqref{taylor} is in PDNF if $[A_s,\,f]=0$; equivalently each $[A_s,\,f_j]=0$. As is well known (see e.g. \cite{Br89}), any local analytic vector field admits a formal power series transformation to PDNF, but in general no convergent transformation exists. Obviously when $f$ is analytic then ${\mathcal C}_0(f)\subseteq {\mathcal C}_0^{\rm for}(f)$ and ${\mathcal N}_0(f)\subseteq {\mathcal N}_0^{\rm for}(f)$; this allows to transfer some results from the formal to the analytic case. \\
We will frequently use the following facts (see e.g.\ \cite{Wa91}, Propositions 1.3 and 1.4):
\begin{lemma}\label{jochlem} Given the Jordan-Chevalley decomposition $A=A_s+A_n$ into semisimple and nilpotent part, the following hold:
\begin{enumerate}[$(a)$]
\item The Lie derivative $X_A$ sends every space $S_k$ of homogeneous polynomials of degree $k$ to itself, and $X_A=X_{A_s}+X_{A_n}$ is the corresponding Jordan-Chevalley decomposition.
\item The adjoint action ${\rm ad}\,A$ sends every space ${\mathcal P}_k$ of homogeneous polynomial vector fields of degree $k$ to itself, and ${\rm ad}\,A={\rm ad}\,A_s+{\rm ad}\,A_n$ is the corresponding Jordan-Chevalley decomposition.
\end{enumerate}
\end{lemma}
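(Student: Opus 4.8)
The plan is to reduce both assertions to the uniqueness of the abstract Jordan--Chevalley decomposition of a linear endomorphism of a finite-dimensional space: if $T=S+N$ with $S$ semisimple, $N$ nilpotent and $SN=NS$, then $S$ and $N$ are \emph{the} semisimple and nilpotent parts of $T$. So for each fixed $k$ it suffices to verify, for the relevant operators restricted to $S_k$ (resp.\ ${\mathcal P}_k$), four things: invariance of the space, the additive decomposition, commutativity of the two summands, and semisimplicity of the first together with nilpotency of the second. Throughout, one may pass to coordinates in which $A_s={\rm diag}(\lambda_1,\ldots,\lambda_n)$ and $A_n$ is strictly upper triangular (possible simultaneously since $A_n$ commutes with $A_s$ and hence preserves each eigenspace of $A_s$); this is harmless, because Lie brackets are compatible with linear changes of coordinates and these transform $A=A_s+A_n$ into the Jordan--Chevalley decomposition of the transformed matrix.

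For part $(a)$: invariance of $S_k$ under $X_A$, $X_{A_s}$, $X_{A_n}$ is immediate from the degree count, since $X_B(\phi)(x)=D\phi(x)Bx$ and $D\phi$ has degree $k-1$ for $\phi\in S_k$. The identity $X_A=X_{A_s}+X_{A_n}$ holds because $B\mapsto X_B$ is linear in $B$. For commutativity, apply \eqref{bracket} to the linear vector fields $g(x)=A_sx$, $h(x)=A_nx$: their bracket is $(A_nA_s-A_sA_n)x=0$, whence $X_{A_s}X_{A_n}-X_{A_n}X_{A_s}=X_0=0$. That $X_{A_s}$ is semisimple follows because, in the chosen coordinates, each monomial satisfies $X_{A_s}(x^m)=\left(\sum_l m_l\lambda_l\right)x^m$, so the monomial basis of $S_k$ is an eigenbasis. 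That $X_{A_n}$ is nilpotent follows because $X_{A_n}$ turns a monomial into a sum of monomials each obtained by replacing a factor $x_i$ by a factor $x_j$ with $j>i$; hence $X_{A_n}$ strictly decreases the weight $w(x^m):=\sum_i(n-i)m_i$, which takes only finitely many values on $S_k$, so $X_{A_n}|_{S_k}$ is nilpotent. Uniqueness of the Jordan--Chevalley decomposition now gives the claim for every $k$.

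Part $(b)$ follows the same template. Up to the sign convention, ${\rm ad}\,B$ acts by $h\mapsto[Bx,\,h]$, i.e.\ $({\rm ad}\,B)(h)(x)=Dh(x)Bx-Bh(x)$, which maps ${\mathcal P}_k$ to itself by the degree count, and $B\mapsto{\rm ad}\,B$ is linear, giving ${\rm ad}\,A={\rm ad}\,A_s+{\rm ad}\,A_n$. Commutativity of ${\rm ad}\,A_s$ and ${\rm ad}\,A_n$ follows from the Jacobi identity for the vector field bracket together with $[A_sx,\,A_nx]=0$. In the chosen coordinates one computes $({\rm ad}\,A_s)(x^me_i)=\left(\sum_l m_l\lambda_l-\lambda_i\right)x^me_i$, so $\{x^me_i\}$ is an eigenbasis of ${\mathcal P}_k$ and ${\rm ad}\,A_s$ is semisimple. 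Finally, write $({\rm ad}\,A_n)(h)=\widehat X_{A_n}(h)-M(h)$, where $\widehat X_{A_n}$ applies $X_{A_n}$ to each component of $h$ and $M(h)=A_nh$ is left multiplication by the nilpotent matrix $A_n$: the first is nilpotent on ${\mathcal P}_k$ by part $(a)$, the second is nilpotent since $A_n^N=0$, and they commute because constants pass through the derivation $X_{A_n}$. A difference of commuting nilpotent operators is nilpotent, so ${\rm ad}\,A_n|_{{\mathcal P}_k}$ is nilpotent, and uniqueness of Jordan--Chevalley finishes the proof.

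The only steps requiring a little care are the two nilpotency arguments for $X_{A_n}$ and ${\rm ad}\,A_n$; everything else is formal bookkeeping. Both statements may also be seen as instances of the general fact that morphisms of finite-dimensional representations of an algebraic group preserve Jordan decomposition (here $S_k$ and ${\mathcal P}_k$ are assembled from the standard representation and its dual), but the weight/filtration argument above is entirely self-contained and avoids that machinery.
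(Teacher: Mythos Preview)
Your argument is correct and self-contained. The paper itself does not provide a proof of this lemma; it is stated with the reference ``see e.g.\ \cite{Wa91}, Propositions 1.3 and 1.4'' and no further justification. Your approach---verifying invariance, additivity, commutation, semisimplicity and nilpotency separately, then invoking uniqueness of the Jordan--Chevalley decomposition on each finite-dimensional piece---is the standard one, and the weight argument for the nilpotency of $X_{A_n}$ together with the decomposition ${\rm ad}\,A_n=\widehat X_{A_n}-M$ into commuting nilpotents is a clean way to handle the only nontrivial points. Nothing is missing.
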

A starting point for the investigation of the formal centralizer is the following well-known result, which is a consequence of Proposition 1.5 in \cite{Wa91}. We include a direct proof here, for the reader's convenience.
\begin{proposition}\label{lincentprop}
Let system \eqref{deq} be in Poincar\'e-Dulac normal form, and $g\in{\mathcal C}_0^{\rm for}(f)$. Then also $g\in{\mathcal C}_0^{\rm for}(A_s)$.
\end{proposition}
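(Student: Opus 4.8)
The plan is to exploit that ${\rm ad}\,A_s$ is a \emph{semisimple derivation} of the Lie algebra of formal vector fields; this decouples the single relation $[g,f]=0$ into one relation for each eigenvalue of ${\rm ad}\,A_s$, and a minimal-degree argument then eliminates every ``non-resonant'' piece of $g$.

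First I would record the relevant structure. By Lemma \ref{jochlem}(b), ${\rm ad}\,A_s$ is semisimple on each homogeneous component $\mathcal P_k$, so collecting, over all degrees, the components lying in a fixed eigenspace of ${\rm ad}\,A_s$ assigns to every formal vector field $h$ a unique decomposition $h=\sum_\mu h_\mu$ into \emph{weight components} $h_\mu$ (after diagonalizing $A_s$ these are the sums of monomial vector fields $x^\nu e_i$ with $\langle\nu,\lambda\rangle-\lambda_i=\mu$, and the weight-$0$ part is exactly the resonant part). Since ${\rm ad}\,A_s=[A_sx,\,\cdot\,]$ obeys the Jacobi identity, it is a derivation of the bracket, so weights add: if $h$ has weight $\mu$ and $k$ has weight $\nu$, then $[h,k]$ has weight $\mu+\nu$. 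The assumption that $f$ is in Poincar\'e--Dulac normal form, i.e.\ $[A_s,f]=0$, says precisely that $f$ has weight $0$.

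Next I would write $g=\sum_\mu g_\mu$. Then $0=[g,f]=\sum_\mu[g_\mu,f]$, and because $f$ has weight $0$ each $[g_\mu,f]$ again has weight $\mu$; comparing weight components yields $[g_\mu,f]=0$ for every individual $\mu$. It therefore suffices to show that $g_\mu=0$ whenever $\mu\neq 0$. Suppose $g_\mu\neq 0$ and let $d$ be the least degree for which the homogeneous component $(g_\mu)_d$ is nonzero. Using $f=Ax+\sum_{j\geq 2}f_j$ together with the fact that $g_\mu$ has no component of degree below $d$, the only degree-$d$ term of $[g_\mu,f]$ is $[(g_\mu)_d,Ax]$, so ${\rm ad}\,A\,(g_\mu)_d=0$ in $\mathcal P_d$. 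By Lemma \ref{jochlem}(b), ${\rm ad}\,A={\rm ad}\,A_s+{\rm ad}\,A_n$ is the Jordan--Chevalley decomposition on $\mathcal P_d$; since $(g_\mu)_d$ lies in the weight-$\mu$ eigenspace of ${\rm ad}\,A_s$ and $[{\rm ad}\,A_s,\,{\rm ad}\,A_n]=0$, this becomes $\mu\,(g_\mu)_d+{\rm ad}\,A_n\,(g_\mu)_d=0$. As ${\rm ad}\,A_n$ is nilpotent on $\mathcal P_d$, the operator $\mu\,{\rm id}+{\rm ad}\,A_n$ is invertible for $\mu\neq 0$, forcing $(g_\mu)_d=0$ --- contradicting the choice of $d$. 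Hence $g=g_0$, which is exactly the statement $[A_s,g]=0$, i.e.\ $g\in{\mathcal C}_0^{\rm for}(A_s)$.

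I do not foresee a genuine obstacle. The two points that need some care are the passage from polynomials to formal power series, handled by always working with the least nonzero degree, and keeping the bracket sign conventions straight. The one load-bearing observation is that ${\rm ad}\,A_s$ is a semisimple derivation, which is what lets $[g,f]=0$ split over the weight grading so that the non-resonant components of $g$ satisfy the same equation; everything after that is the standard resonant/non-resonant dichotomy powered by the nilpotence of ${\rm ad}\,A_n$.
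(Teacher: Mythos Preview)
Your proof is correct, but the route differs from the paper's. The paper argues by induction on the degree of the homogeneous components of $g$: from the degree-$(r{+}j)$ relation
\[
[A,g_{r+j}]+[f_2,g_{r+j-1}]+\cdots+[f_{j+1},g_r]=0
\]
it applies ${\rm ad}\,A_s$, uses $[A_s,f_i]=0$ and the induction hypothesis $[A_s,g_{r+i}]=0$ for $i<j$ to kill all but the first term, and concludes $g_{r+j}\in\ker({\rm ad}\,A_s)^2=\ker{\rm ad}\,A_s$. You instead split $g$ by the weight grading of ${\rm ad}\,A_s$ and observe that, since $f$ has weight $0$, the single relation $[g,f]=0$ decouples into $[g_\mu,f]=0$ for every weight $\mu$; a minimal-degree argument then forces $(\mu\,{\rm id}+{\rm ad}\,A_n)(g_\mu)_d=0$, hence $g_\mu=0$ for $\mu\neq 0$.

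Both arguments rest on the same Jordan--Chevalley input from Lemma~\ref{jochlem}(b), just packaged differently. The paper's induction is slightly more elementary in that it never needs to contemplate the (possibly infinite) weight decomposition of a formal series. Your version is more structural: it isolates the key mechanism---that ${\rm ad}\,A_s$ is a semisimple derivation, so bracketing with a weight-$0$ element preserves each weight space---and as a byproduct shows the stronger fact that each individual weight component $g_\mu$ already lies in ${\mathcal C}_0^{\rm for}(f)$. The one point you flagged yourself, well-definedness of $g=\sum_\mu g_\mu$ at the level of formal series, is indeed harmless since in each fixed degree only finitely many weights occur.
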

\begin{proof}
 Let  $g=g_r+\cdots$, with $g_r\neq 0$. Then $[g,f]=0$ is equivalent to
\begin{equation}\label{centcondeq}
 [A,g_{r+j}]+[f_2,g_{r+j-1}]+\cdots +[f_{j+1},g_r]=0, \qquad \text{all}\;\,j\geq 0.
\end{equation}
We show by induction that $[A_s,g_{r+j}]=0$ for all $j\geq 0$. For $j=0$ the assertion follows from $[A,g_r]=0$ and $\ker({\rm ad}\,A)\subseteq \ker({\rm ad}\,A_s)$ on $\mathcal{P}_r$. For the induction step apply ${\rm ad}\, A_s$ to \eqref{centcondeq} and recall that $A_s$ and $f_j$ commute for all $j$. Therefore
\[
 \bigl[A,[A_s,g_{r+j}]\bigr]+\bigl[f_2,[A_s,g_{r+j-1}]\bigr]+\cdots +\bigl[f_{j+1},[A_s,g_r]\bigr]=0.
\]
By induction hypothesis all terms after the first one vanish, hence one sees that $g_{r+j}\in \ker({\rm ad}\,A_s)^2=\ker{\rm ad}\,A_s$.
\end{proof}
We next turn to the formal normalizer. The connection to the centralizer is not as straightforward as for nonstationary points in Proposition \ref{nonstatprop}, but some properties persist.
\begin{theorem}\label{linnormthm}
Let system \eqref{deq} be in Poincar\'e-Dulac normal form, with $A_s\not=0$, and $g\in{\mathcal N}_0^{\rm for}(f)$ with $g(0)=0$.
\begin{enumerate}[$(a)$]
\item There exists a power series $\beta$ such that
\[
\left[g-\beta f,\,f\right]=\alpha f,\quad X_{A_s}(\alpha)=0
\]
with some $\alpha=\alpha_0+\alpha_1+\cdots\in\mathbb C[[x_1,\ldots,x_n]]$, and $\alpha_0=0$.\\
In particular
\[
\mathcal{N}_0^{\rm for}(f)=\mathcal{C}_0^{\rm for}(f)+\left\{\beta f;\,\beta\in\mathbb C[[x_1,\ldots,x_n]]\right\}
\]
whenever $A_s$ admits only constant formal first integrals.
\item The vector field $h:=g-\beta f$ satisfies $[A_s,h]=0$.
\item There exists an invertible series $\sigma=1+\sigma_1+\cdots$ and $\alpha^*\in\mathbb C[[x_1,\ldots,x_n]]$ such that $X_{A_s}(\sigma)=0$ and
\[
\left[h,\,\sigma f\right]=\alpha^* \cdot \sigma f,\quad  X_{A_s}(\alpha)^*= X_{C_s}(\alpha^*)=0.
\]
\end{enumerate}

\end{theorem}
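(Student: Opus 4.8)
The plan is to solve, degree by degree, a chain of homological equations, using at each degree the Jordan--Chevalley decompositions of the relevant operators on homogeneous polynomials resp.\ homogeneous polynomial vector fields (Lemma~\ref{jochlem}) together with the PDNF identity $[A_sx,f]=0$ (equivalently $[A_sx,f_j]=0$ for all $j$). Throughout write $[g,f]=\mu f$. A preliminary point is that $g(0)=0$ and $A_s\ne0$ force $\mu(0)=0$: comparing homogeneous parts of degree one in $[g,f]=\mu f$ yields the matrix identity $AB-BA=\mu(0)A$ with $B:=Dg(0)$, so by cyclicity of the trace $\mu(0)\operatorname{tr}(A^{k+1})=\operatorname{tr}\!\big(A^k(AB-BA)\big)=0$ for all $k\ge0$; since $\operatorname{tr}(A^m)=\operatorname{tr}(A_s^m)=\sum_i\lambda_i^m$ (the $\lambda_i$ being the eigenvalues of $A_s$, because $A_n$ is nilpotent and commutes with $A_s$), if $\mu(0)\ne0$ then all power sums of the $\lambda_i$ vanish, forcing (Newton's identities) every $\lambda_i=0$, i.e.\ $A_s=0$, a contradiction. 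This is precisely what produces $\alpha_0=0$ below, as any $\beta$ contributes only terms of positive degree to $X_f(\beta)$.

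For part (a), by \eqref{mixid} one has $[g-\beta f,f]=(\mu+X_f(\beta))\,f$, so it suffices to find $\beta=\beta_1+\beta_2+\cdots$ with $\alpha:=\mu+X_f(\beta)\in\ker X_{A_s}$. I would construct $\beta$ recursively: its degree-$m$ part $\beta_m$ enters $\alpha_m$ only through the summand $X_A(\beta_m)$, the remainder of $\alpha_m$ being a polynomial $v_m\in S_m$ determined by $\mu$ and $\beta_1,\dots,\beta_{m-1}$. By Lemma~\ref{jochlem}(a), $X_A=X_{A_s}+X_{A_n}$ is the Jordan--Chevalley decomposition on $S_m$, so $S_m=\ker X_{A_s}\oplus\operatorname{im}X_{A_s}$ with $\operatorname{im}X_{A_s}\subseteq\operatorname{im}X_A$ (the image of a commuting sum $S+N$ with $S$ semisimple and $N$ nilpotent always contains $\operatorname{im}S$); choosing $\beta_m$ so that $X_A(\beta_m)$ cancels the component of $v_m$ in $\operatorname{im}X_{A_s}$ leaves $\alpha_m\in\ker X_{A_s}$. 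Together with $\alpha_0=\mu(0)=0$ this proves (a). For the displayed consequence: if $A_s$ has only constant formal first integrals it has no zero eigenvalue (a zero eigenvalue would give a nonconstant linear first integral), so $A_s$ and hence $A$ is invertible and every $g\in\mathcal N_0^{\rm for}(f)$ automatically satisfies $g(0)\in\ker A=\{0\}$; moreover $\ker X_{A_s}$ then consists of constants, so the $\alpha$ produced above equals $\alpha_0=0$, i.e.\ $g-\beta f\in\mathcal C_0^{\rm for}(f)$, while $\mathcal C_0^{\rm for}(f)+\{\beta f\}\subseteq\mathcal N_0^{\rm for}(f)$ is immediate from \eqref{mixid}.

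Part (b) is then short. Set $h:=g-\beta f$, so $[h,f]=\alpha f$ with $X_{A_s}(\alpha)=0$. Applying the derivation $L:=[A_sx,\,\cdot\,]$ of the Lie bracket and using $L(f)=0$ (PDNF) together with $L(\alpha f)=X_{A_s}(\alpha)f+\alpha L(f)=0$ (by \eqref{mixid}) gives $[L(h),f]=0$, i.e.\ $L(h)\in\mathcal C_0^{\rm for}(f)$. By Proposition~\ref{lincentprop}, $L(h)\in\mathcal C_0^{\rm for}(A_s)$, i.e.\ $L^2(h)=0$; and since $L=\operatorname{ad}(A_sx)$ is semisimple on every $\mathcal P_m$ (Lemma~\ref{jochlem}(b)), $\ker L^2=\ker L$ there, so $L(h)=0$, which is (b).

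For part (c), write $\sigma=\exp\rho$ with $\rho=\rho_1+\rho_2+\cdots$ (so $\sigma=1+\rho_1+\cdots$ is invertible), and let $C:=Dh(0)$ with semisimple part $C_s$. Using \eqref{mixid} and dividing by the nonzero $\sigma f$, the relation $[h,\sigma f]=\alpha^*\sigma f$ is equivalent to $\alpha^*=\alpha+X_h(\rho)$; the remaining requirements are $X_{A_s}(\rho)=0$, $X_{A_s}(\alpha^*)=0$ and $X_{C_s}(\alpha^*)=0$. From (b) one has $[A_sx,h]=0$, hence $[A_s,C]=0$ and, since $C_s$ is a polynomial in $C$, also $[A_s,C_s]=0$; thus $X_{A_s}$ commutes with both $X_C$ and $X_{C_s}$ on every $S_m$. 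Consequently $K_m:=\ker(X_{A_s}|_{S_m})$ is invariant under $X_C$ and $X_{C_s}$, and on $K_m$ the restriction $X_C|_{K_m}=X_{C_s}|_{K_m}+X_{C_n}|_{K_m}$ is again a Jordan--Chevalley decomposition. In the degree-$m$ identity $\alpha^*_m=X_C(\rho_m)+v_m$ the term $v_m$ depends only on $\alpha$ and $\rho_1,\dots,\rho_{m-1}$ and lies in $K_m$ (using $[A_sx,h_j]=0$ for all $j$ and the inductive hypothesis $\rho_i\in K_i$, via \eqref{bracket}); splitting $K_m=\ker(X_{C_s}|_{K_m})\oplus\operatorname{im}(X_{C_s}|_{K_m})$ with $\operatorname{im}(X_{C_s}|_{K_m})\subseteq\operatorname{im}(X_C|_{K_m})$ and choosing $\rho_m\in K_m$ so that $X_C(\rho_m)$ cancels the component of $v_m$ in $\operatorname{im}(X_{C_s}|_{K_m})$ yields $\alpha^*_m\in\ker X_{A_s}\cap\ker X_{C_s}$, while $\rho_m\in K_m$ for all $m$ gives $X_{A_s}(\rho)=0$ and hence $X_{A_s}(\sigma)=\sigma X_{A_s}(\rho)=0$. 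I expect the only real difficulty here to be organizational: keeping track of the fact that $X_{A_s}$, $X_C$ and $X_{C_s}$ pairwise commute on the homogeneous pieces, so that the nested kernel decompositions used in the recursion are legitimate; once that is in place the individual steps are routine.
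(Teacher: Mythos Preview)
Your proof is correct and follows essentially the same route as the paper: degree-by-degree homological equations solved via the Jordan--Chevalley splittings of $X_A$ on $S_m$ (for part~(a)) and of $X_C$ on $\ker X_{A_s}|_{S_m}$ (for part~(c)), with part~(b) deduced exactly as in the paper from Proposition~\ref{lincentprop} and semisimplicity of $\operatorname{ad}A_s$. The one genuinely different ingredient is your argument for $\mu(0)=0$: the paper passes to block-diagonal coordinates for $A_s$ and reads off $\lambda_0\theta_j=0$ from a trace identity on each block, whereas you use $\operatorname{tr}\bigl(A^k(AB-BA)\bigr)=0$ together with Newton's identities to force all eigenvalues of $A$ to vanish if $\mu(0)\neq0$; your argument is coordinate-free and slightly slicker. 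Your use of $\sigma=\exp\rho$ in part~(c) is cosmetically different from the paper's direct ansatz $\sigma=1+\sigma_1+\cdots$, but the recursion is the same. Your added observation that $g(0)=0$ is automatic when $A_s$ has only constant first integrals (since then $A$ is invertible) cleanly justifies the ``In particular'' clause for all of $\mathcal N_0^{\rm for}(f)$; the paper handles this point separately in the remark following the theorem.
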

\begin{proof} Let $[g,\,f]=\lambda f$, with $g=g_r+\cdots$, $g_r\not=0$, and therefore $\lambda=\lambda_{r-1}+\cdots$.
\begin{enumerate}[$(i)$]
\item We first prove that $A_s\not=0$ implies $\lambda_0=0$. This is trivial when $r>1$. In case $r=1$, hence $g_1=C$ linear, one has $[C,A]=\lambda_0 A$, which implies $[[C,\,A],\,A_s]=0$ and therefore $[C,\,A_s]=0$  by semisimplicity of ${\rm ad}\,A_s|_{\mathcal P_1}$. We may assume that
\[
A_s={\rm diag}\,(\theta_1 I_{n_1},\ldots,\theta_r I_{n_r})
\]
is in block diagonal form, with pairwise different $\theta_j$, and $I_{n_j}$ denoting the $n_j\times n_j$ identity matrix.
Then by Lemma \ref{lincommutelem} in the Appendix one has
\[
A_n={\rm diag}(N_1,\ldots,N_r),\quad C={\rm diag}(C_1,\ldots,C_r)
\]
with nilpotent matrices $N_j$ and matrices $C_j$ of size $n_j\times n_j$.
Now the relation $[C,A]=\lambda_0A$ implies
\[
N_jC_j-C_jN_j=\lambda_0(\theta_jI_{n_j}+N_j),\quad i\leq j\leq r.
\]
The left hand side has trace zero, therefore $\lambda_0\theta_j=0$ for all $j$. From $A_s\not=0$ one now finds $\lambda_0=0$. Application to $g-\beta f$ for any series $\beta$ shows that $\alpha_0=0$ in part (a).
\item {We turn to prove part $(a)$. In case $r=1$ we have already seen that $[g_1,\,A]=0$. For $r>1$ we first show that there exists $\beta_{r-1}\in S_{r-1}$ such that
\[
[g_r-\beta_{r-1}A,\,A]=\alpha_{r-1}A
\]
with some $\alpha_{r-1}$ in the kernel of $X_{A_s}$.
At degree $r$ the normalizer condition yields}
\[
[g_r,A]=\lambda_{r-1} A,
\]
and thus for every homogeneous $\beta_{r-1}$ of degree $r-1$ one gets
\[
[g_r-\beta_{r-1}A,\,A]=(\lambda_{r-1}+X_A(\beta_{r-1}))A.
\]
Since $S_k$ is the sum of the image of $X_A$ and the kernel of $X_{A_s}$ (as a consequence of Lemma \ref{jochlem}), one can choose $\beta_{r-1}$ such that $\alpha_{r-1}:=\lambda_{r-1}+X_A(\beta_{r-1})\in \ker(X_{A_s})$.\\
We proceed by induction on $j$ to show: If $[g,\,f]=\lambda f$, and $X_{A_s}(\lambda_{r-1})=\cdots=X_{A_s}(\lambda_{r-2+j})=0$ then there exists a homogeneous polynomial $\beta_{r-1+j}$ such that
\[
[g-\beta_{r-1+j} f,\,f]=\alpha f \text{  and  } X_{A_s}(\alpha_{r-1})=\cdots =X_{A_s}(\alpha_{r-1+j})=0.
\]
Clearly one has $\alpha_i=\lambda_i$ for $r-1\leq i\leq r-2+j$. Evaluating the normalizer condition at degree $r+j$, one finds
\[
\begin{array}{ll}
&[g_{r+j},A]+[g_{r+j-1},f_2]+\cdots+[g_r,f_{j+1}]\\
&\quad= (\lambda_{r-1+j}+X_{A_s}(\beta_{r-1+j}))A+\alpha_{r-2+j}f_2+\cdots+\alpha_{r-1}f_j,
\end{array}
\]
and as before one may choose $\beta_{r-1+j}$ so that $\lambda_{r-1+j}+X_A(\beta_{r-1+j})\in \ker(X_{A_s})$.
{Now let ${\bf m}:=\left<x_1,\ldots,x_n\right>$ be the maximal ideal of $\mathbb C[[x_1,\ldots,x_n]]$. Since any sequence
\[
g-\sum_{j=0}^k\beta_{r-1+j}f
\]
with homogeneous $\beta_i$ of degree $i$ converges to some formal power series in the ${\bf m}$-adic topology, part $(a)$ is proven.}
\item We turn to part $(b)$. From $[h,\,f]=\alpha f$ and $X_{A_s}(\alpha)=0$ one obtains
\[
\begin{array}{rcl}
0=[A_s,\,\alpha f]&=&[A_s,\,[h,\,f]] \\
                   &=& -[f,\,[A_s,\,h]-[h,\,[f,\,A_s]]\\
                    &=&[[A_s,\,h],\,f]
\end{array}
\]
with the Jacobi identity and $[A_s,\,f]=0$. Now Proposition \ref{lincentprop} shows that $[A_s,\,[A_s,\,h]]=0$, which implies $[A_s,\,h]=0$ with Lemma \ref{jochlem}.
\item For the proof of part $(c)$, we let $h=C+h_2+\cdots$, make the ansatz $\sigma=1+\sum_{j>0}\sigma_j$, with all $X_{A_s}(\sigma_j)=0$, and evaluate
\[
[h,\,\sigma f]=(X_h(\sigma)+\alpha \sigma)f
\]
degree by degree. Recall that $C$ and $A_s$ commute by part $(i)$, hence $X_C$ maps each $S_k^*:=\ker X_{A_s}|_{S_k}$ to itself, and $S_k^*=\ker X_{C_s}|_{S_k^*}\oplus{\rm im}\, X_C|_{S_k^*}$ for every $k$.\\
At degree one we have
\[
X_C(\sigma_1)+\alpha_1=\alpha_1^*
\]
with $\alpha_1\in S_k^*$, thus one may choose $\sigma_1\in S_k^*$ such that $\alpha_1^*\in \ker X_{C_s}|_{S_k^*}$. At degree $j>1$ we have
\[
\begin{array}{rc}
&X_C(\sigma_j)+X_{h_2}(\sigma_{j-1})+\cdots+X_{h_j}(\sigma_1) +\alpha_1\sigma_{j-1}+\cdots +\alpha_{j-1}\sigma_1\\
& \quad =\alpha_1^*\sigma_{j-1}+\cdots +\alpha_{j-1}^*\sigma_1+\alpha_j^*,
\end{array}
\]
and all terms but the first on the left hand side and the last on the right hand side are automatically contained in $S_j^*$. As above, one may choose $\sigma_j\in S_j^*$ such that $\alpha_j^*\in \ker X_{C_s}|_{S_k^*}$.
\end{enumerate}

\end{proof}
\begin{remark}{\em
\begin{enumerate}[$(a)$]
\item The condition $g(0)=0$ does not exclude any interesting cases: When $g(0)\not=0$ then we may assume that $g$ is constant by the straightening theorem, and there remains $\partial f/\partial x_1=\lambda f$. With $\lambda \not=0$ and $f(0)=0$ this implies $f=0$.
\item To motivate the consideration of $\sigma f$ instead of $f$ in part $(c)$, note that $f$ and $\sigma f$ have the same local solution trajectories in the analytic setting; compare Proposition \ref{nonstatprop}(c).
\end{enumerate}
}
\end{remark}
The following example shows that one cannot sharpen the statement $(b)$ of Theorem \ref{linnormthm} in general.
\begin{example}\label{normalizerex}{\em   Let $A=A_s\not=0$ such that there exists a nonconstant homogeneous polynomial $\phi$ with $X_A(\phi)=0$, $\gamma$ a nonconstant series in one variable with $\gamma(0)=1$ and $f=\gamma(\phi)\cdot A$. Moreover let $h(x)=Cx$ be linear such that $C$ and $A$ commute but $X_C(\phi)\not=0$. Then
\[
[h,f]=X_C(\gamma)\gamma^{-1}\cdot f,
\]
hence $h$ lies in the normalizer of $f$ and satisfies the conclusion of part $(b)$, but $h$ does not lie in the centralizer of $f$.\\
For a particular example in dimension two set $A={\rm diag}\,(1,\,-1)$, $\phi=x_1x_2$ and $C=I_2$.
}
\end{example}
\subsection{{PDNF in coordinate version}}
So far we worked with the coordinate-independent characterization of Poincar\'e-Dulac normal forms. In order to discuss the detailed structure of ${\mathcal C}_0^{\rm for}(A_s)$ and to distinguish various cases, we now employ eigencoordinates. Thus, from now on we assume that
\begin{equation}\label{diagmat}
A_s={\rm diag}\,(\lambda_1,\ldots,\lambda_n)\text{   with  } \lambda_1,\ldots,\lambda_n\in\mathbb C,
\end{equation}
with no loss of generality. We recall the coordinate-dependent characterization of PDNF:
\begin{lemma}\label{resonancelem}
Let $A_s$ be given as in \eqref{diagmat}. Then a power series lies in ${\mathcal C}_0^{\rm for}(A_s)$ if and only if it can be written as a series in vector monomials $x_1^{m_1}\cdots x_n^{m_n}e_j$ which satisfy the {\em resonance condition}
\begin{equation}\label{resonance}
\left<m,\lambda\right>-\lambda_j=0.
\end{equation}
\end{lemma}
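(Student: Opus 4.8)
The plan is to compute the adjoint action of $A_s$ on vector monomials explicitly and observe that these monomials form an eigenbasis of $\operatorname{ad} A_s$ acting on each homogeneous component, so that the kernel is precisely the span of the resonant ones. First I would fix a vector monomial $v = x^m e_j$ (with $x^m = x_1^{m_1}\cdots x_n^{m_n}$) and recall the definition of the bracket: since $A_s x = A_s x$ has Jacobian $A_s$, we get
\[
[v,\,A_s](x) = A_s\,v(x) - Dv(x)\,A_s x.
\]
The first term is $\lambda_j x^m e_j$. For the second, $Dv(x) A_s x = X_{A_s}(x^m)\, e_j$, and a one-line computation gives $X_{A_s}(x^m) = D(x^m)\cdot A_s x = \left(\sum_i m_i \lambda_i\right) x^m = \langle m,\lambda\rangle\, x^m$. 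Hence
\[
[x^m e_j,\,A_s] = \bigl(\lambda_j - \langle m,\lambda\rangle\bigr)\, x^m e_j = -\bigl(\langle m,\lambda\rangle - \lambda_j\bigr)\, x^m e_j,
\]
so $x^m e_j$ is an eigenvector of $\operatorname{ad} A_s$ with eigenvalue $-(\langle m,\lambda\rangle - \lambda_j)$.

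Next I would assemble this into the statement. By Lemma \ref{jochlem}(b), $\operatorname{ad} A_s$ preserves each space $\mathcal P_k$ of homogeneous vector fields of degree $k$; the monomials $x^m e_j$ with $|m| = k$ form a basis of $\mathcal P_k$, and by the computation above this is a basis of eigenvectors. A vector in $\mathcal P_k$ therefore lies in $\ker(\operatorname{ad} A_s)$ if and only if it is a linear combination of those basis elements whose eigenvalue vanishes, i.e. those with $\langle m,\lambda\rangle - \lambda_j = 0$, which is exactly \eqref{resonance}. A formal power series $g = \sum_{k} g_k$ with $g_k \in \mathcal P_k$ satisfies $[g,\,A_s] = 0$ (equivalently, by $\mathbb C$-linearity and degree considerations, $[g_k,\,A_s] = 0$ for every $k$) if and only if each $g_k$ is a combination of resonant monomials; collecting these over all $k$ gives precisely the asserted description of ${\mathcal C}_0^{\rm for}(A_s)$. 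One should note that $[g,\,A_s]=0$ and $[A_s,\,g]=0$ are equivalent, so there is no sign issue in passing between the centralizer condition and the eigenvalue condition.

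There is no real obstacle here; the only point requiring a modicum of care is the bookkeeping between the two conventions $[g,\,h] = Dh\,g - Dg\,h$ and the action $X_{A_s}$, and the fact that $\operatorname{ad} A_s$ acts diagonally precisely because $A_s$ is diagonal in the chosen eigencoordinates \eqref{diagmat} — this is what makes the vector monomials eigenvectors rather than merely a basis respecting a filtration. I would also remark that convergence questions do not arise since everything is interpreted in the $\mathbf m$-adic topology, the condition $[g,\,A_s]=0$ being imposed degree by degree.
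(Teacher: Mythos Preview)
Your proof is correct and is the standard argument. The paper does not actually give a proof of this lemma; it is introduced with ``We recall the coordinate-dependent characterization of PDNF'' and stated without proof as a well-known fact, so there is nothing to compare against beyond noting that your computation is exactly the one that underlies the result.
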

Here we employ the familiar abbreviations $\left<m,\lambda\right>=\sum m_i\lambda_i$, and we will also later on use $|m|=\sum m_i$, for any row $(m_1,\ldots,m_n)$ with nonnegative integer entries.\\

For the remainder of the paper we make the following\\

\noindent{\bf Blanket assumptions.}
\begin{itemize}
\item We always let $A_s$ be as in \eqref{diagmat}.
\item For $f=A_s+\cdots$ in PDNF we stipulate that $A_n$ is strictly upper triangular.
\end{itemize}
\begin{remark}{\em
The second blanket condition poses no restriction (since it may always be achieved by a linear transformation) but it ensures that all the $\widetilde f=A_n+\sum f_j$ such that $f=A_s+\widetilde f$ is in  PDNF form a vector space. (This would not be the case otherwise). Given a positive integer $m$, the finite dimensional vector space of all such polynomial vector fields $\widetilde f=A_n+\sum_{i=2}^m f_j$ of degree $\leq m$ is sometimes identified with the space of its coefficients.
}
\end{remark}

For further analysis we introduce a distinction regarding the dimension of the centralizer of $A_s$. We recall from \cite{Wa91}, specifically Corollary 1.7:
\begin{lemma}\label{fireslem}
Let $A_s$ be as in \eqref{diagmat}. The dimension of  $\mathcal {C}^{\rm for}(A_s)$ $($as well as that of $\mathcal {C}(A_s)$$)$ is infinite if and only if:
\begin{equation}\label{fires}
\text{There are integers  } d_1,\ldots,\,d_n\geq 0 \text{  such that  }\left<d,\,\lambda\right>=0 \text{  and  } |d|>0.
\end{equation}
Equivalently, there exist nonnegative integers $d_i$ such that $|d|>0$ and the monomial $\phi:=x_1^{d_1}\cdots x_n^{d_n}$ is a first integral of $\dot x= A_sx$. In turn, this property is equivalent to the existence of some nonconstant formal first integral of $\dot x=A_sx$. {Finally this property is equivalent to the existence of some nonconstant formal first integral of $\dot x=Ax$.}
\end{lemma}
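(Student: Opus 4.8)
The plan is to prove the stated equivalences by a chain of elementary reductions, essentially all of which rest on the single computation $X_{A_s}(x^m)=\langle m,\lambda\rangle\,x^m$ for every monomial $x^m=x_1^{m_1}\cdots x_n^{m_n}$. This identity makes the equivalence of the three conditions ``\eqref{fires} holds'', ``the monomial $\phi=x^d$ with $|d|>0$ and $\langle d,\lambda\rangle=0$ is a first integral of $\dot x=A_sx$'', and ``$\dot x=A_sx$ admits a nonconstant formal first integral'' immediate: the first implies the second at once, the second implies the third trivially, and conversely if $\psi=\sum_m c_mx^m$ is nonconstant with $X_{A_s}(\psi)=\sum_m c_m\langle m,\lambda\rangle x^m=0$, then any $m$ with $|m|>0$ and $c_m\neq 0$ gives $\langle m,\lambda\rangle=0$, i.e.\ an exponent $d=m$ as in \eqref{fires}.

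Next I would handle the infinite-dimensionality of $\mathcal C^{\rm for}(A_s)$. Assuming \eqref{fires}, let $\phi=x^d$ be the corresponding monomial first integral; by Lemma \ref{resonancelem} the coordinate field $w:=x_1e_1$ lies in $\mathcal C^{\rm for}(A_s)$, and \eqref{mixid} together with $X_{A_s}(\phi^k)=0$ yields $[A_sx,\,\phi^kw]=X_{A_s}(\phi^k)\,w+\phi^k[A_sx,\,w]=0$, so all polynomial vector fields $\phi^kw$ $(k\geq0)$ lie in $\mathcal C^{\rm for}(A_s)$. Having pairwise distinct degrees $k|d|+1$, they are linearly independent over $\mathbb C$, hence $\dim\mathcal C^{\rm for}(A_s)=\infty$; since the $\phi^kw$ are polynomials, and in particular convergent, $\dim\mathcal C(A_s)=\infty$ as well. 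For the converse I would argue by contraposition: if \eqref{fires} fails, then by Lemma \ref{resonancelem} the space $\mathcal C^{\rm for}(A_s)$ consists exactly of the formal power series all of whose terms are resonant vector monomials $x^me_j$ (those with $\langle m,\lambda\rangle=\lambda_j$), so it is finite-dimensional over $\mathbb C$ precisely when there are only finitely many such vector monomials. For a fixed index $j$, two resonant exponents $m\geq m'$ with $m\neq m'$ would give $d:=m-m'\geq 0$ with $|d|>0$ and $\langle d,\lambda\rangle=\lambda_j-\lambda_j=0$, contradicting the failure of \eqref{fires}; hence for each $j$ the set of resonant exponents is an antichain in $\mathbb Z_{\geq 0}^n$ and therefore finite by Dickson's lemma. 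Summing over $j=1,\ldots,n$ gives $\dim\mathcal C^{\rm for}(A_s)<\infty$, and a fortiori $\dim\mathcal C(A_s)<\infty$ since $\mathcal C(A_s)\subseteq\mathcal C^{\rm for}(A_s)$.

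It remains to link first integrals of $\dot x=A_sx$ with those of $\dot x=Ax$. Since $X_A$ preserves each $S_k$ and $\ker X_A|_{S_k}\subseteq\ker X_{A_s}|_{S_k}$ (the kernel of an operator being contained in the kernel of its semisimple part), any nonconstant formal first integral of $\dot x=Ax$ has a nonzero homogeneous component $\psi_k$, $k\geq 1$, with $X_{A_s}(\psi_k)=0$, i.e.\ a nonconstant first integral of $\dot x=A_sx$. For the converse, suppose $\ker X_{A_s}|_{S_k}\neq0$ for some $k\geq1$. By Lemma \ref{jochlem}(a) the operators $X_{A_s}|_{S_k}$ and $X_{A_n}|_{S_k}$ commute and are respectively the semisimple and nilpotent parts of $X_A|_{S_k}$; hence $X_{A_n}$ leaves the nonzero subspace $\ker X_{A_s}|_{S_k}$ invariant and acts nilpotently on it, so it has nontrivial kernel there. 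Any nonzero $\eta\in\ker X_{A_s}|_{S_k}\cap\ker X_{A_n}|_{S_k}$ then satisfies $X_A(\eta)=X_{A_s}(\eta)+X_{A_n}(\eta)=0$, i.e.\ is a nonconstant homogeneous first integral of $\dot x=Ax$.

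The only step that is not a routine manipulation is the converse direction for infinite-dimensionality, where Dickson's lemma (finiteness of antichains in $\mathbb Z_{\geq 0}^n$) is needed to pass from ``infinitely many resonant vector monomials'' to the existence of a componentwise-comparable pair, and thus to \eqref{fires}; all the remaining arguments follow directly from the identity $X_{A_s}(x^m)=\langle m,\lambda\rangle x^m$ and from Lemma \ref{jochlem}.
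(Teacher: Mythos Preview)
Your proof is correct and self-contained. Note, however, that the paper does not actually prove this lemma: it is stated as a recollection from \cite{Wa91}, Corollary~1.7, with no argument given. So there is no ``paper's own proof'' to compare against.

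That said, your argument is the natural one and matches in spirit what lies behind the cited corollary. The only point worth flagging is that your use of Dickson's lemma for the converse direction of the infinite-dimensionality claim, while correct, is slightly heavier machinery than strictly necessary: one can also argue directly that if for some fixed $j$ there were infinitely many resonant exponents $m$, then since each such $m$ lies in the affine hyperplane $\langle m,\lambda\rangle=\lambda_j$, infinitely many of them share the same value of some coordinate bounded by a fixed $e_j$-component, and a pigeonhole/subtraction argument produces a nonnegative $d$ with $\langle d,\lambda\rangle=0$. But Dickson's lemma packages this cleanly and is entirely appropriate here (and is indeed mentioned elsewhere in the paper in Remark~\ref{torfinrem}). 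Your treatment of the passage from $A_s$ to $A$ via Lemma~\ref{jochlem} is exactly right and fills in the last clause of the lemma, which the paper leaves implicit.
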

Below we will prove some ``generic'' results for vector fields in Poincar\'e-Dulac normal form, given $A_s$ such that nontrivial vector fields in PDNF exist. We need to define specific notions of genericity.
\begin{definition}\label{nfspecs}
{\em
 We say that a condition for PDNF with given $A_s$ as above {\em holds Z-generically} if there exists a positive integer $m$ such that the condition holds for a Zariski open and dense subset of all polynomial vector fields  $\widetilde f=A_n+\sum_{i=2}^m f_j$  with $f=A_s+\widetilde f$ in PDNF.
}
\end{definition}
For further investigations it seems appropriate to classify normal forms according to properties of the eigenvalues of $A_s$. The following is known from \cite{Wa91}:
\begin{proposition}\label{nofodecomposeprop}
There exists a direct sum decomposition $\mathbb C^n=U\oplus W$ with $A_s$-invariant (possibly trivial) subspaces $U$ and $W$.  The subspace $W$ is by definition the largest $A_s$-invariant subspace on which every polynomial first integral of $\dot x=A_sx$ is constant, and one obtains for $x=u+w\in U \oplus W$ a decomposition
\begin{equation}\label{nofodecompose}
\begin{array}{rcl}
\dot u&=&g(u)\\
\dot w&=&h(u,w).
\end{array}
\end{equation}
\end{proposition}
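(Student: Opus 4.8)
The plan is to construct $W$ explicitly as a span of coordinate subspaces determined by the resonance structure of $A_s$, and then read off the decomposition \eqref{nofodecompose} from Lemma \ref{resonancelem}. First I would recall that, by Lemma \ref{fireslem}, the existence of a nonconstant polynomial first integral of $\dot x = A_s x$ is governed by the existence of nonnegative integer vectors $d$ with $\langle d,\lambda\rangle = 0$ and $|d|>0$; such monomials $x^d$ span (multiplicatively) the algebra of polynomial first integrals. Call an index $i \in \{1,\dots,n\}$ \emph{essential} if the variable $x_i$ occurs (with positive exponent) in at least one such first-integral monomial, and \emph{inessential} otherwise. I would set $U := \mathrm{span}\{e_i : i \text{ essential}\}$ and $W := \mathrm{span}\{e_i : i \text{ inessential}\}$, so that $\mathbb C^n = U \oplus W$ and both summands are $A_s$-invariant (each is a coordinate subspace, hence invariant under the diagonal matrix $A_s$).

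The next step is to verify the defining property of $W$: every polynomial first integral of $\dot x = A_s x|_W$ is constant. A polynomial first integral of the restriction is a polynomial in the $x_i$ with $i$ inessential that is annihilated by $X_{A_s}$; by Lemma \ref{resonancelem} (applied to scalar series, i.e. the case $j$ absent) it is a series in monomials $x^m$ with $\langle m,\lambda\rangle = 0$, supported only on inessential indices. But any such monomial with $|m|>0$ would be a first-integral monomial all of whose variables are inessential, contradicting the definition of essentiality. Hence only constants survive, and $W$ has the asserted maximality: if $W' \supseteq W$ were strictly larger and $A_s$-invariant with only constant first integrals, then $W'$ contains some coordinate direction $e_i$ with $i$ essential — actually here one must be slightly careful, since $A_s$-invariant subspaces need not be coordinate subspaces when eigenvalues coincide. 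I would handle this by noting that $W$ as defined is contained in \emph{every} maximal such subspace, and that among all $A_s$-invariant subspaces on which first integrals are constant there is a largest one (the property is closed under sums, since a first integral on $W_1 + W_2$ restricts to first integrals on each $W_i$, and a polynomial constant on a spanning set of directions... ) — this monotonicity argument is cleaner and gives existence and uniqueness of the largest $W$ directly; the explicit coordinate description then identifies it.

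Finally, for the decomposition \eqref{nofodecompose}: write $f = A_s + \widetilde f$ in PDNF, $x = u + w$ with $u \in U$, $w \in W$. Each component $f_k$ of $f$ along $e_i$ is, by Lemma \ref{resonancelem}, a series in resonant monomials $x^m e_i$ with $\langle m,\lambda\rangle = \lambda_i$. The claim $\dot u = g(u)$ amounts to showing that for $i$ essential, every resonant monomial $x^m$ appearing in $f_i$ involves only essential variables, i.e. $m_j = 0$ for $j$ inessential. Suppose not: then $\langle m, \lambda\rangle = \lambda_i$ with some inessential variable present. Combined with the abundance of relations $\langle d,\lambda\rangle = 0$ supported on essential indices and with $i$ essential (so $\lambda_i = \langle d',\lambda\rangle - \langle d'',\lambda\rangle$ for suitable first-integral-type combinations), one derives a vanishing integer combination forcing an inessential variable into a first-integral monomial — the contradiction. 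The $\dot w = h(u,w)$ line is then automatic since the $w$-components may depend on all variables. I expect the \textbf{main obstacle} to be the last contradiction argument: turning "monomial $x^m$ with an inessential variable is resonant against $\lambda_i$ for essential $i$" into a genuine first-integral relation supported on an inessential variable requires care, because essentiality was defined via \emph{nonnegative} integer vectors, and subtracting resonance relations can introduce negative entries. I would resolve this by working in the additive monoid/group generated by the resonance lattice and tracking which variables can appear with \emph{positive} coefficient in some element of the monoid of first integrals — essentially replaying the structure theory behind Lemma \ref{fireslem} — which is exactly the content imported from \cite{Wa91}, so citing that lemma's proof technique should suffice.
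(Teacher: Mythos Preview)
The paper does not prove this proposition; it simply quotes it from \cite{Wa91}. Your explicit construction of $W$ via the inessential coordinate directions is the right one, and the decomposition argument is essentially correct. Two remarks, one negative and one positive.

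\medskip
\noindent\textbf{The closure-under-sums argument for maximality fails.} Take $A_s=\mathrm{diag}(1,-1)$. Here $\phi=x_1x_2$ generates $I(A_s)$, and $\phi$ vanishes identically on each coordinate axis, so both $\mathbb C e_1$ and $\mathbb C e_2$ are $A_s$-invariant subspaces on which every polynomial first integral is constant; their sum $\mathbb C^2$ is not. Hence the first-integral-constancy property is \emph{not} preserved under sums, and there is no unique largest subspace with that property alone. The phrasing in the proposition is slightly loose: what is actually true is that your $W$ is the largest among those $A_s$-invariant subspaces which satisfy the constancy property \emph{and} admit an invariant complement $U$ for which \eqref{nofodecompose} holds for every PDNF. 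Once you have established the decomposition for your explicit $U,W$, this refined maximality is a short check (any strictly larger $W'$ meeting an essential eigenspace forces, via the resonance argument below, a $w'$-variable into some $\dot u$-equation). So drop the lattice argument and work directly with the explicit $W$.

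\medskip
\noindent\textbf{Your ``main obstacle'' is not an obstacle.} The worry about negative entries dissolves with the right combination. Suppose $i$ is essential, witnessed by $d\in\mathbb Z_{\ge 0}^n$ with $\langle d,\lambda\rangle=0$ and $d_i>0$, and suppose $x^m e_i$ is resonant (so $\langle m-e_i,\lambda\rangle=0$) with $m_j>0$ for some allegedly inessential $j$. Put $d':=d+d_i(m-e_i)$. Then $d'_i=d_i m_i\ge 0$ and $d'_k=d_k+d_i m_k\ge 0$ for $k\ne i$, so $d'\in\mathbb Z_{\ge 0}^n$; moreover $\langle d',\lambda\rangle=0$ and $d'_j\ge d_i m_j>0$. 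Thus $x^{d'}$ is a first-integral monomial involving $x_j$, contradicting inessentiality of $j$. This single computation covers the nonlinear terms and, for $|m|=1$, also shows that essentiality is constant on each eigenspace of $A_s$, which is exactly what you need for $A_n$ to preserve both $U$ and $W$.
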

Moreover, by Lemma \ref{fireslem} the dimension of $\mathcal {C}^{\rm for}(A_s)$ is finite if and only if $\mathbb C^n=W$. This is the first case we will discuss. Later on we will consider the case $W=\{0\}$.
\section{Finite dimensional centralizer of $A_s$}\label{findimsec}
We will obtain a rather clear and complete picture in the case when ${\mathcal C}^{\rm for}(A_s)$ has finite dimension, thus ${\mathcal C}^{\rm for}(f)$ necessarily has finite dimension by Proposition \ref{lincentprop}. We mostly consider the formal setting but will discuss convergence in some instances.

We first note an immediate consequence of Theorem \ref{linnormthm} and Lemma \ref{fireslem}.
\begin{proposition}\label{findimcentnorm}
Whenever $\dim \,{\mathcal C}^{\rm for}(A_s)<\infty$  then
\[
{\mathcal N}^{\rm for}(f)={\mathcal C}^{\rm for}(f)+\left\{\beta f;\,\beta \in \mathbb C[[x_1,\ldots,x_n]]\right\}
\]
for any $f=A+\cdots$ in PDNF.
\end{proposition}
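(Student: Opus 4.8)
The plan is to read the statement off as a direct consequence of Theorem \ref{linnormthm}(a) combined with the characterization of finite-dimensionality in Lemma \ref{fireslem}. The inclusion $\mathcal{C}^{\rm for}(f)+\{\beta f;\,\beta\in\mathbb C[[x_1,\ldots,x_n]]\}\subseteq\mathcal{N}^{\rm for}(f)$ requires no hypothesis at all: for $g_0\in\mathcal{C}^{\rm for}(f)$ and $\beta\in\mathbb C[[x_1,\ldots,x_n]]$ one has, by antisymmetry of the bracket and \eqref{mixid}, $[g_0+\beta f,\,f]=-[f,\,\beta f]=-X_f(\beta)\cdot f$, so $g_0+\beta f\in\mathcal{N}^{\rm for}(f)$. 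Hence the whole content lies in the reverse inclusion.

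For that, first I would record that $\dim\mathcal{C}^{\rm for}(A_s)<\infty$ is, by Lemma \ref{fireslem}, equivalent to the failure of \eqref{fires}, hence to the statement that $\dot x=A_sx$ admits only constant formal first integrals, i.e. that the only $\alpha\in\mathbb C[[x_1,\ldots,x_n]]$ with $X_{A_s}(\alpha)=0$ are the constants. In particular no $\lambda_j$ can vanish (otherwise $d=e_j$ would satisfy \eqref{fires}), so under the blanket assumption $A=A_s+A_n$ is upper triangular with nonzero diagonal and thus invertible. This lets me dispose of the case $g(0)\neq 0$: evaluating $[g,\,f]=\lambda f$ at the origin gives $Ag(0)=\lambda(0)f(0)=0$, whence $g(0)=0$. (Alternatively one may simply invoke the Remark following Theorem \ref{linnormthm}.)

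Now take $g\in\mathcal{N}^{\rm for}(f)$ with $g(0)=0$. Theorem \ref{linnormthm}(a) supplies $\beta\in\mathbb C[[x_1,\ldots,x_n]]$ with $[g-\beta f,\,f]=\alpha f$, where $X_{A_s}(\alpha)=0$ and $\alpha_0=0$. Since $X_{A_s}(\alpha)=0$ forces $\alpha$ to be constant (the finite-dimensionality translated above) and $\alpha_0=0$, we get $\alpha=0$, i.e. $g-\beta f\in\mathcal{C}^{\rm for}(f)$, and therefore $g\in\mathcal{C}^{\rm for}(f)+\{\beta f;\,\beta\in\mathbb C[[x_1,\ldots,x_n]]\}$, as required.

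I do not expect a serious obstacle: all the real work already resides in Theorem \ref{linnormthm} and Lemma \ref{fireslem}, and indeed the ``in particular'' clause of Theorem \ref{linnormthm}(a) is essentially the assertion once ``$A_s$ admits only constant formal first integrals'' is matched with ``$\dim\mathcal{C}^{\rm for}(A_s)<\infty$'' via Lemma \ref{fireslem} (recall also $\mathcal{C}^{\rm for}(f)\subseteq\mathcal{C}^{\rm for}(A_s)$ by Proposition \ref{lincentprop}, so the finiteness of $\dim\mathcal{C}^{\rm for}(f)$ is automatic). The only points needing mild care are the bookkeeping in the $g(0)\neq 0$ case and applying the chain of equivalences in Lemma \ref{fireslem} in the correct direction; neither is difficult.
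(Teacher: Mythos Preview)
Your argument is correct and follows exactly the route the paper intends: the proposition is stated there as an immediate consequence of Theorem~\ref{linnormthm}(a) together with Lemma~\ref{fireslem}, which is precisely what you do. Your only additions are making the trivial inclusion and the $g(0)\neq 0$ case explicit (the latter via invertibility of $A$), which the paper leaves to the Remark after Theorem~\ref{linnormthm}; both are handled correctly.
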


There remains to investigate the centralizer.
 For the sake of a transparent exposition, we take a step-by-step approach to the characterization of centralizers. We dispose in advance of the simplest case, which is characterized by the property that every PDNF is necessarily trivial.
\begin{proposition}\label{simplecase} 
For $f$ in PDNF, with $A_s$ as in \eqref{diagmat}, assume that $\lambda_1,\ldots,\lambda_n$ are pairwise different and there exists no resonance condition \eqref{resonance} with $|m|>1$. Then:
\begin{enumerate}[$(a)$]
\item One has $f=A_s$ and ${\mathcal C}_0^{\rm for}(f)$ {is an abelian Lie algebra of} dimension $n$.
\item The centralizer elements are precisely the linear vector fields given by diagonal matrices.
\end{enumerate}
\end{proposition}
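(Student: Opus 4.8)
The plan is to first pin down $f$ itself (it turns out to be forced to equal $A_s$), and then to read off the centralizer directly from the resonance description in Lemma~\ref{resonancelem}. For the first part: by the blanket assumption $A_n$ is strictly upper triangular, while $[A_s,A_n]=0$ by the Jordan--Chevalley decomposition; since $\lambda_1,\dots,\lambda_n$ are pairwise distinct, any matrix commuting with $A_s={\rm diag}(\lambda_1,\dots,\lambda_n)$ is diagonal, so $A_n=0$ and $A=A_s$. For $j\ge 2$ the PDNF condition gives $[A_s,f_j]=0$, hence by Lemma~\ref{resonancelem} the polynomial $f_j$ is a linear combination of vector monomials $x^m e_k$ with $|m|=j>1$ satisfying \eqref{resonance}; by hypothesis there are none, so $f_j=0$. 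Thus $f=A_s$, and in particular ${\mathcal C}_0^{\rm for}(f)={\mathcal C}_0^{\rm for}(A_s)$.

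Next I would compute ${\mathcal C}_0^{\rm for}(A_s)$ using Lemma~\ref{resonancelem}: it is the set of formal series in those vector monomials $x^m e_k$ which satisfy $\langle m,\lambda\rangle-\lambda_k=0$. The key preliminary observation is that \emph{no} $\lambda_j$ can vanish: if $\lambda_j=0$, then $m=2e_j$ together with the index $k=j$ would satisfy $\langle m,\lambda\rangle-\lambda_j=\lambda_j=0$ with $|m|=2>1$, contradicting the hypothesis. With all $\lambda_j\neq 0$ in hand, I would split by degree: for $|m|=0$ the condition reads $-\lambda_k=0$, impossible; for $|m|=1$, i.e.\ $m=e_i$, it reads $\lambda_i-\lambda_k=0$, forcing $i=k$ by distinctness, which gives exactly the monomials $x_1e_1,\dots,x_ne_n$; and for $|m|\ge 2$ there are no resonant monomials by hypothesis. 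Since the only resonant vector monomials are the $n$ monomials $x_ie_i$, every element of ${\mathcal C}_0^{\rm for}(A_s)$ is in fact a finite linear combination $\sum_{i=1}^n c_i x_i e_i$, i.e.\ a linear vector field with diagonal matrix. This establishes part $(b)$ and the dimension count $\dim{\mathcal C}_0^{\rm for}(f)=n$ in part $(a)$.

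Finally, to see that this centralizer is an abelian Lie algebra, I would use that for linear vector fields $Bx$ and $Cx$ one has $[Bx,\,Cx]=(CB-BC)x$; taking $B$ and $C$ diagonal yields $CB-BC=0$, so the bracket of any two centralizer elements vanishes. (That ${\mathcal C}_0^{\rm for}(f)$ is a Lie subalgebra at all is the usual consequence of the Jacobi identity.) The argument is essentially routine and uses only Lemma~\ref{resonancelem} and the Jordan--Chevalley facts; the single point that must not be overlooked is that the hypothesis ``no resonance with $|m|>1$'' implicitly forbids any zero eigenvalue, which is precisely what excludes the otherwise-present infinite family $x_j^k e_j$ ($k\ge 1$) from the centralizer and keeps the dimension equal to $n$.
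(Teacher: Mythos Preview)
Your proof is correct and follows essentially the same approach as the paper's proof, which is a two-line application of Lemma~\ref{resonancelem}: the absence of resonances with $|m|>1$ forces $f=A_s$, and the only resonant vector monomials are the $x_je_j$. You have simply filled in the details the paper omits, including the useful observation that the hypothesis implicitly forces every $\lambda_j\neq 0$ (via the would-be resonance $m=2e_j$, $k=j$), which is exactly what rules out both constant centralizer elements and the infinite family $x_j^ke_j$.
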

\begin{proof} By Lemma \ref{resonancelem} the PDNF is just $A=A_s$, and the only vector monomials commuting with $A_s$ are the $x_je_j$.
\end{proof}
In the setting of Proposition \ref{simplecase} we will also discuss convergence (which is not automatic even in this case). Before stating the result we remind the reader of a condition for the existence of a convergent normal form transformation.\\

\noindent {\em{\bf Condition Omega} (see Bruno \cite{Br89}):  For every positive integer $k$ let
\[
\omega_k:=\min\left\{|\left<m,\lambda\right>-\lambda_j|;\,|\left<m,\lambda\right>-\lambda_j|\not=0,\, 1\leq j\leq n,\,|m|\leq 2^k\right\}.
\]
We say that {\em Condition Omega} is satisfied if and only if $\sum2^{-k}\log\omega_k$ converges.
}
\begin{remark} {\em
\begin{enumerate}[(a)]
\item In the setting of Proposition \ref{simplecase} Condition Omega is sufficient for the existence of a convergent normalizing transformation.
\item Condition Omega holds in particular when the eigenvalues of $A$ are in a {\em Poincar\'e domain}, thus all $\lambda_i$ are one side of a suitable line through $0$ in the complex plane. In that case the centralizer of $A_s$ is necessarily finite dimensional, and there exists a convergent normalizing transformation even if there are relations $\left<m,\,\lambda\right>=0$ with $|m|>1$.
\end{enumerate}
}
\end{remark}

\begin{proposition}\label{simpleomega}Let $f=A+\cdots$ be a local analytic vector field.
\begin{enumerate}[$(a)$]
\item If  $\lambda_1,\ldots,\lambda_n$  are pairwise different and admit no resonance condition \eqref{resonance} with $|m|>1$, then $\dim\,{\mathcal C}_0(f)\leq n$.
\item{ If in addition the eigenvalues of $A$ satisfy Condition Omega then
\[
{\mathcal N}_0(f) ={\mathcal C}_0(f)+\left\{\beta\cdot f;\,\beta \text{ analytic in }0\right\}.
\]
}
\item If ${\mathcal C}_0(f)$ contains an element of the form
\[
g(x)={\rm diag}\,\left(\mu_1,\ldots,\mu_n\right)\,x+\cdots
\]
such that the $\mu_i$'s are pairwise different, admit no resonance \eqref{resonance} with $|m|\geq 2$, and satisfy Bruno's  Condition $\omega$, then $\dim\,{\mathcal C}_0(f)= n$.
\end{enumerate}
\end{proposition}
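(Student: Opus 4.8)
The common thread is to conjugate $f$ to the diagonal linear vector field $A_s=\operatorname{diag}(\lambda_1,\ldots,\lambda_n)$ and then read the centralizer off Lemma~\ref{resonancelem}; the only real issue is deciding when that conjugation converges, and the three parts correspond to three different amounts of available convergence. For $(a)$ I would use nothing beyond the inclusion $\mathcal C_0(f)\subseteq\mathcal C_0^{\mathrm{for}}(f)$. Since the $\lambda_i$ are pairwise distinct we have $A=A_s$, and $f$ admits a (generally only formal) transformation to a PDNF $\widehat f$; conjugation by a formal diffeomorphism being a $\mathbb C$-linear Lie-algebra isomorphism, we get $\dim\mathcal C_0(f)\le\dim\mathcal C_0^{\mathrm{for}}(f)=\dim\mathcal C_0^{\mathrm{for}}(\widehat f)$. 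The absence of any resonance \eqref{resonance} with $|m|>1$ puts us in the situation of Proposition~\ref{simplecase}, so $\widehat f=A_s$ and $\dim\mathcal C_0^{\mathrm{for}}(A_s)=n$; hence $\dim\mathcal C_0(f)\le n$.

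For $(b)$ one inclusion is immediate, since $\mathcal C_0(f)\subseteq\mathcal N_0(f)$ and $[\beta f,f]=-X_f(\beta)\,f$ (a case of \eqref{mixid}). For the reverse inclusion I would invoke the Remark after Proposition~\ref{simplecase}: under Condition Omega a normalizing transformation may be chosen convergent, so after an analytic coordinate change we may assume $f=A_s$. The nonresonance hypothesis excludes relations $\langle d,\lambda\rangle=0$ with $|d|>0$ (such a relation yields a resonance \eqref{resonance} with $|m|=|d|+1\ge2$), and in particular forces every $\lambda_j\ne0$; hence $\dim\mathcal C^{\mathrm{for}}(A_s)<\infty$ by Lemma~\ref{fireslem}, and Proposition~\ref{findimcentnorm} gives $\mathcal N_0^{\mathrm{for}}(A_s)=\mathcal C_0^{\mathrm{for}}(A_s)+\{\beta A_s;\ \beta\in\mathbb C[[x_1,\ldots,x_n]]\}$. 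Given analytic $g\in\mathcal N_0(A_s)$, decompose $g=c+\beta A_s$ accordingly; by Lemma~\ref{resonancelem} and nonresonance, $\mathcal C_0^{\mathrm{for}}(A_s)=\operatorname{span}_{\mathbb C}\{x_1e_1,\ldots,x_ne_n\}$, so $c$ is a diagonal linear (hence analytic) vector field and $\beta A_s=g-c$ is analytic; since all $\lambda_j\ne0$, analyticity of the component $\beta\lambda_jx_j$ forces $\beta$ to be analytic (divide the convergent series $\beta x_j$ by $x_j$). Transporting back through the conjugacy completes $(b)$.

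For $(c)$ the key observation is that the well-behaved element $g$ forces $f$ itself to be analytically linearizable, which supplies the lower bound complementing $(a)$. Since $Dg(0)=\operatorname{diag}(\mu_1,\ldots,\mu_n)$ commutes with $A=Df(0)$ and the $\mu_i$ are distinct, $A=\operatorname{diag}(\lambda_1,\ldots,\lambda_n)$; and since the $\mu_i$ carry no resonance \eqref{resonance} with $|m|\ge2$ and satisfy Condition $\omega$, the Remark after Proposition~\ref{simplecase}, applied to $g$, provides a convergent $\Psi$ with $\Psi_*g=\operatorname{diag}(\mu_1,\ldots,\mu_n)\,x$. From $[\Psi_*g,\Psi_*f]=0$, Lemma~\ref{resonancelem} shows that $\Psi_*f$ is a series in resonant vector monomials of $\operatorname{diag}(\mu_1,\ldots,\mu_n)$; nonresonance of the $\mu_i$ leaves only $x_1e_1,\ldots,x_ne_n$, so $\Psi_*f=\operatorname{diag}(\lambda_1,\ldots,\lambda_n)\,x$ is linear. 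Its centralizer already contains the $n$ linearly independent fields $x_je_j$, so $\dim\mathcal C_0(f)=\dim\mathcal C_0(\Psi_*f)\ge n$; combined with part $(a)$, whose hypotheses on the $\lambda_j$ are in force here, this gives $\dim\mathcal C_0(f)=n$.

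The step I expect to be the main obstacle is the convergence bookkeeping: in $(a)$ only a formal normalization is available, so the argument must be carried out for $\mathcal C_0^{\mathrm{for}}$ and transferred afterwards; in $(b)$ and $(c)$ the crux is precisely that Condition Omega, respectively Condition $\omega$, promotes the normalizing transformation of $f$, respectively the linearizing transformation of $g$, to a convergent one, and in $(b)$ one must also check that dividing an analytic vector field by a coordinate $x_j$ stays within the convergent category. Everything else reduces to the monomial description of $\mathcal C_0^{\mathrm{for}}(A_s)$ in Lemma~\ref{resonancelem} together with Propositions~\ref{simplecase} and \ref{findimcentnorm}.
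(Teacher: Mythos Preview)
Your argument is correct throughout. Parts $(a)$ and $(c)$ align with the paper: the paper derives $(a)$ from Proposition~\ref{simplecase} just as you do, and for $(c)$ merely cites an external reference (\cite{Wa00b}, Addendum to Theorem~1), whereas you supply the natural argument---linearize $g$ convergently and observe that $f$ is forced to be diagonal linear.

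Part $(b)$ is where your route genuinely diverges. The paper, after reducing to $f=A_s$ via Condition Omega, takes $g\in\mathcal N_0(A_s)$ with $[g,A_s]=\lambda A_s$ and solves the cohomological equation $X_{A_s}(\beta)=\lambda$ directly: it inverts $X_{A_s}$ on each $S_j$ and invokes Condition Omega \emph{a second time} to bound the growth of $\langle m,\lambda\rangle^{-1}$, thereby securing convergence of $\beta$. You instead pull the formal decomposition $g=c+\beta A_s$ from Proposition~\ref{findimcentnorm}, note that nonresonance forces $c$ to be linear (hence analytic), and recover analyticity of $\beta$ by dividing the analytic series $\beta\lambda_j x_j$ by $x_j$. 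Your approach is more structural and avoids the small-divisor estimate entirely; the trade-off is a forward reference to Proposition~\ref{findimcentnorm} (which in the paper sits in the next section, though it depends only on Theorem~\ref{linnormthm} and Lemma~\ref{fireslem}, so there is no circularity). The paper's approach, by contrast, is self-contained at this point and makes the role of Condition Omega in controlling $\beta$ explicit.
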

\begin{proof} Part $(a)$ is a direct consequence of Proposition \ref{simplecase}. We prove  part $(b)$.{ With Condition Omega there exists a convergent transformation of $f$ to normal form, hence we may assume that $f=A=A_s$. Given $g$ such that $[g,\,f]=\lambda f$ for some analytic $\lambda$. Then
\[
[g-\beta A,\,A]=\left(\lambda-X_A(\beta)\right)\,A
\]
and the assertion follows if there exists a convergent power series $\beta$ such that $X_A(\beta)=\lambda$. Writing $\lambda=\sum_{i\geq 1}\lambda_i$, the ansatz $\beta=\sum\beta_i$ yields the necessary and sufficient conditions
\[
X_A(\beta_j)=\lambda_j,\quad j\geq 1.
\]
Now the eigenvalues of $X_A|_{S_j}$ are just the $\left<m,\,\lambda\right>$ with $|m|=j$, and Condition Omega bounds the growth of the $\left<m,\,\lambda\right>^{-1}$, hence the growth of the operator norm of  $(X_A|_{S_j})^{-1}$, with $j$. Therefore $\beta$ has a nonempty domain of convergence if $ \lambda$ has.\\
Part (c) follows from \cite{Wa00b}, Addendum to Theorem 1.}
\end{proof}
{We note an implication for the global centralizer of an analytic vector field.
\begin{corollary}\label{simplecasecor} Let $f$ be analytic on an open and connected $U\subseteq\mathbb C^n$, and assume that  $f$ admits a stationary point $y$ such that the eigenvalues of $Df(y)$ satisfy the hypothesis of Proposition \ref{simplecase}.
 Then ${\mathcal C}_U(f)$ is abelian of dimension $\leq n$.
\end{corollary}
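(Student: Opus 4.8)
The plan is to reduce the global statement to the local one at the distinguished stationary point $y$. First I would recall that restriction induces an injective map ${\mathcal C}_U(f)\hookrightarrow{\mathcal C}_y(f)$ of Lie algebras over $\mathbb C$: any $g\in{\mathcal C}_U(f)$ is in particular analytic at $y$ and satisfies $[g,f]=0$ there, and since $U$ is connected, two global analytic vector fields agreeing on a neighborhood of $y$ agree on all of $U$, so the map is injective and preserves both the bracket and $\mathbb C$-linear structure. Hence it suffices to bound $\dim{\mathcal C}_y(f)$ and to show ${\mathcal C}_y(f)$ is abelian.

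Next I would invoke Proposition \ref{simpleomega}(a): after a formal (not necessarily convergent) change of coordinates we may put $f$ into Poincar\'e--Dulac normal form near $y$, and since the eigenvalues $\lambda_1,\ldots,\lambda_n$ of $Df(y)$ are pairwise distinct with no resonance \eqref{resonance} for $|m|>1$, Proposition \ref{simplecase} applies to the formal normal form. Actually the cleanest route is: the hypothesis of Proposition \ref{simplecase} gives, via that proposition, that the formal centralizer of the normal form is abelian of dimension exactly $n$, and since ${\mathcal C}_y(f)\subseteq{\mathcal C}_0^{\rm for}(f)$ (the latter computed in normal-form coordinates, using that Lie brackets are compatible with the formal coordinate change), we get $\dim{\mathcal C}_y(f)\leq n$ and that ${\mathcal C}_y(f)$ is abelian as a subalgebra of an abelian Lie algebra. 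Combining with the injection from the first paragraph yields $\dim{\mathcal C}_U(f)\leq n$ and ${\mathcal C}_U(f)$ abelian.

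The one point requiring slight care — and the main (mild) obstacle — is the passage ${\mathcal C}_y(f)\subseteq{\mathcal C}_0^{\rm for}(f)$ in the presence of only a \emph{formal} normalizing transformation: one must note that an analytic coordinate change is unnecessary here, because the formal transformation $\Phi$ to PDNF conjugates $f$ to a formal vector field $\widetilde f=A_s+\cdots$ and, Lie brackets being preserved under $\Phi$, sends ${\mathcal C}_y(f)$ injectively into ${\mathcal C}_0^{\rm for}(\widetilde f)$, which by Proposition \ref{simplecase}(a) equals the $n$-dimensional abelian algebra of diagonal linear vector fields. This is exactly the mechanism already used to prove Proposition \ref{simpleomega}(a), so no new work is needed; I would simply cite that proposition together with the injectivity of restriction from $U$ to a neighborhood of $y$.
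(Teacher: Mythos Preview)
Your proposal is correct and follows essentially the same approach as the paper: the corollary is stated without its own proof and relies precisely on the injective restriction ${\mathcal C}_U(f)\hookrightarrow{\mathcal C}_y(f)$ (noted in the introduction) together with Proposition~\ref{simpleomega}(a) for the dimension bound and Proposition~\ref{simplecase}(a) for abelianness of the formal centralizer. Your extra care about the formal normalizing transformation merely makes explicit what the paper's proof of Proposition~\ref{simpleomega}(a) already uses implicitly.
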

}
As a first  step beyond the setting of Proposition \ref{simplecase} we characterize the linear elements in the centralizer of $A$. Ordering possible multiple eigenvalues consecutively, we may assume that
\begin{equation}\label{linsemi}
A_s=\left(\begin{array}{cccc}
\mu_1I_{n_1} &  0             &  0        &   0   \\
 0           & \mu_2I_{n_2}   &  0       &   0   \\
 0           &   0            &  \ddots  &   0  \\
 0           &   0            &  0       &   \mu_\kappa I_{n_\kappa}  \\
\end{array}\right) ,
\end{equation}
where (generally) $I_{r}$ denotes the unit matrix of order $r$, and the $\mu_i$'s are pairwise different. By the blanket assumption we have
\begin{equation}\label{linstuff}
A=\left(\begin{array}{cccc}
\mu_1I_{n_1}+N_1 &  0             &  0        &   0   \\
 0           & \mu_2I_{n_2}+N_2   &  0       &   0   \\
 0           &   0            &  \ddots  &   0  \\
 0           &   0            &  0       &   \mu_\kappa I_{n_\kappa} +N_\kappa \\
\end{array}\right)
\end{equation}
with strict upper triangular matrices $N_i$. Any matrix commuting with $A$ is of block diagonal form with blocks $C_i$ of size $n_i\times n_i$ respectively, and $C_i$ commuting with $N_i$. From Lemma \ref{lincommutelem} in the Appendix we obtain:
\begin{lemma}\label{lincentlem}
Let $d$ be the dimension of the space of all linear vector fields commuting with \eqref{linstuff}. Then
\[
n\leq d\leq n_1^2+\cdots+n_\kappa^2.
\]
If all the $N_i$ have maximal rank then $d=n$; if all $N_i=0$ then $d=n_1^2+\cdots+n_\kappa^2$.
\end{lemma}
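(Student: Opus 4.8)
\emph{Proof idea.} The plan is to reduce everything to the centralizer of a single nilpotent matrix. As recorded in the text just above, a matrix commutes with \eqref{linstuff} if and only if it is block diagonal with diagonal blocks $C_i\in\mathbb C^{n_i\times n_i}$ satisfying $C_iN_i=N_iC_i$; this is exactly the content of Lemma \ref{lincommutelem}, the off-diagonal blocks being forced to vanish because $\mu_iI_{n_i}+N_i$ and $\mu_jI_{n_j}+N_j$ have disjoint spectra for $i\neq j$. Consequently $d=d_1+\cdots+d_\kappa$ with $d_i:=\dim\{C\in\mathbb C^{n_i\times n_i}:\ CN_i=N_iC\}$, and it suffices to establish $n_i\le d_i\le n_i^2$, with $d_i=n_i^2$ precisely when $N_i=0$ and $d_i=n_i$ precisely when $\mathrm{rank}\,N_i=n_i-1$.

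The upper bound is immediate: the centralizer of $N_i$ is a linear subspace of $\mathbb C^{n_i\times n_i}$, so $d_i\le n_i^2$, and equality forces every matrix to commute with $N_i$, i.e.\ $N_i$ scalar, i.e.\ (being nilpotent) $N_i=0$. Summing over $i$ yields the right-hand inequality and the case $d=n_1^2+\cdots+n_\kappa^2$ for $N_1=\cdots=N_\kappa=0$.

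For the lower bound I would invoke the classical fact that the centralizer of an $m\times m$ complex matrix has dimension at least $m$, with equality exactly for nonderogatory matrices (minimal polynomial equal to characteristic polynomial); for a nilpotent matrix this means a single Jordan block, equivalently rank $n_i-1$, which is the maximal rank a nilpotent $n_i\times n_i$ matrix can attain. The cleanest self-contained route is to pass $N_i$ to Jordan form and use the standard formula $d_i=\sum_{j\ge1}(2j-1)\lambda_j$, where $\lambda_1\ge\lambda_2\ge\cdots$ are the sizes of its Jordan blocks: since $\sum_j\lambda_j=n_i$ and each coefficient $2j-1$ is at least $1$, this gives $d_i\ge n_i$, with equality iff there is a single block, i.e.\ $N_i$ has maximal rank. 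Summing, $d=\sum_i d_i\ge n$, with equality iff every $N_i$ has maximal rank.

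There is no genuine obstacle here; the block decomposition is already quoted from the Appendix and the upper bound is trivial. The only point that deserves care is the lower bound $d_i\ge n_i$ together with its equality case, for which the explicit centralizer-dimension formula for nilpotent Jordan types is the most economical tool.
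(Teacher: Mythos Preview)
Your argument is correct and follows the same outline as the paper: reduce via Lemma \ref{lincommutelem}(a) to the block decomposition $d=\sum_i d_i$ with $d_i=\dim\{C:CN_i=N_iC\}$, read off the upper bound trivially, and handle the maximal-rank case via the description of the centralizer of a full nilpotent Jordan block (the paper's Lemma \ref{lincommutelem}(b)). The only place you go beyond the paper is the general lower bound $d_i\ge n_i$: the paper simply cites Lemma \ref{lincommutelem} and records the result, whereas its Appendix only treats the single-block (maximal-rank) case explicitly; you supply the standard Jordan-type formula $d_i=\sum_j(2j-1)\lambda_j\ge\sum_j\lambda_j=n_i$, which also pins down the equality case. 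This extra detail is harmless and, if anything, makes the argument more self-contained.
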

{
We now determine  bounds for the dimension of the centralizer ${\mathcal C}^{\rm for}(f)$ with $f$ in PDNF.
Set
\[
\mathcal R:=\bigcup\limits_{j=1}\limits^n\mathcal R_{j}, \quad \mathcal R_{j}:=\{m\in\mathbb Z_+;\ \ \langle m, \lambda\rangle=\lambda_j, \ |m|\ge 2\},
\]
and let $\mathfrak r$ be the number of elements in $\mathcal R$.
{
\begin{theorem}\label{finitedimT1}
Let $A_s$ be as in \eqref{linsemi}, $d$ as in Lemma \ref{lincentlem}, and $\dim\,{\mathcal C}^{\rm for}(A_s)<\infty$, hence ${\mathfrak r}<\infty$. Then for any vector field
\[
f(x)=Ax+p(x)
\]
in PDNF one has  $d\le\dim\, {\mathcal C}^{\rm for}(f)\le d+\mathfrak r$. In particular $\dim\, {\mathcal C}^{\rm for}(f)\geq n$.
\end{theorem}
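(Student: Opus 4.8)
The plan is to realize $\mathcal C^{\rm for}(f)$ as the kernel of a single linear endomorphism of a finite–dimensional space, and to extract both inequalities from a block–triangular decomposition of that endomorphism relative to the degree grading. Since $f$ is in PDNF, $[A_s,f]=0$, so $f$ lies in $\mathfrak L:=\mathcal C^{\rm for}(A_s)$, and the Jacobi identity together with $[A_s,f]=0$ and $[A_s,g]=0$ for $g\in\mathfrak L$ shows that ${\rm ad}\,f:=[f,\,\cdot\,]$ maps $\mathfrak L$ into itself. By Proposition \ref{lincentprop}, $\mathcal C^{\rm for}(f)=\{g\in\mathfrak L:[f,g]=0\}=\ker({\rm ad}\,f|_{\mathfrak L})$, a subspace of the finite–dimensional space $\mathfrak L$ (in particular $f$ itself is a polynomial vector field). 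By Lemma \ref{fireslem}, $\dim\mathfrak L<\infty$ forces every $\lambda_j\neq 0$, hence $A_s$ is invertible and $\mathfrak L$ has no constant part: writing $\mathfrak L_k:=\mathfrak L\cap\mathcal P_k$ one gets $\mathfrak L=\mathfrak L_1\oplus\mathfrak L^+$ with $\mathfrak L^+:=\bigoplus_{k\ge 2}\mathfrak L_k$. By Lemma \ref{resonancelem} and the block form \eqref{linsemi}, $\dim\mathfrak L_1=n_1^2+\cdots+n_\kappa^2=:D$, while $\dim\mathfrak L^+$ equals the number $\mathfrak r$ of resonant vector monomials $x^me_j$ with $|m|\ge 2$, so $\dim\mathfrak L=D+\mathfrak r$. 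By rank--nullity it therefore suffices to establish $D-d\le{\rm rank}({\rm ad}\,f|_{\mathfrak L})\le D-d+\mathfrak r$.

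The key observation is that ${\rm ad}\,f$ is block lower triangular for the splitting $\mathfrak L=\mathfrak L_1\oplus\mathfrak L^+$. Indeed, for $g\in\mathfrak L^+$ every homogeneous component of $[f,g]$ has degree $\ge 2$, so ${\rm ad}\,f(\mathfrak L^+)\subseteq\mathfrak L^+$; and for $C\in\mathfrak L_1$, since $[A_s,C]=0$, the degree–one part of $[f,C]$ is $[A_n,C]$, so the $\mathfrak L_1$–component of ${\rm ad}\,f|_{\mathfrak L_1}$ equals ${\rm ad}\,A_n|_{\mathfrak L_1}$ (an operator that preserves $\mathfrak L_1$ because $[A_s,A_n]=0$). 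For any block lower–triangular operator $\left(\begin{smallmatrix}X&0\\ Q&B\end{smallmatrix}\right)$ one has ${\rm rank}(X)\le{\rm rank}\le{\rm rank}(X)+\dim\mathfrak L^+$ (project the image onto the first summand to get the lower bound, and note its kernel lies in $\{0\}\times\mathfrak L^+$ for the upper bound). Here $X={\rm ad}\,A_n|_{\mathfrak L_1}$ and, from the commuting semisimple--nilpotent decomposition ${\rm ad}\,A={\rm ad}\,A_s+{\rm ad}\,A_n$ on $\mathcal P_1$ (Lemma \ref{jochlem}), $\ker X=\ker({\rm ad}\,A_s|_{\mathcal P_1})\cap\ker({\rm ad}\,A_n|_{\mathcal P_1})=\ker({\rm ad}\,A|_{\mathcal P_1})$, which is exactly the space of linear vector fields commuting with $A$, of dimension $d$ (Lemma \ref{lincentlem}). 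Hence ${\rm rank}(X)=D-d$, so $D-d\le{\rm rank}({\rm ad}\,f|_{\mathfrak L})\le D-d+\mathfrak r$, and rank--nullity gives $d\le\dim\mathcal C^{\rm for}(f)\le d+\mathfrak r$. Finally $\dim\mathcal C^{\rm for}(f)\ge d\ge n$ by Lemma \ref{lincentlem}.

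The only step going beyond bookkeeping is the identification $\ker({\rm ad}\,A_n|_{\mathfrak L_1})=\{C\text{ linear}:[C,A]=0\}$, that is, that commuting with $A$ is the same as commuting with $A_s$ and with $A_n$ simultaneously; this is the elementary fact $\ker(S+N)=\ker S\cap\ker N$ for commuting semisimple $S$ and nilpotent $N$, applied to ${\rm ad}\,A$ on $\mathcal P_1$. The remaining points --- that $A_s$ is invertible so that $\mathfrak L$ really splits as $\mathfrak L_1\oplus\mathfrak L^+$ and the block form has no feedback $\mathfrak L^+\to\mathfrak L_1$, and the rank estimate for block–triangular operators --- are routine. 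For the lower bound alone one could alternatively invoke the conjugation action of $G=\{P\in GL_n:PA=AP\}$ on the PDNF vector fields with linear part $A$ and estimate the $G$–orbit of $f$, but the block–triangular computation is shorter and yields both inequalities at once.
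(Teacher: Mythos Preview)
Your proof is correct and rests on the same skeleton as the paper's --- both observe that $\mathcal C^{\rm for}(f)$ sits inside the finite-dimensional space $\mathfrak L=\mathcal C^{\rm for}(A_s)$ (Proposition~\ref{lincentprop}), that the degree-one component of the commutator condition forces $[B,A_n]=0$ and hence $[B,A]=0$ (giving the $d$), and that the remaining conditions live in the $\mathfrak r$-dimensional nonlinear piece. The difference is in packaging: the paper first separates off the linear condition $[B,A_n]=0$, then writes out the brackets $[B,p]$, $[q,A_n]$, $[q,p]$ explicitly as linear forms in the $d+\mathfrak r$ unknown coefficients and counts equations against unknowns; you instead treat the whole problem as the single endomorphism ${\rm ad}\,f|_{\mathfrak L}$ and extract both inequalities from its block-lower-triangular shape relative to $\mathfrak L_1\oplus\mathfrak L^+$, together with the identification $\ker({\rm ad}\,A_n|_{\mathfrak L_1})=\ker({\rm ad}\,A|_{\mathcal P_1})$ via Jordan--Chevalley. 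Your route is shorter and more conceptual; the paper's explicit bookkeeping has the compensating advantage of displaying exactly how the coefficients of $p$ enter the system, which feeds directly into the subsequent worked examples and into the refinement in Theorem~\ref{finitedimT2}.
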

\begin{proof} Proposition \ref{simplecase} takes care of the case $\mathfrak r=0$, therefore we may assume that $\mathfrak r>0$. {Moreover we may assume that $A$ is in Jordan canonical form.} Since the vector field is in PDNF, by the definition of $\mathcal R_j$ we have
\begin{equation}\label{paux}
p(x)=\sum\limits_{j=1}\limits^n\left(\sum\limits_{m_j\in\mathcal R_{j}} p_{m_j} x^{m_j}\right) e_j
\end{equation}
with $m_j=(m_{j1},\ldots,\,m_{jn})$, $p_{m_j}\in \mathbb C$ (possibly zero) and $e_j$ denoting the $j$th unit vector.
Let $g\in{\mathcal C}^{\rm for}(f)$, and write $g(x)=Bx+q(x)$, with $B$ linear and $q$ collecting the higher order terms.  By Proposition \ref{lincentprop} we have $[B,A_s]=0$, and
\begin{equation}\label{qaux}
q(x)=\sum\limits_{j=1}\limits^n\left(\sum\limits_{\ell_j\in\mathcal R_{j}} q_{\ell_j} x^{\ell_j}\right) e_j
\end{equation}
is a linear combination of resonant monomials.
Now $[g,\,f]=0$ is equivalent to
\[
[B,A_n]+[B,p]+[q,A_n]+[q,p]=0,
\]
and by separating linear from higher order terms this is equivalent to
\[
[B,A_n]=0 \text{  and  }[B,p]+[q,A_n]+[q,p]=0.
\]
In particular we have $[B,A]=0$, and
with linearly independent $C_1,\ldots,C_d$ that span the space of linear vector fields commuting with $A$ we get ${ B}=\sum { b_i}C_i$ with suitable scalars $b_i$. There remains
\[
[B,p]+[q,A_n]+[q,p]=0.
\]
{
Proceeding, we
let $\rho_i\subseteq\{1,\ldots,n\}$ denote the set of indices for which the corresponding rows of $C_i$ do not vanish, $i=1,\ldots,d$.  Then we have
\[
[B,p]=\sum_{i=1}^n b_i [C_i,p]=\sum_{j=1}^n\left(\sum\limits_{s\in (\rho_i \ni j)}\sum\limits_{m_s\in \mathcal R_{s}} p_{m_s}\sigma_{j,m_s}(b_1,\ldots,b_n) x^{m_s} \right) e_j
\]
by evaluation of the brackets, with linear homogeneous forms $\sigma_{j,m_s}$ (possibly zero) whose coefficients depend only on the entries of the $C_i$ and the exponents $\ell_j$ of $x^{\ell_j}$ appearing in $p(x)$. (Here the notation ${s\in (\rho_i \ni j)}$ means that $s$ runs over all $\rho_i$ that contain $j$.)}

Moreover we have
\[
[q,p]= \sum_{j=1}^n \sum\limits_{m_j\in\mathcal R_{j}}
\left(p_{m_j}q_{\ell_i}-q_{m_j}p_{\ell_i}\right)m_{ji}x^{m_j+\ell_i-e_i}e_j.
\]
{
Finally, by an argument similar to the one used for $[B,p]$  we find
\[
[q,A_n]=\sum_{j=1}^n\left(\sum_{m_k\in{\mathcal R}_j}a_{j,k}\tau_{j,\,m_k}\bigl((q_{m_\ell})_{m_\ell\in\mathcal R}\bigr)x^{m_k}\right)e_j
\]
with $a_{j,k}\in\{0,1\}$ and linear homogeneous forms $\tau_{j,m_k}$ whose coefficients depend only on the exponents $\ell_j$ of $x^{\ell_j}$ that may appear in $q$, which are known. (Recall that the nilpotent part $A_n$ of $A$ is in Jordan canonical form.)}

To summarize, we have a system of homogeneous linear equations for the $b_i$'s and the $q_{m_j}$'s.
Since the number $r$ of the resonant monomials appearing with nonzero coefficient in $[B,p]+[q,A_n]+[q,p]$ is at most equal to $\mathfrak r$, it follows that $[g,f]=0$ if and only if the coefficients $b_{i}$'s and $q_{m_j}$'s of $g$ satisfy $r$ linear homogeneous equations. The coefficient matrix of this linear homogeneous equation system has size $r\times (d+\mathfrak r)$ and if  its rank equals $r^*\le r$, then the system has exactly $d+\mathfrak r-r^*\ge d+\mathfrak r-r\ge d$ linearly independent solutions. These linearly independent solutions correspond to a basis of ${\mathcal C}^{\rm for}(f)$. Consequently  $d+\mathfrak r\geq \mbox{\rm dim}\,{\mathcal C}^{\rm for}(f)=d+\mathfrak r-r^*\ge d$.
\end{proof}
}
}
Note that the upper bound for the dimension can be attained only in the case that all $p_{m_j}=0$, i.e. $p(x)\equiv 0$.
The result for the PDNF implies properties of general vector fields.
\begin{corollary}
Let $h(x) = Bx +\sum_{j\geq 2} h_j(x)\in \mathbb C[[x_1,\ldots,x_n]]^n$ be a formal vector field such that ${\mathcal C}^{\rm for}(B_s)$ is finite dimensional, and $B$ conjugate to $A$ as in \eqref{linstuff}. Then
\[
{\rm dim}\,{\mathcal C}^{\rm for}(h)\geq d\geq n,
\]
with $d$ as in Lemma \ref{lincentlem}.
\end{corollary}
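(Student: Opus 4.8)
The plan is to reduce everything to the normal-form situation already settled in Theorem~\ref{finitedimT1}. First I would perform a linear change of coordinates bringing the linear part $B$ of $h$ into the shape \eqref{linstuff}, i.e.\ replacing $B$ by $A$ with $A_s$ in the block-diagonal form \eqref{linsemi} and each nilpotent block $N_i$ strictly upper triangular. This is always possible, and since Lie brackets transform covariantly under coordinate changes, such a substitution induces a $\mathbb C$-linear isomorphism of centralizers, hence leaves $\dim{\mathcal C}^{\rm for}(h)$ unchanged; it also replaces $B_s$ by a conjugate matrix, so by Lemma~\ref{fireslem} (which depends only on the eigenvalues) the hypothesis that ${\mathcal C}^{\rm for}(B_s)$ is finite dimensional passes to ${\mathcal C}^{\rm for}(A_s)$.

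Next I would invoke the existence of a formal power series transformation, tangent to the identity, carrying $h$ into a vector field $f = A + p$ in Poincar\'e--Dulac normal form (see e.g.\ \cite{Br89}). Again by covariance of the Lie bracket this transformation yields a $\mathbb C$-linear isomorphism between ${\mathcal C}^{\rm for}(h)$ and ${\mathcal C}^{\rm for}(f)$, so that $\dim{\mathcal C}^{\rm for}(h) = \dim{\mathcal C}^{\rm for}(f)$. Because the transformation fixes the linear part, $f$ has linear part exactly $A$ as in \eqref{linstuff}, and ${\mathcal C}^{\rm for}(A_s)$ is finite dimensional. Hence Theorem~\ref{finitedimT1} applies directly and gives $\dim{\mathcal C}^{\rm for}(f) \ge d \ge n$, where $d$ is the dimension of the space of linear vector fields commuting with $A$, as in Lemma~\ref{lincentlem}.

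It remains to observe that the number $d$ is a similarity invariant: it is determined by the Jordan data of the linear part (the block sizes $n_i$ together with the ranks of the $N_i$), which are preserved by the linear change used in the first step. Consequently the $d$ appearing in the statement — read off from $B$ — coincides with the $d$ produced by Theorem~\ref{finitedimT1} for $A$. Chaining $\dim{\mathcal C}^{\rm for}(h) = \dim{\mathcal C}^{\rm for}(f) \ge d \ge n$ then gives the claim.

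I do not expect a genuine obstacle here; the only point requiring care is the bookkeeping that the formal normalizing substitution induces a true $\mathbb C$-linear \emph{isomorphism} of centralizers (not merely an inclusion, as in the analytic-to-formal comparison), and that the invariant $d$ transported from $B$ to its normal form is the same number. Both follow from the covariance of $[\,\cdot\,,\,\cdot\,]$ under invertible formal substitutions together with the fact that $d$ depends only on the conjugacy class of the linear part.
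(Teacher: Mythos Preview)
Your argument is correct and matches the paper's approach: the paper states the corollary immediately after Theorem~\ref{finitedimT1} with only the sentence ``The result for the PDNF implies properties of general vector fields,'' leaving exactly your reduction (linear conjugation of $B$ to $A$, then formal normalization to PDNF, then Theorem~\ref{finitedimT1}) to the reader. Your remarks on covariance of the bracket and on $d$ being a similarity invariant fill in the details the paper omits.
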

\begin{remark}{\em
Since there exist nontrivial lower bounds for $\dim\, {\mathcal C}^{\rm for}(f)$ which depend only on the space dimension $n$, it is natural to ask about upper bounds. The following example shows that there cannot exist an upper bound depending only on $n$ whenever $n>2$.
}
\end{remark}

\begin{example}\label{eg2}{\em
\begin{enumerate}[$(a)$]
\item We first consider an example in dimension three. Let $q$ be a positive integer and set
\[
A=A_s={\rm diag}\,\left(12q,\,3,\,2\right).
\]
Evaluating the resonance conditions \eqref{resonance} one has $12 q m_1+3 m_2+2m_3=2$ for the third entry, with the only  solution $(0,0,1)$, and similarly $12 q m_1+3 m_2+2m_3=3$ for the second entry, with the only  solution $(0,1,0)$. Nontrivial solutions appear only in the resonance condition for the first entry, which is $12q m_1+3m_2+2m_3=12q$, and they are of the form
\[
\left(0,\,4q-2k,\,3k\right),\quad 0\leq k\leq 2q.
\]
Therefore every PDNF has the form
\begin{equation}\label{eg2e1}
f=A_s+\sum_{k=0}^{2q}\alpha_kp_k,\quad \text{with  }p_k(x)=\begin{pmatrix}x_2^{4q-2k}x_3^{3k}\\0\\0\end{pmatrix}.
\end{equation}
The $p_i$'s commute with $A_s$ by construction, and they commute pairwise since the first column of the Jacobian of any $p_i$ is zero, hence $Dp_i(x)p_j(x)=0$ for all $i$ and $j$. Obviously $A_s$ and the $p_i$ form a linearly independent system, and they are all contained in the formal centralizer of the vector field \eqref{eg2e1}. This shows $\dim\,{\mathcal C}^{\rm for}(f)\geq 2q+2$.\\
Note that $f$ admits  the meromorphic first integral $\rho:=x_2^2/x_3^3$.
\item In dimension $n>3$ let $\mu_1,\ldots,\mu_{n-3}$ be such that $1, \,\mu_1,\ldots,\mu_{n-3}$ are linearly independent over the rationals $\mathbb Q$, and set
\[
A=A_s={\rm diag}\,\left(12q,\,3,\,2,\,\mu_1,\,\ldots,\mu_{n-3}\right).
\]
Then, slightly modifying the arguments above, one obtains the same estimate $\dim\,{\mathcal C}^{\rm for}(f)\geq 2q+2$, since no further nontrivial resonance conditions appear.
\end{enumerate}
}
\end{example}
The next example illustrates that every centralizer dimension between the lower and upper estimate in Theorem \ref{finitedimT1} may be attained.
{
\begin{example}\label{eg3}{\em
We specialize Example \ref{eg2}~$(a)$ by setting
\[
A_s={\rm diag}\,\left(12,\,6,\,3\right).
\]
Then every PDNF has the form
\begin{equation}\label{eg3e1}
f=A_sx+p(x), \quad p(x)=\alpha_1 p_1+\alpha_2p_2+\alpha_3p_3+\alpha_4p_4,
\end{equation}
with
\[
p_1(x)= x_2^2 e_1,\quad
p_2(x)= x_2 x_3^2 e_1,\quad
p_3(x)= x_3^4 e_1,\quad
p_4(x)= x_3^2 e_2.
\]
This system has always the elementary first integral $H=x_2x_3^{-2}-\dfrac{\alpha_4}{3}\log x_3$.

For $g=B+q(x)\in{\mathcal C}^{\rm for}(f)$, then $B=\mbox{\rm diag}(b_{11},b_{22},b_{33})$ and
\[
q(x)=\beta_1x_2^2 e_1 +\beta_2 x_2 x_3^2 e_1 + \beta_3 x_3^4 e_1 +\beta_4  x_3^2 e_2.
\]
We now need to determine the seven coefficients $c=(b_{11}, b_{22}, b_{33}, \beta_1, \beta_2, \beta_3, \beta_4)$ such that
$[g,f]=0$, which is equivalent to $[B,p]+[q,p]=0$. Since
\begin{align*}
[B,p]&=\left(\begin{array}{c}
(2b_{22}-b_{11}) \alpha_1 x_2^2+(b_{22}+2b_{33}-b_{11})\alpha_2 x_2x_3^2+(4b_{33}-b_{11})\alpha_3 x_3^4\\
(2b_{33}-b_{22})\alpha_4 x_3^2\\
0
\end{array}\right),\\
[q,p]&=\left(\begin{array}{c}
2( \alpha_1\beta_4-\alpha_4\beta_1)x_2x_3^2+(\alpha_2\beta_4-\alpha_4\beta_2)x_3^4\\
0\\
0
\end{array}\right),
\end{align*}
one gets
\begin{equation}\label{eg3e}
\left(\begin{array}{ccccccc}
-\alpha_1  &  2 \alpha_1   &  0  &   0  &  0  &  0  &  0 \\
-\alpha_2 &   \alpha_2   &  2\alpha_2   & -2\alpha_4  &  0   &  0   &  2 \alpha_1 \\
-\alpha_3  &  0           &  4\alpha_3  &   0         & -\alpha_4  &  0  & \alpha_2\\
0         &  -\alpha_4  &  2 \alpha_4   &  0   &  0  &  0  &  0
\end{array}\right)
c^\tau
=0.
\end{equation}
The coefficient matrix of equation \eqref{eg3e} has rank $4$ generically, and also rank $3$ or $2$ or $1$ or $0$ for suitable choice of the entries  in this matrix. In fact, by solving equations \eqref{eg3e} in the $b_{ii}$'s and $\beta_j$'s we can reach the next conclusions:
\begin{itemize}
\item If $\alpha_1,\ \alpha_4\ne 0$, then $b_{11}=2b_{22}, \ b_{33}=\dfrac 12 b_{22}$, $\beta_1=\dfrac{\alpha_1}{\alpha_4}\beta_4,\ \beta_2=\dfrac{\alpha_2}{\alpha_4}\beta_4$, and consequently $\mbox{\rm dim}\,{\mathcal C}^{\rm for}(f)=3$.
\item If $\alpha_1=0, \ \alpha_4\ne 0$, then $b_{22}=2b_{33}$, $\beta_1=\dfrac{\alpha_2}{2\alpha_4}(4b_{33}-b_{11}),\ \beta_2=\dfrac{\alpha_3}{\alpha_4}(4b_{33}-b_{11})+\dfrac{\alpha_2}{\alpha_4}\beta_4$, and consequently $\mbox{\rm dim}\,{\mathcal C}^{\rm for}(f)=4$.
\item If $\alpha_1\ne 0, \ \alpha_4= 0$, then $b_{11}=2b_{22}$. Furthermore
\begin{itemize}
 \item if $2\alpha_1\alpha_3-\alpha_2^2\ne 0$, then $b_{33}=\dfrac 12 b_{22}$ and $\beta_4=0$, and consequently $\mbox{\rm dim}\,{\mathcal C}^{\rm for}(f)=4$;
\item  if $2\alpha_1\alpha_3-\alpha_2^2= 0$, then  $\beta_4=-\dfrac{\alpha_2}{2\alpha_1}(2b_{33}-b_{22})$, and consequently $\mbox{\rm dim}\,{\mathcal C}^{\rm for}(f)=5$.
 \end{itemize}
\item For $\alpha_1= 0, \ \alpha_4= 0$,
\begin{itemize}
 \item if $\alpha_2 \ne 0$, then $b_{11}=  b_{22}+2b_{33}$ and $\beta_4=-\dfrac{\alpha_3}{\alpha_2}(2b_{33}-b_{22})$, and consequently ${\rm dim}\,{\mathcal C}^{\rm for}(f)=5$;
 \item if $\alpha_2 = 0$ and $\alpha_3\ne 0$ then $b_{11}= 4b_{33}$, and consequently ${\rm dim}\,{\mathcal C}^{\rm for}(f)=6$;
\item if $\alpha_2 = 0$ and $\alpha_3= 0$, the dimension is the maximum one, i.e. ${\rm dim}\,{\mathcal C}^{\rm for}(f)=7$.
      \end{itemize}
\end{itemize}
}
\end{example}

Next we give an improved estimate for the case when $A=A_s$ as given by \eqref{linsemi} has multiple eigenvalues. Before stating the result, we introduce the notation
\[
{\mathcal R}^*_k:={\mathcal R}_j \text{ when  }\mu_k=\lambda_j, \, {\mathfrak r^*}_k:=\#{\mathcal R}^*_k,\quad1\leq k\leq \kappa.
\]
Recall that all ${\mathfrak r^*_k}$ are finite since ${\mathcal C}^{\rm for}(A_s)$ is finite dimensional.

\begin{theorem}\label{finitedimT2} Let $A=A_s$ be given by \eqref{linsemi}, and $f=A_s+\cdots$ in PDNF.
 Then
\[
n_1^2+\ldots+n_{\kappa}^2\le \mbox{\rm dim}\,{\mathcal C}^{\rm for}(f)\le n_1(n_1+\mathfrak r_1^*)+\ldots+n_\kappa(n_\kappa+ \mathfrak r_\kappa^*).
\]
\end{theorem}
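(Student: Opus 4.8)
The plan is to refine the dimension count used in the proof of Theorem \ref{finitedimT1}, exploiting the fact that here $A=A_s$ has trivial nilpotent part. Because of this, the linear part of any centralizer element is constrained only by commutativity with $A_s$, so the space of admissible linear parts already attains the maximal dimension $n_1^2+\cdots+n_\kappa^2$ (Lemma \ref{lincentlem}), and no further linear constraint (of the type ``$[B,A_n]=0$'') enters. The whole problem then becomes a single finite-dimensional linear-algebra count, and the two bounds fall out of it.

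First I would settle the upper bound, which is immediate. By Proposition \ref{lincentprop} one has $\mathcal C^{\rm for}(f)\subseteq\mathcal C^{\rm for}(A_s)$, so it suffices to compute $\dim\mathcal C^{\rm for}(A_s)$. By Lemma \ref{resonancelem} a $\mathbb C$-basis of this space consists of the resonant vector monomials $x^m e_j$ with $\langle m,\lambda\rangle=\lambda_j$. Fixing a coordinate $j$ lying in the $k$-th block, so $\lambda_j=\mu_k$, these split into the $n_k$ linear ones $x_i e_j$ with $\lambda_i=\mu_k$, and the $\mathfrak r_k^*$ nonlinear ones $x^m e_j$ with $m\in\mathcal R_k^*$. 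Since the $k$-th block comprises $n_k$ such coordinates, it contributes $n_k(n_k+\mathfrak r_k^*)$ basis vectors, and summing over $k$ yields $\dim\mathcal C^{\rm for}(A_s)=\sum_{k=1}^\kappa n_k(n_k+\mathfrak r_k^*)$, the claimed upper bound.

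For the lower bound, I would take $g\in\mathcal C^{\rm for}(f)$ and write $g(x)=Bx+q(x)$ with $B\in\mathbb C^{n\times n}$ and $q$ of order $\geq 2$. By Proposition \ref{lincentprop} we have $g\in\mathcal C^{\rm for}(A_s)$, hence $B$ is block diagonal with blocks of sizes $n_1,\ldots,n_\kappa$, while $q$ is a sum of resonant monomials of order $\geq 2$ (a finite sum, since $\dim\mathcal C^{\rm for}(A_s)<\infty$); in particular $[q,A_s x]=0$. Since $f(x)=A_s x+p(x)$, this gives
\[
[g,f]=[B,p]+[q,p],
\]
so $g\in\mathcal C^{\rm for}(f)$ if and only if $[B,p]+[q,p]=0$. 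The assignment $(B,q)\mapsto[B,p]+[q,p]$ is $\mathbb C$-linear; it is defined on the space of admissible pairs $(B,q)$, which by the previous paragraph has dimension $\sum_k n_k^2+\sum_k n_k\mathfrak r_k^*$, and a short computation with the Jacobi identity (using $[A_s,p]=0$ and that $\mathcal C^{\rm for}(A_s)$ is a Lie subalgebra) shows that its image lies in the space of resonant vector fields of order $\geq 2$, of dimension $\sum_k n_k\mathfrak r_k^*$. Thus $\mathcal C^{\rm for}(f)$ is the kernel of a linear map between spaces of these dimensions, whence
\[
\dim\mathcal C^{\rm for}(f)\ \geq\ \Bigl(\sum_k n_k^2+\sum_k n_k\mathfrak r_k^*\Bigr)-\sum_k n_k\mathfrak r_k^*\ =\ \sum_{k=1}^\kappa n_k^2.
\]

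I do not expect a substantive obstacle here: once the correct finite-dimensional spaces are isolated, the argument is pure bookkeeping. The one point that must be handled carefully — and it is exactly the point already underlying the proof of Theorem \ref{finitedimT1} — is the block-by-block count of resonant monomials: the ambient space of candidates $(B,q)$ and the obstruction space share precisely the summand $\sum_k n_k\mathfrak r_k^*$, namely the nonlinear resonant vector monomials counted with their multiplicity across the $n_k$ coordinates of each block, so that this contribution cancels and leaves exactly the linear-centralizer dimension $\sum_k n_k^2$ as the lower bound.
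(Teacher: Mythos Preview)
Your proof is correct and follows essentially the same approach as the paper: both reduce the centralizer condition to $[B,p]+[q,p]=0$ (using $A=A_s$ so that $[q,A_n]$ is absent), and then apply a rank--nullity count on the linear map $(B,q)\mapsto[B,p]+[q,p]$ from a space of dimension $\sum_k n_k^2+\sum_k n_k\mathfrak r_k^*$ into the space of nonlinear resonant vector fields of dimension $\sum_k n_k\mathfrak r_k^*$. Your upper bound argument via $\mathcal C^{\rm for}(f)\subseteq\mathcal C^{\rm for}(A_s)$ is perhaps slightly more explicit than the paper's, which simply notes the bound is attained when $p\equiv 0$, but the content is the same.
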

\begin{proof}
The arguments are similar to those in the proof of Theorem \ref{finitedimT1}; here we indicate only the differing parts. Under our assumption the total number of nonlinear resonant monomials equals $\mathfrak E_r:= n_1 \mathfrak r_1^* +\ldots+n_\kappa \mathfrak r_\kappa^*$. We also define $\mathfrak L_c:=n_1^2+\ldots+n_\kappa^2$.

For $g(x)=Bx+q(x)\in {\mathcal C}^{\rm for}(f)$ with $q(x)$ a linear combination of nonlinear resonant monomials, Lemma \ref{lincommutelem} shows that
\[
B=\mbox{\rm diag}(B_1,\ldots,B_\kappa)
\]
with $B_j$ being an arbitrary $n_j\times n_j$ matrix, $j=1,\ldots,\kappa$. Furthermore, $[g,f]=0$ if and only if $[B,p]+[q,p]=0$. The latter is equivalent to a system of at most $\mathfrak E_r$ linear homogeneous equations for $\mathfrak E_r+\mathfrak L_c$ unknowns, which are the entries of the $B_j$'s and the coefficients of the monomials appearing in $q(x)$. By construction, the coefficients of this system of linear homogeneous equations are linear homogeneous functions in the coefficients of $p(x)$. So if $p(x)\equiv 0$ then $\mbox{\rm dim}\,{\mathcal C}^{\rm for}(f)$ attains the maximal possible value $\mathfrak E_r+\mathfrak L_c$. The lower estimate was already shown in Theorem \ref{finitedimT1}. This completes the proof of the theorem.
\end{proof}

We present an application of Theorem \ref{finitedimT2}.
\begin{example} \label{eg4}{\em
{\em Let $A_s=\mbox{\rm diag}(12,12,6,6,6,3)$. The nonlinear term of a vector field $f=A_sx+p(x)$ in PDNF is of the form
\[
p(x)=\left(
p_1(x), \
p_2(x), \
p_{31} x_6^2, \
p_{41} x_6^2, \
p_{51} x_6^2,\
0
\right)^\tau
\]
with $p_j(x)=p_{j1}x_3^2+p_{j2}x_4^2+p_{j3}x_5^2+p_{j4}x_3x_4+p_{j5}x_3x_5+p_{j6}x_4x_5+p_{j7}x_3x_6^2+p_{j8}x_4x_6^2+p_{j9}x_5x_6^2+p_{j10} x_6^4$, $j=1,2$, thus we have $23$ resonant monomials. For $g(x)=Bx+q(x)\in {\mathcal C}^{\rm for}(f)$, with
\[
B=\mbox{\rm diag}\,(B_1,B_2,b_{66}),\quad
B_1=\left(\begin{array}{cc}
b_{11} & b_{12}\\
b_{21} & b_{22}
\end{array}\right), \ \
B_2=\left(\begin{array}{ccc}
b_{33} & b_{34}  & b_{35}\\
b_{43} & b_{44}  & b_{45}\\
b_{53} & b_{54}  & b_{55}
\end{array}\right),
\]
and $q(x)=\left(q_1(x),\
q_2(x),\
q_{31} x_6^2,\
q_{41} x_6^2,  \
q_{51} x_6^2,\
0
\right)^{\tau}$ with $q_i$ being a linear combination of the same monomials as $p$ with coefficients $q_{jk}$ instead of $p_{jk}$, $j=1,\,2$.

Now $0=[g,f]=[B+q,p]$ is equivalent to the vanishing of all coefficients of the $23$ resonant monomials in $[B+q,p]$. This yields a  system of $23$ linear homogeneous equations $($whose expressions are quite bulky and are omitted here$)$ for $37$ unknowns. By Theorem \ref{finitedimT2}, in this case $14\le \mbox{\rm dim}\,{\mathcal C}^{\rm for}(f)\le 37$.\qed
}
}
\end{example}
}
{Combining the arguments in the proofs of the last two theorems, we arrive at the following improvement of Theorem \ref{finitedimT1}. We do not carry out the details of the proof.

\begin{corollary}\label{finitedimcor} Let $A$ be given by \eqref{linstuff}, $A$ not diagonal, such that all $N_i$ have maximal rank. Then
\[
 d \le \mbox{\rm dim}\,{\mathcal C}^{\rm for}(f)\le n+ n_1 \mathfrak r_1 +\ldots+n_\kappa \mathfrak r_\kappa.
\]
\end{corollary}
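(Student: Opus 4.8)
The plan is to rerun the argument of Theorem~\ref{finitedimT1}, but feeding in the explicit value of $d$ supplied by Lemma~\ref{lincentlem} under the present hypotheses and the multiplicity-sensitive count of resonant monomials already used in the proof of Theorem~\ref{finitedimT2}. As there, I would use that $\dim {\mathcal C}^{\rm for}(A_s)<\infty$, so every $\mathfrak r_k$ is finite, and (after a block-preserving linear change of coordinates, harmless for all quantities in sight) that $A$ is in Jordan canonical form; since $A$ is not diagonal and all $N_i$ have maximal rank, each $N_i$ is a single nilpotent Jordan block, at least one of size $\ge 2$.

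Write $g=Bx+q(x)\in {\mathcal C}^{\rm for}(f)$ with $B$ linear and $q$ of order $\ge 2$. By Proposition~\ref{lincentprop} we have $g\in {\mathcal C}^{\rm for}(A_s)$, so $[B,A_s]=0$ and $q$ is a formal linear combination of resonant vector monomials $x^{m}e_j$ with $m\in {\mathcal R}_j$, $|m|\ge 2$; comparing degree-one terms in $[g,f]=0$ gives $[B,A]=0$, so $B$ lies in the space of linear vector fields commuting with \eqref{linstuff}, which by Lemma~\ref{lincentlem} has dimension $d=n$ because all $N_i$ have maximal rank. For each coordinate $j$ in the $k$-th eigenvalue block one has ${\mathcal R}_j={\mathcal R}^*_k$, of cardinality $\mathfrak r_k$; summing over the $n_k$ coordinates in each of the $\kappa$ blocks, the space of resonant vector monomials of order $\ge 2$ has dimension $n_1\mathfrak r_1+\cdots+n_\kappa\mathfrak r_\kappa$. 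Hence the coefficient vector of $g$ ranges over a space of dimension $n+(n_1\mathfrak r_1+\cdots+n_\kappa\mathfrak r_\kappa)$, which already gives the stated upper bound; the lower bound $d\le\dim {\mathcal C}^{\rm for}(f)$ is exactly Theorem~\ref{finitedimT1}. If one wants the intermediate description: separating linear from higher-order terms, $[g,f]=0$ is equivalent to $[B,A]=0$ together with the homogeneous linear system $[B,p]+[q,A_n]+[q,p]=0$ in the remaining coefficients, whose solution space is ${\mathcal C}^{\rm for}(f)$.

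There is no real difficulty here beyond bookkeeping. The points to watch are: the system $[B,p]+[q,A_n]+[q,p]=0$ is genuinely homogeneous linear in the unknowns $(B,q)$, since $p$ is fixed (it is part of $f$, not of $g$); its components involve only resonant monomials, because any bracket of elements of ${\mathcal C}^{\rm for}(A_s)$ again lies in ${\mathcal C}^{\rm for}(A_s)$ by the Jacobi-identity argument of Proposition~\ref{lincentprop}; and the multiplicity factors $n_k$ must be correctly accounted for when several coordinates share an eigenvalue. For a complete proof one simply transcribes the relevant parts of the proofs of Theorems~\ref{finitedimT1} and~\ref{finitedimT2}, with $d$ replaced by $n$ and $\mathfrak r$ by $n_1\mathfrak r_1+\cdots+n_\kappa\mathfrak r_\kappa$.
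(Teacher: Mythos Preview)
Your proposal is correct and follows exactly the route the paper indicates: combine the linear-part dimension count $d=n$ from Lemma~\ref{lincentlem} (valid since all $N_i$ have maximal rank) with the multiplicity-weighted count $n_1\mathfrak r_1+\cdots+n_\kappa\mathfrak r_\kappa$ of nonlinear resonant monomials from the proof of Theorem~\ref{finitedimT2}, and invoke Theorem~\ref{finitedimT1} for the lower bound. The paper itself does not give a detailed proof here, merely stating that one combines the arguments of the two preceding theorems; your write-up fills in precisely those details.
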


\begin{remark}{\em
Note that $[q,A]=[q,A_n]$  is in general not zero for nonlinear $q$ in the centralizer of $A_s$. Therefore,  even when $p(x)\equiv 0$, the space of the elements  in the centralizer with order greater than one has dimension less than $n_1 \mathfrak r_1 +\ldots+n_\kappa \mathfrak r_\kappa$, since $ [q,A_n]=0$  appears as an additional condition. In the next example, we illustrate that the lower bound in Theorem \ref{finitedimT1} can be reached, $[q,A_n]\ne 0$, and the optimal upper bound is less than that given in Corollary \ref{finitedimcor}.
}\end{remark}

\begin{example}\label{eg4} {\em Consider $f=A+p$ in PDNF with
\[
A=\left(\begin{array}{cccccc}
3 &  1  & 0  &  0  &   0   &  0\\
0 &  3  & 1  &  0  &   0   &  0\\
0 &  0  & 3  &  0  &   0   &  0\\
0 &  0  & 0  &  2  &   1   &  0\\
0 &  0  & 0  &  0  &   2   &  0\\
0 &  0  & 0  &  0  &   0   &  1
\end{array}\right),
\]
then $p(x)$ is in general of the form
\[
p(x)=\left(\begin{array}{c}
a_1x_4x_6+a_2 x_5x_6+a_3 x_6^3\\
a_4x_4x_6+a_5 x_5x_6+a_6 x_6^3\\
a_7x_4x_6+a_8 x_5x_6+a_9 x_6^3\\
a_{10} x_6^2\\
a_{11} x_6^2\\
0
\end{array}\right).
\]
For $g=B+q\in {\mathcal C}^{\rm for}(f)$, then one has
\[
B=\left(\begin{array}{cccccc}
\alpha_1 &  \alpha_2  & \alpha_3  &  0  &   0   &  0\\
0 &  \alpha_1  & \alpha_2  &  0  &   0   &  0\\
0 &  0  & \alpha_1  &  0  &   0   &  0\\
0 &  0  & 0  &  \alpha_4  &   \alpha_5   &  0\\
0 &  0  & 0  &  0  &   \alpha_4   &  0\\
0 &  0  & 0  &  0  &   0   &  \alpha_6
\end{array}\right),
\]
and $q(x)$ in general has the same form as $p(x)$ replacing $a_j$'s by $b_j$'s.

Direct calculations show that
\[
[q,A_n]=\left(\begin{array}{c}
b_4 x_4 x_6 -( b_1 - b_5) x_5 x_6 + b_6 x_6^3\\
 b_7 x_4 x_6 - (b_4 - b_8) x_5 x_6 + b_9 x_6^3\\
  -b_7 x_5 x_6\\
   b_{11} x_6^2\\
   0 \\
    0
    \end{array}\right),
\]
which consists of resonant monomials, but does not vanish in general. Solving
\begin{equation}\label{eg4e}
0=[g,f]=[B,p]+[q,A]+[q,p]=[B,p]+[q,A_n]+[q,p]
\end{equation}
for generic $a_i$'s yields
\[
B=\left(\begin{array}{cccccc}
 3 c_4/2 & c_2  & c_3  &  0  & 0 & 0\\
 0     &  3 c_4/2   & c_2  &  0  & 0 & 0\\
 0 & 0 & 3 c_4/2  &  0 & 0  & 0\\
 0 &  0 & 0 & c_4 & c_2 & 0\\
  0 & 0 & 0 & 0  & c_4 & 0\\
  0 & 0 &   0 & 0 & 0 & c_4/2
  \end{array}\right),
  \]
\[
 q(x)=\left(\begin{array}{c}
  b_1 x_4 x_6 + b_2 x_5 x_6 +   b_3 x_6^3\\
   (a_4 c_2 + a_7 c_3) x_4 x_6 + (b_1 - a_1 c_2 + a_5 c_2 + a_8 c_3) x_5 x_6 \\
  + (a_6 c_2 + a_{10} (b_1 - a_1 c_2) + a_9 c_3 +   a_{11} (b_2 - a_2 c_2 - a_1 c_3)) x_6^3\\
 a_7 c_2 x_4 x_6 + (a_8 c_2 + a_7 c_3) x_5 x_6 \\
   + (a_9 c_2 + a_{10} a_7 c_3 +
     a_{11} (b_1 - a_1 c_2 - a_4 c_3 + a_8 c_3)) x_6^3\\
      (a_{10} c_2 + a_{11} c_3) x_6^2 \\
 a_{11} c_2 x_6^2\\
  0
 \end{array}\right).
\]
In this case $\mbox{\rm dim}\,{\mathcal C}^{\rm for}(f)=6$, the minimum estimate given in Corollary \ref{finitedimcor}.

Solving \eqref{eg4e} with $a_i=0$ for all $i$ yields
\[
q(x)=\left(\begin{array}{c}
 b_1 x_4 x_6 + b_2 x_5 x_6 + b_3 x_6^3\\ b_1 x_5 x_6\\ 0 \\ b_{10} x_6^2\\ 0\\ 0
\end{array}\right),
\]
and $B$ as it is in the general form. This illustrates that the maximum dimension of centralizers ${\mathcal C}^{\rm for}(f)$ of the PDNF systems $f$ with the given $A$ is $6+4=10$, which is much smaller than the upper estimate  $6+(3\times 3+2\times 1)=17$ from Corollary \ref{finitedimcor}.\qed
}\end{example}
}
{To finish this section we return to the local analytic case.
\begin{proposition}\label{PDc}
Let the eigenvalues of $A$ be in a Poincar\'e domain. Then ${\mathcal C}_0^{\rm for}(f)$ is finite dimensional, and agrees with the analytic centralizer ${\mathcal C}_0(f)$. Moreover
\[
{\mathcal N}_0(f)={\mathcal C}_0(f)+\left\{\beta\,f;\,\beta\text{ analytic in  }0\right\}.
\]
\end{proposition}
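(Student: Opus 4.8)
The plan is to combine classical Poincar\'e theory with the structural results of Sections \ref{basicsec}--\ref{findimsec}. Write $f=Ax+\cdots$ with $A=A_s+A_n$, $A_s=\mathrm{diag}(\lambda_1,\dots,\lambda_n)$. That the eigenvalues lie in a Poincar\'e domain means $0$ is not in the convex hull of $\{\lambda_1,\dots,\lambda_n\}$; strictly separating $0$ from this compact convex set by a line through the origin yields a unit $u\in\mathbb C$ and a $c>0$ with $\mathrm{Re}(\bar u\lambda_i)\ge c$ for all $i$, hence $|\langle m,\lambda\rangle|\ge\mathrm{Re}(\bar u\langle m,\lambda\rangle)\ge c\,|m|$ for every $m$ with nonnegative integer entries. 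In particular there is no relation $\langle d,\lambda\rangle=0$ with $|d|>0$, so $\dim{\mathcal C}^{\rm for}(A_s)<\infty$ by Lemma \ref{fireslem}; moreover for each $j$ the resonance condition $\langle m,\lambda\rangle=\lambda_j$ forces $|m|\le|\lambda_j|/c$, so there are only finitely many resonant monomials. By Lemma \ref{resonancelem}, ${\mathcal C}_0^{\rm for}(A_s)$ is therefore a \emph{finite-dimensional space of polynomial vector fields}.

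For the first two assertions I would use that, the eigenvalues being in a Poincar\'e domain, the classical Poincar\'e normal form theorem (see \cite{Br89}) provides a convergent, tangent-to-identity coordinate change $\Phi$ taking $f$ to a PDNF $\widetilde f=A+\cdots$. Since the Lie bracket is compatible with coordinate changes, $\Phi$ induces isomorphisms ${\mathcal C}_0(f)\cong{\mathcal C}_0(\widetilde f)$ and ${\mathcal C}_0^{\rm for}(f)\cong{\mathcal C}_0^{\rm for}(\widetilde f)$. By Proposition \ref{lincentprop}, ${\mathcal C}_0^{\rm for}(\widetilde f)\subseteq{\mathcal C}_0^{\rm for}(A_s)$, which by the previous paragraph is finite dimensional and consists of polynomials; hence ${\mathcal C}_0^{\rm for}(\widetilde f)={\mathcal C}_0(\widetilde f)$ is finite dimensional, and transporting back along $\Phi$ gives ${\mathcal C}_0^{\rm for}(f)={\mathcal C}_0(f)$ of finite dimension (with the explicit bounds of Theorem \ref{finitedimT1} if desired).

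It remains to prove ${\mathcal N}_0(f)={\mathcal C}_0(f)+\{\beta f;\,\beta\text{ analytic in }0\}$. The inclusion ``$\supseteq$'' follows at once from $[h+\beta f,\,f]=-X_f(\beta)\,f$, obtained from \eqref{mixid}, since $X_f(\beta)$ is analytic. For ``$\subseteq$'', take $g\in{\mathcal N}_0(f)$ with $[g,f]=\mu f$, $\mu$ analytic. All $\lambda_i\neq0$, so $A=Df(0)$ is invertible; this forces $g(0)=0$ (if $g(0)\neq0$, straighten $g$ to $e_1$ and evaluate $\partial f/\partial x_1=\mu f$ at $0$: since $Df(0)$ stays invertible the left side is nonzero while the right side vanishes), and then $\mu(0)=0$ by the argument of step $(i)$ in the proof of Theorem \ref{linnormthm} (recall $A_s\neq0$). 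Now I want an \emph{analytic} $\beta$ with $X_f(\beta)=-\mu$: then $[g-\beta f,f]=(\mu+X_f(\beta))f=0$ puts $g-\beta f\in{\mathcal C}_0(f)$, and $g=(g-\beta f)+\beta f$ is the required decomposition. Over the formal category this is Proposition \ref{findimcentnorm} (applied to a PDNF of $f$); the real content is convergence, which I would obtain directly via the flow: replace $f$ by $cf$ for a constant $c\neq0$ with $\mathrm{Re}(c\lambda_i)>0$ for all $i$ --- this affects neither ${\mathcal N}_0$ nor ${\mathcal C}_0$ nor the set $\{\beta f\}$, and leaves the multiplier equal to $\mu$. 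Then $0$ is a hyperbolic source for the real flow $\varphi_s$ of $cf$, with $\|\varphi_s(x)\|\le Ke^{\delta s}\|x\|$ for $s\le0$ on a small ball. Since $\mu(0)=0$, so that $|\mu(\varphi_s(x))|\le L\|\varphi_s(x)\|$ near $0$, the integral
\[
\beta(x):=-\int_{-\infty}^{0}\mu(\varphi_s(x))\,ds
\]
converges uniformly in $x$, hence is analytic near $0$, and differentiating $\beta(\varphi_t(x))=-\int_{-\infty}^{t}\mu(\varphi_u(x))\,du$ at $t=0$ gives $X_{cf}(\beta)=-\mu$; rescaling $\beta$ by $c$ yields the identity for $f$.

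The bookkeeping with $\Phi$, the reductions $g(0)=0$ and $\mu(0)=0$, and the finiteness of ${\mathcal C}_0^{\rm for}(A_s)$ are all routine consequences of strict separation and the cited classical facts. The one point needing genuine care is the convergence of $\beta$: a naive degree-by-degree majorant estimate is obstructed by the nilpotent part $A_n$, since $\|(X_A|_{S_\ell})^{-1}\|$ need not be polynomially bounded in $\ell$ unless one first shrinks $A_n$ by a diagonal scaling. The flow-integral construction above circumvents this completely, its only input being the elementary source estimate; I regard arranging and justifying that construction as the main (mild) obstacle.
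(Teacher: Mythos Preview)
Your argument is correct. The first two assertions—finite dimensionality of ${\mathcal C}_0^{\rm for}(f)$ and its coincidence with ${\mathcal C}_0(f)$—are handled exactly as in the paper: convergent normalization in the Poincar\'e domain, followed by Proposition~\ref{lincentprop} and the observation that ${\mathcal C}_0^{\rm for}(A_s)$ consists of polynomials.

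For the normalizer assertion you take a genuinely different route. The paper, after passing to PDNF, invokes Theorem~\ref{linnormthm}(a) together with the proof of Proposition~\ref{simpleomega}(b): the formal $\beta$ is built degree by degree via $X_A(\beta_k)=-\lambda_k+\cdots$, and convergence comes from the small-divisor bound $|\langle m,\lambda\rangle|\ge c|m|$ controlling $\|(X_A|_{S_k})^{-1}\|$. You instead construct $\beta$ directly as the flow integral $\beta(x)=-\int_{-\infty}^0\mu(\varphi_s(x))\,ds$ after rotating $f$ so that $0$ becomes a hyperbolic source; the exponential contraction of the backward flow and $\mu(0)=0$ give uniform convergence, hence analyticity, with no reference to PDNF for this step. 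Your observation that the majorant route needs extra care when $A_n\neq0$ is accurate (the paper's cited Proposition~\ref{simpleomega}(b) treats only $A=A_s$, and one would in practice first shrink $A_n$ by a diagonal scaling), whereas the dynamical argument sidesteps this entirely. Conversely, the paper's approach keeps everything inside the formal-series framework and makes the link to Condition~Omega explicit. Both are valid; yours is shorter and more robust against the nilpotent part.
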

\begin{proof} Since the eigenvalues lie in a Poincar\'e domain, there exists a convergent transformation to normal form. The formal centralizer of the normal form consists of polynomials, hence only of analytic vector fields. Therefore the analytic and the formal centralizer of $f$ are equal. Now Theorem \ref{linnormthm}~$(a)$ and the proof of
Proposition \ref{simpleomega}~$(b)$ show the assertion about the normalizer.
\end{proof}
}
\section{Infinite dimensional centralizer of $A_s$}\label{infidimsec}
For the case of an infinite dimensional centralizer of $A_s$ it seems considerably harder to characterize the centralizer of $f=A+\cdots$ in PDNF. We obtain partial results in the general case and obtain a rather precise description in an algebraically distinguished setting (see subsection \ref{subsec42}). A further discussion will appear in future work.
\subsection{{General structure}}

By Lemma \ref{fireslem} there exists a nonconstant first integral $x_1^{d_1}\cdots x_n^{d_n}$ for $\dot x= A_sx$, and we will make use of this property in the following. On some occasions we replace \eqref{fires} by a stronger assumption, viz.
\begin{equation}\label{fullfires}
\text{There exist integers  }d_1>0,\ldots,d_n>0\text{  such that  } \sum d_i\lambda_i=0.
\end{equation}
Using the terminology from Proposition \ref{nofodecomposeprop}, this assumption is equivalent to $W=\{0\}$.
\\

 First we record an immediate consequence of Example \ref{normalizerex}.
\begin{remark}{\em
When the centralizer of $A_s$ has infinite dimension then there exist vector fields $f=A_s+\cdots$ in PDNF such that
\[
{\mathcal C}^{\rm for}(f)+\{\beta f; \,\beta \in\mathbb C[[x_1,\ldots,x_n]]\} \subsetneqq {\mathcal N}^{\rm for}(f).
\]
}
\end{remark}
In the following we will focus on the centralizer. Nontrivial first integrals of $A_s$ enable reduction by invariants, and we recall some known facts here for the reader's convenience. We first rephrase \cite{Wa91}, Proposition 1.6:
\begin{lemma}\label{torfinlem} Let $B\in \mathbb C^{n\times n}$ be semisimple. Then the following hold.
\begin{enumerate}[$(a)$]
\item The algebra
\[
I(B):=\left\{\phi\in\mathbb C[x_1,\ldots,x_n];\,X_B(\phi)=0\right\}
\]
is finitely generated.
\item For every $\alpha\in\mathbb C$ the $I(B)$-module
\[
I_\alpha(B):=\left\{\psi\in\mathbb C[x_1,\ldots,x_n];\,X_B(\psi)= \alpha\psi\right\}
\]
is finitely generated.
\item The $I(B)$-module
\[
\mathcal{C}^{\rm pol}(B):=\left\{g\in\mathbb C[x_1,\ldots,x_n]^n;\,[B,\,g]=0\right\}
\]
 is finitely generated.
\end{enumerate}
\end{lemma}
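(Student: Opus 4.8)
The plan is to reduce all three statements to elementary combinatorics of exponent vectors. Since $B$ is semisimple we may pass to eigencoordinates and assume $B=\mathrm{diag}(\lambda_1,\ldots,\lambda_n)$, so that $X_B(x^m)=\langle m,\lambda\rangle\,x^m$ for every monomial $x^m=x_1^{m_1}\cdots x_n^{m_n}$. Then $I(B)$ is the $\mathbb C$-span of those monomials $x^m$ whose exponent lies in the submonoid
\[
S:=\left\{m\in\mathbb Z_{\ge 0}^n;\ \langle m,\lambda\rangle=0\right\}
\]
of $(\mathbb Z_{\ge 0}^n,+)$. For part $(a)$ I would invoke the classical fact (Gordan's lemma) that the monoid of nonnegative integer solutions of a homogeneous linear equation is finitely generated, say by $m^{(1)},\ldots,m^{(s)}$; then $I(B)=\mathbb C\bigl[x^{m^{(1)}},\ldots,x^{m^{(s)}}\bigr]$ is a finitely generated---hence Noetherian---$\mathbb C$-algebra. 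Alternatively, $I(B)$ is the invariant ring of the diagonalizable group generated by $B$ and finite generation follows from Hilbert--Nagata; but the monoid argument is self-contained and is reused below.

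For part $(b)$, fix $\alpha\in\mathbb C$. Here $I_\alpha(B)$ is spanned by the monomials $x^m$ with $m$ in
\[
T_\alpha:=\left\{m\in\mathbb Z_{\ge 0}^n;\ \langle m,\lambda\rangle=\alpha\right\},
\]
and if $T_\alpha=\emptyset$ then $I_\alpha(B)=\{0\}$ and we are done. Otherwise the point is that $T_\alpha$ is stable under addition of elements of $S$, and that whenever $m,m'\in T_\alpha$ with $m'\le m$ componentwise one has $m-m'\in\mathbb Z_{\ge 0}^n$ and $\langle m-m',\lambda\rangle=0$, hence $m-m'\in S$ and $x^m=x^{m'}\cdot x^{m-m'}$ with $x^{m-m'}\in I(B)$. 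By Dickson's lemma the partially ordered set $T_\alpha$ has only finitely many minimal elements $m^{(1)},\ldots,m^{(t)}$, and each element of $T_\alpha$ dominates one of them; therefore $I_\alpha(B)$ is generated as an $I(B)$-module by $x^{m^{(1)}},\ldots,x^{m^{(t)}}$.

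Part $(c)$ should then follow at once. A direct computation from $[B,g](x)=Dg(x)Bx-Bg(x)$ shows, writing $g=\sum_j g_je_j$, that $[B,g]=0$ is equivalent to $X_B(g_j)=\lambda_j g_j$ for every $j$, i.e.\ $g_j\in I_{\lambda_j}(B)$; hence one obtains the $I(B)$-module decomposition $\mathcal C^{\rm pol}(B)=\bigoplus_{j=1}^n I_{\lambda_j}(B)\,e_j$, a finite direct sum of modules each finitely generated over $I(B)$ by part $(b)$, and therefore finitely generated. The only step that needs care is the module version of Dickson's lemma in $(b)$---checking that finitely many minimal exponent vectors suffice and that one may genuinely ``divide off'' an element of $S$; everything else is bookkeeping, and no real obstacle is expected.
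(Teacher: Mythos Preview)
Your argument is correct. The reduction to eigencoordinates, the use of Gordan's lemma for $(a)$, of Dickson's lemma for $(b)$, and the componentwise decomposition $\mathcal C^{\rm pol}(B)=\bigoplus_{j=1}^n I_{\lambda_j}(B)\,e_j$ for $(c)$ all go through exactly as you describe. The only point worth a word is that $\langle m,\lambda\rangle=0$ is a \emph{complex} linear equation; but writing the $\lambda_i$ in terms of a $\mathbb Q$-basis of $\mathbb Q\lambda_1+\cdots+\mathbb Q\lambda_n$ turns it into finitely many integer linear equations, so Gordan's lemma applies without change.

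As for comparison: the paper does not actually prove this lemma but quotes it from \cite{Wa91}, Proposition~1.6. The remark immediately following the statement, however, points to precisely the same ingredients you use (monomial spanning sets and Dickson's lemma), so your approach is the intended one rather than a genuinely different route.
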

{
\begin{remark}\label{torfinrem}{\em We recall a few more facts about these structures, assuming that the semisimple linear map $B$ is in diagonal form with eigenvalues $\mu_1,\ldots,\mu_n$.
\begin{enumerate}[$(a)$]
\item $I(B)$ is spanned over $\mathbb C$ by all monomials $x^m$ with $ \langle m,\mu\rangle=0,\ \ m\in\mathbb Z_+^n,\,  |m|\ge 1$, where $\mu$ is the $n$--tuple of eigenvalues of $B$. There exists a system of algebra generators of $I(B)$ that consists of monomials, but in general there will exist no algebraically independent system (equivalently, functionally independent system, as can be seen from Shafarevich \cite{Sha}) of algebra generators for $I(B)$. To determine a minimal generator set, one may (and in general has to) resort to methods from algorithmic algebra; in particular Dickson's lemma and Groebner bases are useful here $($see Cox et al. \cite{CLOS}$)$.
\item As a vector space over $\mathbb C$, $I_\alpha(B)$ is spanned by all monomials $x^k$ with $k\in\mathbb Z_+^n$ and $\left<k,\,\mu\right>=\alpha$.
\item The module ${\mathcal C}^{\rm pol}(B)$ spanned as a $\mathbb C$-vector space by all $x^\ell e_j$ with $x^\ell\in I_{\lambda_j}(B)$, and in particular contains every $Q_j$ with $Q_j(x):=x_je_j$, $1\leq j\leq n$.
\end{enumerate}
}
\end{remark}
The structure of the module ${\mathcal C}^{\rm pol}(B)$ may be quite complicated, and even minimal sets of generators may be very large. But the structure is particularly simple given the conditions described next.
\begin{lemma}\label{trivresonlylem}
Let $B={\rm diag}(\mu_1,\ldots,\mu_n)$, with all $\mu_i\not=0$, and assume that
\begin{equation}\label{trivresonly}
\left<m,\mu\right>-\mu_j=0 \text{ for } m\in \mathbb Z_+^n \Rightarrow m_j>0,\quad 1\leq j\leq n.
\end{equation}
Then ${\mathcal C}^{\rm pol}(B)$ is a free $I(B)$-module of rank $n$, generated by the $Q_j(x)=x_je_j$, $1\leq j\leq n$.
\end{lemma}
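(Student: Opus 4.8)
The plan is to decompose an arbitrary element of ${\mathcal C}^{\rm pol}(B)$ along the unit vectors $e_j$ and use the monomial description of the module from Remark \ref{torfinrem}(c) together with the special resonance hypothesis \eqref{trivresonly}. First I would recall that ${\mathcal C}^{\rm pol}(B)$ is spanned as a $\mathbb C$-vector space by all vector monomials $x^\ell e_j$ with $x^\ell\in I_{\mu_j}(B)$, i.e.\ with $\langle \ell,\mu\rangle=\mu_j$. By hypothesis \eqref{trivresonly} any such $\ell$ satisfies $\ell_j\ge 1$, so I can factor $x^\ell = x_j\cdot x^{\ell-e_j}$, and then $x^{\ell-e_j}$ satisfies $\langle \ell-e_j,\mu\rangle = \mu_j-\mu_j = 0$, hence $x^{\ell-e_j}\in I(B)$. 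Therefore $x^\ell e_j = x^{\ell-e_j}\cdot Q_j(x)$ with $Q_j(x)=x_j e_j$, which shows immediately that the $Q_j$ generate ${\mathcal C}^{\rm pol}(B)$ as an $I(B)$-module: any monomial basis element, and hence any element, is an $I(B)$-combination of the $Q_j$.

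It remains to prove freeness, i.e.\ that the only relation $\sum_{j=1}^n \phi_j Q_j = 0$ with $\phi_j\in I(B)$ is the trivial one. This is essentially immediate from looking at the $j$th component: the $j$th component of $\sum_j \phi_j Q_j$ is $x_j\phi_j$, and these live in distinct coordinate slots, so $\sum_j\phi_j Q_j=0$ forces $x_j\phi_j=0$ in $\mathbb C[x_1,\ldots,x_n]$ for each $j$, whence $\phi_j=0$ since $\mathbb C[x_1,\ldots,x_n]$ is an integral domain. I would state this cleanly: the map $I(B)^n\to {\mathcal C}^{\rm pol}(B)$, $(\phi_1,\ldots,\phi_n)\mapsto \sum\phi_jQ_j$, is surjective by the previous paragraph and injective by this argument, hence an $I(B)$-module isomorphism, establishing that ${\mathcal C}^{\rm pol}(B)$ is free of rank $n$ with the stated basis.

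There is essentially no serious obstacle here; the content of the lemma is entirely in the hypothesis \eqref{trivresonly}, which is exactly what is needed to perform the factorization $x^\ell = x_j\cdot x^{\ell-e_j}$ with the cofactor landing in $I(B)$. Without that hypothesis one would have generators $x^\ell e_j$ with $\ell_j=0$ that are not $I(B)$-multiples of $Q_j$, and the module structure becomes genuinely complicated (as the paper remarks just before the lemma). The one point I would be slightly careful about is that $B$ is assumed diagonal with nonzero eigenvalues, so that the indexing of $I_{\mu_j}(B)$ by monomials and the reduction to components are both legitimate; I would simply note that the nonvanishing of the $\mu_j$ guarantees $Q_j\in {\mathcal C}^{\rm pol}(B)$ in the first place (equivalently, $\langle e_j,\mu\rangle=\mu_j$ always holds, so $Q_j$ is always a centralizer element regardless, but the hypothesis is used for the converse inclusion). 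The whole proof is therefore three or four lines once the factorization observation is made explicit.
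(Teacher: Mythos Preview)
Your proof is correct and follows essentially the same approach as the paper: the paper also factors a resonant vector monomial $x^m e_j$ as $\phi\cdot Q_j$ with $\phi=x^{m-e_j}\in I(B)$ using hypothesis \eqref{trivresonly}, and then notes that the $Q_j$ obviously form a free system even over $\mathbb C[x_1,\ldots,x_n]$. Your componentwise argument for freeness just spells out what the paper leaves as ``obvious.''
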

\begin{proof}
Given a vector monomial $p(x)=x_1^{m_1}\cdots x_n^{m_n}e_j\in {\mathcal C}_{\rm pol}(B)$ we have $m_j>0$, hence
\[
p(x)=\phi(x)\,Q_j(x),\quad \phi(x)=x_1^{m_1}\cdots x_j^{m_{j}-1}\cdots x_n^{m_n}\in I(B),
\]
which shows that the $Q_j$ generate the module. Obviously the $Q_j$ form a free system (even over $\mathbb C[x_1,\ldots,x_n]$).
\end{proof}
}
\begin{remark}{\em
In general the rank of the $I(B)$-module  ${\mathcal C}^{\rm pol}(B)$ may be larger than the space dimension $n$. Section \ref{findimsec} contains many examples for the case $I(B)=\mathbb C$, and a simple example with nontrivial $I(B)$ is given by $B={\rm diag}\,(1,\,-1,\sqrt 2,\,2\sqrt 2)$.
}
\end{remark}

{ From \cite{Wa91}, Proposition 3.4 and Theorem 3.6, we recall symmetry reduction by invariants:
\begin{proposition}\label{nofofinprop}
Let $f=A+\cdots $ be in PDNF, and denote by $\phi_1,\ldots,\phi_r$ a generator system for $I(A_s)$.  Let $g\in\mathcal{C}^{\rm for}(A_s)$ and
\[
\Phi:=\begin{pmatrix} \phi_1\\ \vdots \\ \phi_{r}\end{pmatrix}.
\]
Then there exists a formal vector field $\widehat g$ in $r$ variables such that the identity
\[
D\Phi(x)g(x)=\widehat g\left(\Phi(x)\right)
\]
holds, hence $\Phi$ is $($formally$)$ solution-preserving from $\dot x=g(x)$ to $\dot y=\widehat g(y)$. Whenever $A_s\not=0$ then the dimension of the Zariski closure $\overline{\Phi(\mathbb C^n)}$ is smaller than $n$. {Moreover $\widehat g=0$ on $\overline{\Phi(\mathbb C^n)}$ if and only if every first integral of $A_s$ is also a first integral of $g$.}
\end{proposition}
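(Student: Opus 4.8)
The plan is to obtain the stated equivalence directly from the defining relation $D\Phi(x)\,g(x)=\widehat g(\Phi(x))$, read one component at a time; the remaining content of the proposition (existence of $\widehat g$, solution-preservation, the dimension statement) is the standard material recalled from \cite{Wa91}. Write $\widehat g=(\widehat g_1,\ldots,\widehat g_r)$ with $\widehat g_i\in\mathbb C[[y_1,\ldots,y_r]]$. The $i$-th component of $D\Phi(x)\,g(x)=\widehat g(\Phi(x))$ is $X_g(\phi_i)(x)=\widehat g_i(\Phi(x))$. Since each $\phi_i$ lies in the maximal ideal $\mathbf m=\langle x_1,\ldots,x_n\rangle$, the substitution $y=\Phi(x)$ is a well-defined continuous $\mathbb C$-algebra homomorphism $\mathbb C[[y_1,\ldots,y_r]]\to\mathbb C[[x_1,\ldots,x_n]]$, and ``$\widehat g=0$ on $\overline{\Phi(\mathbb C^n)}$'' means precisely that $\widehat g_i(\Phi(x))=0$ for $i=1,\ldots,r$, a formal function on $\overline{\Phi(\mathbb C^n)}$ being evaluated through this substitution. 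Hence ``$\widehat g=0$ on $\overline{\Phi(\mathbb C^n)}$'' is equivalent to $X_g(\phi_i)=0$ for all $i$.

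From this reformulation the implication ``$\Leftarrow$'' is immediate: if every formal first integral of $A_s$ is a first integral of $g$, then in particular each generator $\phi_i\in I(A_s)$ is a first integral of $g$, i.e.\ $X_g(\phi_i)=0$, which is exactly ``$\widehat g=0$ on $\overline{\Phi(\mathbb C^n)}$''.

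For the converse ``$\Rightarrow$'' I would use that $X_g$ is a continuous $\mathbb C$-linear derivation of $\mathbb C[[x_1,\ldots,x_n]]$: continuity follows from $X_g(\mathbf m^{k})\subseteq\mathbf m^{k-1}$, and the derivation property makes $\ker X_g$ an $\mathbf m$-adically closed subalgebra. If $X_g(\phi_i)=0$ for all $i$, then $\ker X_g$ contains the $\phi_i$, hence the subalgebra $I(A_s)=\mathbb C[\phi_1,\ldots,\phi_r]$ generated by them, hence also its $\mathbf m$-adic closure. Because $A_s$ is semisimple, i.e.\ diagonal as in \eqref{diagmat}, the operator $X_{A_s}$ preserves degree, so $I(A_s)$ is graded and spanned by the resonant monomials $x^m$ with $\langle m,\lambda\rangle=0$; consequently its $\mathbf m$-adic closure is the set of all formal power series supported on such monomials, and this is exactly the set of formal first integrals of $A_s$. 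Therefore every formal first integral of $A_s$ lies in $\ker X_g$, i.e.\ is a first integral of $g$, which completes the proof. (Reading ``first integral'' in the polynomial sense, the closure step is unnecessary and the argument is shorter.)

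The part I expect to require the most care is not a computation but two points of principle: fixing the precise meaning of ``$\widehat g=0$ on $\overline{\Phi(\mathbb C^n)}$'' for a genuinely formal, non-polynomial $\widehat g$ (handled above by interpreting it as vanishing of the pullback along $\Phi$), and the identification of the formal first integrals of $A_s$ with the $\mathbf m$-adic closure of the polynomial ones — which is exactly where the semisimplicity of $A_s$ enters. Finally it is worth noting that both sides of the equivalence have now been expressed purely through $\ker X_g$ versus $I(A_s)$, so the statement does not depend on the chosen generating system $\phi_1,\ldots,\phi_r$.
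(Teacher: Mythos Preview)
Your proof is correct. The paper does not give its own proof of this proposition; it simply recalls the result from \cite{Wa91}, Proposition~3.4 and Theorem~3.6, so there is nothing to compare against beyond noting that your treatment of the existence, solution-preservation, and dimension statements by citation matches the paper's, while your direct argument for the final equivalence (via $X_g(\phi_i)=\widehat g_i\circ\Phi$, the derivation property of $X_g$, and the identification of the formal first integrals of $A_s$ with the $\mathbf m$-adic closure of $I(A_s)$) is a clean self-contained justification of a point the paper leaves implicit.
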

}
\begin{corollary}\label{reducomm}
{Let $f$ and $\Phi$ be as in Proposition \ref{nofofinprop}. Given $g\in{\mathcal C}^{\rm for}(A_s)$ with $[g,\,f]=\alpha f$ and $X_{A_s}(\alpha)=0$, there exist formal vector fields $\widehat f$ and $\widehat g$ in $r$ variables and a formal series $\widehat\alpha$  in $r$ variables such that $\alpha=\widehat\alpha\circ\Phi$ and }
\[
D\Phi(x)f(x)=\widehat f\left(\Phi(x)\right), \quad D\Phi(x)g(x)=\widehat g\left(\Phi(x)\right)
\]
and
\[
[\widehat g ,\,\widehat f]=\widehat\alpha\widehat f\text{  on  the Zariski closure of  }\Phi(\mathbb C^n).
\]
\end{corollary}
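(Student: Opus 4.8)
The plan is to invoke Proposition \ref{nofofinprop} twice, once for $g$ and once for $f$. Since $f=A+\cdots$ is in PDNF we have $f\in\mathcal C^{\rm for}(A_s)$, and by hypothesis $g\in\mathcal C^{\rm for}(A_s)$ as well, so the proposition applies to each and yields formal vector fields $\widehat f,\widehat g$ in $r$ variables with $D\Phi(x)f(x)=\widehat f(\Phi(x))$ and $D\Phi(x)g(x)=\widehat g(\Phi(x))$. For the scalar $\alpha$, the condition $X_{A_s}(\alpha)=0$ says precisely that $\alpha$ is a formal first integral of $A_s$, hence lies in the completion of $I(A_s)$; since $\phi_1,\ldots,\phi_r$ generate $I(A_s)$ as an algebra, $\alpha$ can be written as a formal power series in the $\phi_i$, i.e.\ $\alpha=\widehat\alpha\circ\Phi$ for some formal series $\widehat\alpha$ in $r$ variables. (A small point to check: algebra generators of $I(A_s)$ generate the $\mathbf m$-adic completion as well, which follows by degree-by-degree approximation together with the fact that $I(A_s)$ is a graded subalgebra.)

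Next I would transport the bracket relation $[g,f]=\alpha f$ through $\Phi$. The key identity is that $\Phi$ being solution-preserving is compatible with Lie brackets in the following sense: applying the chain rule to $D\Phi(x)f(x)=\widehat f(\Phi(x))$ and to the analogous identity for $g$, and using \eqref{mixid} together with the fact that $[g,f]=\alpha f$ again lies in $\mathcal C^{\rm for}(A_s)$ (because $\alpha\in I(A_s)$ formally and $f\in\mathcal C^{\rm for}(A_s)$, so $\alpha f\in\mathcal C^{\rm for}(A_s)$), one obtains
\[
D\Phi(x)\,[g,f](x)=\bigl([\widehat g,\widehat f]\circ\Phi\bigr)(x)
\]
on all of $\mathbb C^n$ (interpreting both sides as formal power series). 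On the other hand, $[g,f]=\alpha f$ gives $D\Phi(x)[g,f](x)=\alpha(x)\,D\Phi(x)f(x)=(\widehat\alpha\circ\Phi)(x)\cdot(\widehat f\circ\Phi)(x)=\bigl((\widehat\alpha\,\widehat f)\circ\Phi\bigr)(x)$. Hence $[\widehat g,\widehat f]\circ\Phi=(\widehat\alpha\,\widehat f)\circ\Phi$ as formal power series in $x$.

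Finally I would pass from "equal after composition with $\Phi$" to "equal on the Zariski closure of $\Phi(\mathbb C^n)$." The point is that a formal power series identity $P\circ\Phi=Q\circ\Phi$ in the $x_i$, where $P,Q$ are formal series in $y_1,\ldots,y_r$, forces $P-Q$ to vanish on $\overline{\Phi(\mathbb C^n)}$: indeed it vanishes on $\Phi(\mathbb C^n)$ in the sense of formal Taylor expansion at the image of $0$, and since $\overline{\Phi(\mathbb C^n)}$ is an irreducible affine variety one concludes (this is the same reasoning already used implicitly in the statement of Proposition \ref{nofofinprop}, where $\widehat f$ and $\widehat g$ are only determined modulo the ideal of $\overline{\Phi(\mathbb C^n)}$). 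Applying this with $P=[\widehat g,\widehat f]$ and $Q=\widehat\alpha\,\widehat f$ entrywise gives $[\widehat g,\widehat f]=\widehat\alpha\,\widehat f$ on $\overline{\Phi(\mathbb C^n)}$, which is the claim. I expect the main technical obstacle to be the bracket-compatibility identity $D\Phi\cdot[g,f]=[\widehat g,\widehat f]\circ\Phi$: this is a functoriality statement for solution-preserving maps that must be verified carefully from the chain rule and \eqref{bracket}–\eqref{mixid}, keeping track of the fact that all the reduced objects $\widehat f,\widehat g,\widehat\alpha$ are only well defined modulo the ideal of $\overline{\Phi(\mathbb C^n)}$, so the identity should really be derived as an identity in $x$ first and only afterwards pushed down to the variety.
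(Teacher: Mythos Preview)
Your proposal is correct and follows essentially the same route as the paper: invoke Proposition~\ref{nofofinprop} for $f$ and $g$, write $\alpha=\widehat\alpha\circ\Phi$ via $X_{A_s}(\alpha)=0$, and use the bracket-compatibility identity $D\Phi(x)[f,g](x)=[\widehat f,\widehat g](\Phi(x))$, which the paper simply quotes as a ``general rule'' without further justification. Your write-up is in fact more careful than the paper's about the passage from equality after composing with $\Phi$ to equality on $\overline{\Phi(\mathbb C^n)}$ and about the well-definedness of $\widehat\alpha$.
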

\begin{proof}
The existence of $\widehat f$ and $\widehat g$ follows immediately from Proposition \ref{nofofinprop}, and the commutator property is a consequence of the general rule
\[
D\Phi(x)\left[f,\,g\right](x)=[\widehat f,\,\widehat g](\Phi(x));
\]
moreover use $\alpha=\widehat\alpha\circ\Phi$.
\end{proof}

Note that the additional condition on $\alpha$ in the statement of the Corollary can always be realized due to Theorem \ref{linnormthm}.
{
\begin{remark}\label{rigidrem}{\em
The reduction is particularly useful in applications when $\widehat f$ has the property
\begin{equation}\label{rigid}
 [\widehat f,\widehat g]=0\Rightarrow\widehat g\in\mathbb C\widehat f.
\end{equation}
  By {Proposition} \ref{nofofinprop} this reduces the discussion of ${\mathcal C}_0^{\rm for}(f)$ to commuting formal vector fields which admit $\phi_1,\ldots,\phi_r$ as first integrals.\\
 The series expansion of $\widehat f$ has vanishing semisimple linear part, and we conjecture that for (in some sense)  ``generic'' $f$  with given $A_s$, property \eqref{rigid} will always hold, but a general proof seems very hard. Below we will discuss a distinguished class of vector fields for which the property does hold.
}
\end{remark}
}

We record some consequences of conditions \eqref{fires} and \eqref{fullfires}.
\begin{lemma}\label{estprep}
 Let $A_s$ satisfy \eqref{fires}, and moreover set
\[
\dim_{\mathbb Q}\left(\mathbb Q\lambda_1+\cdots+\mathbb Q\lambda_n\right)=:q.
\]
\begin{enumerate}[$(a)$]
\item There exists  a $\mathbb Q$-basis $\nu_1,\ldots,\nu_q$ of $\mathbb Q\lambda_1+\cdots+\mathbb Q\lambda_n$ such that
\[
A_s=\nu_1C_1+\cdots+\nu_qC_q
\]
with matrices
\[
C_j={\rm diag}\,\left(c_{j1},\ldots,c_{jn}\right)
\]
that have integer entries. Moreover the $C_j$'s are linearly independent over $\mathbb C$ as well as over the field $\mathbb F$ of formal meromorphic functions.
\item Given nonzero $m$ with nonnegative integer entries, a resonance condition $\left<m,\,\lambda\right>-\lambda_k=0$ holds if and only if
\[
\left<m,\,(c_{j1},\ldots,c_{jn})\right>-c_{jk}=0,\quad 1\leq j\leq q.
\]
Likewise, one has $\left<m,\,\lambda\right>=0$  if and only if
\[
\left<m,\,(c_{j1},\ldots,c_{jn})\right>=0,\quad 1\leq j\leq q.
\]
\item If condition \eqref{fullfires} holds then there exist
\[
\left(\ell_{i1},\ldots,\ell_{in}\right),\quad 1\leq i\leq n-q
\]
with positive integer entries that are linearly independent over $\mathbb Q$, with $\left<(\ell_{i1},\ldots,\ell_{in}),\,\lambda\right>=0$ for all $i$; equivalently
\[
\left<(\ell_{i1},\ldots,\ell_{in}),(c_{j1},\ldots,c_{jn})\right>=0\text{  for all  }i,\,j.
\]
\item Assuming that \eqref{fullfires} holds, let
\[
\psi_i:=x_1^{\ell_{i1}}\cdots x_n^{\ell_{in}},\quad 1\leq i\leq n-q.
\]
If $D$ is a diagonal matrix such that $X_D(\psi_i)=0$ for all $i$ then $D$ is a linear combination of $C_1,\ldots, C_q$ over $\mathbb C$.
\end{enumerate}
\end{lemma}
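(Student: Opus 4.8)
The plan is to treat the four parts roughly in order, with parts $(a)$ and $(b)$ being essentially linear algebra over $\mathbb Q$ applied to the eigenvalue tuple, and parts $(c)$ and $(d)$ following by a duality argument on the integer lattice. First, for part $(a)$, pick any $\mathbb Q$-basis $\nu_1,\ldots,\nu_q$ of the $\mathbb Q$-vector space $V:=\mathbb Q\lambda_1+\cdots+\mathbb Q\lambda_n$; then each $\lambda_k$ has unique rational coordinates $\lambda_k=\sum_j \nu_j c_{jk}$ with $c_{jk}\in\mathbb Q$, and after clearing denominators in the $\nu_j$ (rescaling the basis) we may assume all $c_{jk}\in\mathbb Z$. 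Setting $C_j:={\rm diag}(c_{j1},\ldots,c_{jn})$ gives $A_s=\sum_j\nu_j C_j$. Linear independence of the $C_j$ over $\mathbb C$: a relation $\sum_j\gamma_j C_j=0$ with $\gamma_j\in\mathbb C$ means $\sum_j\gamma_j c_{jk}=0$ for every $k$; since the rows $(c_{j1},\ldots,c_{jn})_{k}$ — more precisely the vectors $(c_{1k},\ldots,c_{qk})$ as $k$ ranges — span $\mathbb Q^q$ (because the $\nu_j$ were a basis, so the coordinate map $V\to\mathbb Q^q$ is onto and the $\lambda_k$ generate $V$), the only solution is $\gamma_j=0$. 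Linear independence over the field $\mathbb F$ of formal meromorphic functions follows because the $C_j$ are \emph{constant} matrices: any $\mathbb F$-linear relation among constant vectors, evaluated (or: looking at the lowest-order terms), forces a $\mathbb C$-linear relation, contradiction; equivalently one invokes the Lemma on page preceding Proposition \ref{meroprop}.

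For part $(b)$, observe that $\langle m,\lambda\rangle-\lambda_k = \sum_{j=1}^q \nu_j\bigl(\langle m,(c_{j1},\ldots,c_{jn})\rangle-c_{jk}\bigr)$, and the coefficients in parentheses are rational numbers; since $\nu_1,\ldots,\nu_q$ are linearly independent over $\mathbb Q$, the whole sum vanishes if and only if each parenthesized rational vanishes. The same identity with the $-\lambda_k$ term deleted gives the statement for $\langle m,\lambda\rangle=0$. This is immediate once the basis is in place.

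For part $(c)$, condition \eqref{fullfires} supplies one vector $d=(d_1,\ldots,d_n)$ with all $d_i>0$ and $\langle d,\lambda\rangle=0$. By part $(b)$ this is the same as $\langle d, C_j\text{-row}\rangle=0$ for all $j$, i.e. $d$ lies in the integer kernel $\Lambda:=\{m\in\mathbb Z^n:\langle m,(c_{j1},\ldots,c_{jn})\rangle=0,\ 1\le j\le q\}$. The rank of $\Lambda$ is $n-q$: indeed $\Lambda\otimes\mathbb Q$ is the orthogonal complement in $\mathbb Q^n$ of the span of the $q$ rows of the $C_j$, and those rows are independent (same argument as in part $(a)$, transposed). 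So $\Lambda$ has a $\mathbb Z$-basis $b_1,\ldots,b_{n-q}$; these need not have positive entries, but since $d$ has strictly positive entries and lies in the real span of $b_1,\ldots,b_{n-q}$, the vectors $\ell_i:=b_i + M d$ for a sufficiently large positive integer $M$ all have strictly positive integer entries, remain in $\Lambda$, and stay $\mathbb Q$-linearly independent (adding a fixed multiple of one lattice element $d$ to independent vectors $b_i$, where $d$ is itself in their span, preserves independence — or one can simply take $M$ large after first replacing $b_i$ by $b_i$ together with $d$ and extracting $n-q$ independent positive combinations). By part $(b)$ again, $\langle \ell_i,(c_{j1},\ldots,c_{jn})\rangle=0$ for all $i,j$ is equivalent to $\langle\ell_i,\lambda\rangle=0$.

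For part $(d)$, write $D={\rm diag}(\delta_1,\ldots,\delta_n)$ with $\delta_i\in\mathbb C$; then $X_D(\psi_i)=\langle \ell_i,\delta\rangle\,\psi_i$, so $X_D(\psi_i)=0$ for all $i$ says exactly that $\delta=(\delta_1,\ldots,\delta_n)$ is annihilated by all the $\ell_i$, i.e. $\delta$ lies in the orthogonal complement (in $\mathbb C^n$) of ${\rm span}_{\mathbb C}\{\ell_1,\ldots,\ell_{n-q}\}$. By part $(c)$ that span, over $\mathbb Q$ hence over $\mathbb C$, is precisely the $(n-q)$-dimensional space $\Lambda\otimes\mathbb C$, whose orthogonal complement in $\mathbb C^n$ is the $q$-dimensional span of the rows of the $C_j$ — equivalently, identifying a diagonal matrix with its diagonal vector, the span of $C_1,\ldots,C_q$ over $\mathbb C$. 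Hence $D\in\mathbb C C_1+\cdots+\mathbb C C_q$, as claimed.

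The only step requiring genuine care is part $(c)$: producing generators of $\Lambda$ that are simultaneously $\mathbb Q$-independent \emph{and} entrywise strictly positive. The key observation that makes it routine is that $d$ itself is a strictly positive member of the full-rank lattice $\Lambda$, so it lies in the interior of a full-dimensional cone inside $\Lambda\otimes\mathbb R$ and translating any $\mathbb Q$-basis far enough in the $d$-direction lands it in the positive orthant while preserving independence.
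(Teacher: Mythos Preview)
Your proof is correct and follows essentially the same route as the paper: express the eigenvalues in a $\mathbb Q$-basis to get integer diagonal matrices $C_j$, read off part $(b)$ from $\mathbb Q$-independence of the $\nu_j$, use the strictly positive vector $d$ from \eqref{fullfires} to push a basis of the rational kernel into the positive orthant for part $(c)$, and conclude part $(d)$ by rank/dimension count on the orthogonality relations.

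Two small points where the paper is marginally cleaner. In part $(a)$ the paper chooses the basis so that (after reordering) $\nu_j\in\mathbb Q\lambda_j$ for $1\le j\le q$; then $C_j$ has $j$th diagonal entry nonzero and the other entries among the first $q$ equal to zero, so linear independence of the $C_j$ over $\mathbb C$ (and over $\mathbb F$) is visible without the surjectivity argument you give. In part $(c)$ the paper takes $d$ itself as the first basis vector of the rational kernel and then completes to a basis; adding integer multiples of $d$ to the remaining basis vectors is then an elementary operation that manifestly preserves independence. Your version---translate an arbitrary basis $b_1,\ldots,b_{n-q}$ by $Md$---does work, but your parenthetical ``adding a fixed multiple of one lattice element $d$ to independent vectors $b_i$ \ldots preserves independence'' is not literally true for every $M$ (the transition matrix is $I+M\mathbf{1}\alpha^{\!\top}$ with determinant $1+M\sum_k\alpha_k$, which can vanish for one value of $M$); it is true for all sufficiently large $M$, which is what you need and what your summary paragraph correctly states.
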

\begin{proof}
Part $(a)$ is most easily seen by assuming w.l.o.g.\ that $\lambda_1,\ldots,\lambda_q$ are linearly independent over $\mathbb Q$ and choosing $\nu_1\in \mathbb Q\lambda_1,\ldots,\nu_q\in \mathbb Q\lambda_q$. The matrix $C_j$ then has the form
\[
C_j={\rm diag}\,\left(\delta_{j1}a_j,\ldots,\delta_{jq}a_j,*,\ldots,*\right)
\]
with integers $a_j\not=0$ and the Kronecker symbol $\delta_{jk}$, and the asterisks $*$ denoting integers.
Part $(b)$ is clear. As for part $(c)$, complete the nonzero vector $(d_1,\ldots,d_n)$ to a basis of the solution space of $\sum z_i\lambda_i=0$ over $\mathbb Q$, w.l.o.g.\ with integer entries. If such an additional basis element has a non-positive entry then add a suitable integer multiple of $(d_1,\ldots,d_n)$. For the final assertion let $D={\rm diag}\,(\theta_1,\ldots,\theta_n)$. Then $X_D(\psi_i)=0$ if and only if $\sum \ell_{ij}\theta_j=0$. Now the assertion follows from rank considerations: The matrix $\left(\ell_{ij}\right)$ has rank $n-q$, hence its kernel has dimension $q$ and contains the linearly independent columns $\left(\theta_{j1},\ldots,\theta_{jn}\right)^\tau$, $1\leq j\leq q$.
\end{proof}

\begin{example}{\em
Given $q\in\{1,\dots,n-1\}$, for different choices of $A_s$ the minimal numbers of generators of $I(A_s)$ and $\mathcal C^{for}(A_s)$ may vary strongly. We illustrate the cases with $n=4$ and $q=2$ via the notations in Lemma \ref{estprep}.  Let $\omega$ be any irrational number.
 \begin{itemize}
 \item If $A_s=\mbox{\rm diag}(\omega, \ -2 \omega,\ 3, -1)$, then $I(A_s)$ is generated by $\varphi_1=x_1^2x_2$ and $\varphi_2=x_3x_4^3$, and $\mathcal C^{for}(A_s)$ consists of the elements $g(x)=\left(x_1g_1(\varphi_1,\varphi_2),\ x_2g_2(\varphi_1,\varphi_2),\ x_3g_3(\varphi_1,\varphi_2),\ x_4g_4(\varphi_1,\varphi_2)\right)^\tau$, with the $g_i$'s any formal series in two variables.
\item If  $A_s=\mbox{\rm diag}(\omega, \ 1,\ \omega+2, -2\omega-3)$, then $I(A_s)$ is generated by $\phi_1=x_1x_2x_3x_4$, $\phi_2=x_1x_3^3x_4^2$ and $\phi_3=x_1^2x_2^3x_4$, any two of which are functionally independent and the three are functionally dependent. This generator system is minimal, and there exists no algebraically independent generator system.

Now the elements in $\mathcal C^{for}(A_s)$ are in general of the form $g(x)=\hat g(x)+\tilde g(x)$ with $\hat g=\left(x_1g_1 ,\ x_2g_2 ,\ x_3g_3 ,\ x_4g_4 \right)^\tau$ and the $g_i$'s any formal series in the variables $\phi_1,\phi_2,\phi_3$, and $\tilde g(x)$ a series in resonant monomials whose $j$th component does not have a factor $x_j$.
\end{itemize}
}
\end{example}

The linear vector fields in the centralizer of a PDNF provide a lower bound for the centralizer dimension, as the next result shows.
\begin{theorem}\label{lowerest} Let the notation be as in Lemma \ref{estprep}, and let condition \eqref{fullfires} be satisfied. Then, for
given $f=A_s+\cdots$ in PDNF the subspace of linear vector fields in ${\mathcal C}^{\rm for}(f)$ has dimension $\geq q$, and $\dim {\mathcal C}^{\rm for}(f)\geq q+1$. Moreover, Z-generically the subspace of linear vector fields in the centralizer has dimension $q$.
\end{theorem}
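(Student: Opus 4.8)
\emph{Plan of proof.} Write $f=A_s+A_n+\sum_{j\ge 2}f_j$, with $A=A_s+A_n$ and $f_j$ homogeneous of degree $j$; being in PDNF means $[A_s,A_n]=0$ and each $f_j$ is a $\mathbb C$-linear combination of resonant vector monomials $x^me_k$. For the lower bound on the dimension of the linear subspace of ${\mathcal C}^{\rm for}(f)$, I would check that every $C_i$ supplied by Lemma \ref{estprep}$(a)$ lies in ${\mathcal C}^{\rm for}(f)$: indeed $[C_i,A_s]=0$ since both matrices are diagonal; for a resonant monomial one computes $[C_i\,x,\,x^me_k]=\bigl(\langle m,(c_{i1},\dots,c_{in})\rangle-c_{ik}\bigr)x^me_k$, which vanishes by Lemma \ref{estprep}$(b)$, so $[C_i,f_j]=0$ for all $j$; and a nonzero entry $(A_n)_{kl}$ forces $\lambda_k=\lambda_l$, hence $c_{ik}=c_{il}$ (the $\nu_j$ being $\mathbb Q$-linearly independent and the $c$'s integers), so $[C_i,A_n]=0$. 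As $C_1,\dots,C_q$ are $\mathbb C$-linearly independent, this gives dimension $\ge q$. For $\dim{\mathcal C}^{\rm for}(f)\ge q+1$ I split into two cases: if $f\ne A_s$ then $f$ is not a diagonal linear vector field, hence $f\notin\operatorname{span}_{\mathbb C}\{C_1,\dots,C_q\}$, and $\{C_1,\dots,C_q,f\}\subseteq{\mathcal C}^{\rm for}(f)$ is independent; if $f=A_s$, then \eqref{fullfires} implies \eqref{fires}, and Lemma \ref{fireslem} gives that ${\mathcal C}^{\rm for}(A_s)$ is infinite dimensional.

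For the Z-generic statement I would first reduce to a rank condition. By Proposition \ref{lincentprop} any linear $B\in{\mathcal C}^{\rm for}(f)$ satisfies $[B,A_s]=0$, so $B$ lies in the fixed space $\mathfrak g_0:=\{B\in\mathbb C^{n\times n};\,[B,A_s]=0\}$ of block-diagonal matrices; separating $[B,f]=0$ by degree, such a $B$ lies in ${\mathcal C}^{\rm for}(f)$ if and only if $B\in\mathfrak g_0$, $[B,A_n]=0$ and $[B,f_j]=0$ for $2\le j\le m$, where $m=\deg f$. Hence the linear part of ${\mathcal C}^{\rm for}(f)$ is the kernel of the linear map $\Psi_{\widetilde f}\colon\mathfrak g_0\to\mathcal P_1\oplus\bigoplus_{j=2}^m\mathcal P_j$, $B\mapsto\bigl([B,A_n],\,[B,f_2],\dots,[B,f_m]\bigr)$, whose matrix depends linearly on the coefficients of $\widetilde f=A_n+\sum f_j$. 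By the first paragraph $\dim\ker\Psi_{\widetilde f}\ge q$ for every $\widetilde f$, so the set $\{\widetilde f;\,\dim\ker\Psi_{\widetilde f}=q\}$ equals $\{\widetilde f;\,\operatorname{rank}\Psi_{\widetilde f}\ge\dim\mathfrak g_0-q\}$, which is Zariski open in the (finite dimensional) vector space of all polynomial $\widetilde f$ of degree $\le m$ in PDNF. Since a nonempty Zariski open subset of a vector space is dense, it suffices to exhibit, for some $m$, one such $\widetilde f^{\,*}$ whose linear centralizer has dimension exactly $q$.

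To build $\widetilde f^{\,*}$ I would use the invariant monomials $\psi_i=x_1^{\ell_{i1}}\cdots x_n^{\ell_{in}}$, $1\le i\le n-q$, from Lemma \ref{estprep}$(c)$: their exponent vectors $\ell_i$ have strictly positive integer entries, are $\mathbb Q$-linearly independent, and span $\{v;\,\langle v,(c_{j1},\dots,c_{jn})\rangle=0,\ 1\le j\le q\}$. For every $i$ and every $k$ the vector monomial $\psi_iQ_k=x^{\ell_i+e_k}e_k$ (with $Q_k(x)=x_ke_k$) is resonant, since $\langle\ell_i+e_k,\lambda\rangle-\lambda_k=\langle\ell_i,\lambda\rangle=0$, and genuinely nonlinear. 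I would then take $A_n^{\,*}=0$ and $\widetilde f^{\,*}=\sum_{i=1}^{n-q}\sum_{k=1}^n\psi_i^{M_i}Q_k$, with the powers $M_i$ chosen to grow fast enough that all monomials occurring below are pairwise distinct, and let $m$ be the top degree appearing. Using the identity $[B\,x,\,x^\nu e_k]=\bigl(\sum_{l,p}\nu_lB_{lp}\,x^{\nu-e_l+e_p}\bigr)e_k-\sum_lB_{lk}\,x^\nu e_l$ for a block-diagonal $B=(B_{lp})$, applied with $\nu=M_i\ell_i+e_k$ and summed over all terms, the coefficient of $x^{M_i\ell_i+e_k}e_l$ with $l\ne k$ forces $B_{lk}=0$, so $B$ must be diagonal; and then the coefficient of $x^{M_i\ell_i+e_k}e_k$ forces $\langle\ell_i,b\rangle=0$, where $b:=(B_{11},\dots,B_{nn})$. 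Consequently $\ker\Psi_{\widetilde f^{\,*}}$ is $\{b\in\mathbb C^n;\,\langle\ell_i,b\rangle=0,\ 1\le i\le n-q\}$, a $q$-dimensional space containing $\operatorname{span}_{\mathbb C}\{C_1,\dots,C_q\}$, hence equal to it.

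The hard part will be this last construction: one must arrange the terms $\psi_i^{M_i}Q_k$ and the exponents $M_i$ so that the resulting homogeneous linear conditions on $B$ genuinely collapse an arbitrary block-diagonal $B$ first to a diagonal matrix and then into $\operatorname{span}\{C_i\}$. The bookkeeping is delicate precisely when $A_s$ has repeated eigenvalues, so that $\mathfrak g_0$ is strictly larger than the space of diagonal matrices, and because the same resonant monomial can be produced by several of the brackets $[B\,x,\,\psi_i^{M_i}Q_k]$, so the cross-contributions (and the separation of the monomials belonging to different $i$) have to be tracked carefully. The availability of invariant monomials with \emph{strictly positive} exponents, which is exactly where the stronger hypothesis \eqref{fullfires} rather than merely \eqref{fires} is used, is what makes the construction possible.
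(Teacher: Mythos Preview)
Your argument is correct. The first two assertions are proved exactly as in the paper (you supply a bit more detail on why $[C_i,A_n]=0$, but the substance is the same).

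For the Z-generic statement your route genuinely differs from the paper's. The paper does not argue via Zariski openness plus a single witness; instead it directly writes down a list of generic conditions on the coefficients of $\widetilde f$ and verifies each one by hand. Concretely: when $A_s$ has repeated eigenvalues it imposes the Z-generic condition that $A_n$ have \emph{maximal} rank, and then uses the term $\phi\cdot A_s$ (with $\phi=x_j^{d_1+\cdots+d_r}x_{r+1}^{d_{r+1}}\cdots x_n^{d_n}$ built from the single exponent vector $d$ in \eqref{fullfires}) to show that the nilpotent part $D_n$ of a linear centralizer element must vanish. Once $D$ is diagonal, the paper imposes the further generic conditions that each $\psi_i\cdot A_s$ appear with nonzero coefficient, and reads off $X_D(\psi_i)=0$ from $[D,\psi_iA_s]=X_D(\psi_i)A_s$; Lemma~\ref{estprep}(d) then finishes. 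So the paper keeps $A_n$ (indeed needs it nontrivial) and uses only first powers of invariants, whereas you set $A_n^*=0$ and use powers $\psi_i^{M_i}$ to separate degrees.

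What each buys: your approach is structurally cleaner---one Zariski-openness observation plus one example---and avoids any case distinction on repeated eigenvalues. The paper's approach, by contrast, gives explicit and rather natural generic hypotheses (maximal-rank nilpotent part; finitely many coefficients nonzero), which is informative if one wants to know \emph{which} normal forms have small linear centralizer. One small point to watch in your computation: when you read off ``the coefficient of $x^{M_i\ell_i+e_k}e_l$ forces $B_{lk}=0$'', there is a cross-contribution from the summand $\psi_i^{M_i}Q_l$ as well as from $\psi_i^{M_i}Q_k$; the net coefficient is $M_i\ell_{il}B_{lk}$, which is still nonzero precisely because $\ell_{il}>0$. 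You flag this bookkeeping in your final paragraph, and it resolves cleanly.
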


\begin{proof}
With Lemma \ref{estprep}$(c)$ one sees that all $C_i\in  {\mathcal C}^{\rm for}(f)$ for any $f$ in PDNF. This shows the first assertion. The second assertion follows directly whenever $f\not=A_s$ by augmenting the linear centralizer elements with $f$, while in case $f=A_s$ the centralizer dimension is infinite by Lemma \ref{fireslem}.

We turn to the proof of the third assertion. Thus let $D \in  {\mathcal C}^{\rm for}(f)$  be linear. We first show that $D$ is necessarily diagonal in a Z-generic setting. If the eigenvalues of $A_s$ are pairwise different then this is obvious from Lemma \ref{lincommutelem}. Otherwise we impose the Z-generic condition that the rank of $A_n$ is maximal. With $D=D_s+D_n$ we have that $D_n$ commutes with $A_s$ and $A_n$, hence (assuming that equal eigenvalues of $A_s$ are listed consecutively as in \eqref{linsemi}) $D$ is block diagonal with strictly upper triangular blocks by Lemma \ref{lincommutelem} in the Appendix. We now assume $D_n\not=0$, and moreover we may assume that its upper left block is nontrivial. Thus there is some $r>1$ such that $\lambda_1=\cdots=\lambda_r$, and from \eqref{fullfires} we find that for any $j\in\{1,\ldots,r\}$ the monomial
\[
\phi:=x_j^{d_1+\cdots+d_r}x_{r+1}^{d_{r+1}}\cdots x_{n}^{d_{n}}
\]
satisfies $X_{A_s}(\phi)=0$, hence $\left[A_s,\phi\cdot A_s\right]=0$. Now one can choose $j$ such that
\[
X_{D_n}(x_j)=\sigma_{j+1}x_{j+1}+\cdots+\sigma_r x_r\not=0,
\]
which implies
\[
X_{D_n}(\phi)=(d_1+\cdots +d_r)x_j^{d_1+\cdots+d_r-1}X_{D_n}(x_j)\cdot x_{r+1}^{d_{r+1}}\cdots x_{n}^{d_{n}}+x_j^{d_1+\cdots+d_r}\cdot\left(\cdots\right)
\]
is nonzero: This is obvious if the second term is zero. Otherwise it has higher degree in $x_j$ than the first, hence cancellation is impossible. Therefore
\[
\left[ D_n,\phi\cdot A_s\right]=X_{D_n}(\phi)\cdot A_s\not=0.
\]
So, given the Z-generic condition (for vector fields of degree $\leq d_1+\cdots+d_n$) that $\phi\cdot A_s$ appears in the PDNF with nonzero coefficient, one sees that the upper left block of $D_n$ must be trivial, and repeating the argument shows that $D=D_s$ is diagonal. Furthermore, letting $\psi_i$ as in Lemma \ref{estprep} and assuming the Z-generic conditions that all $\psi_i\cdot A_s$ appear in the PDNF with nonzero coefficient, we see that all $X_D(\psi_i)=0$, whence $D$ is a $\mathbb C$-linear combination of the $C_i$. This completes the proof.
\end{proof}
\begin{corollary}
Let $h(x)=Bx+\sum_{j\geq 2} h_j(x)$ be a nonlinear formal vector field on $\mathbb C^n$ such that the eigenvalues of $B$ satisfy condition \eqref{fullfires}, and let $q$ be the dimension of the $\mathbb Q$-vector space spanned by the eigenvalues of $B$. Then  $\dim\, {\mathcal C}^{\rm for}(h)\geq q+1$.
\end{corollary}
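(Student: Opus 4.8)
The plan is to deduce this from Theorem \ref{lowerest} by passing to a Poincar\'e-Dulac normal form. First I would perform a preliminary \emph{linear} change of coordinates so that $B=B_s+B_n$ with $B_s={\rm diag}\,(\lambda_1,\ldots,\lambda_n)$ and $B_n$ strictly upper triangular (the blanket form). This changes neither the eigenvalues $\lambda_1,\ldots,\lambda_n$ nor the integer $q=\dim_{\mathbb Q}(\mathbb Q\lambda_1+\cdots+\mathbb Q\lambda_n)$, and condition \eqref{fullfires} concerns the eigenvalues only, hence is preserved. Next, by the Poincar\'e-Dulac theorem (see e.g.\ \cite{Br89}) there is a formal coordinate transformation $\Psi$, tangent to the identity, carrying $h$ to a formal vector field $f=B_s+\widetilde f$ in PDNF; since $\Psi$ fixes the linear part, $f=B_s+B_n+\cdots$ still satisfies the blanket assumptions, and the eigenvalues of its semisimple linear part still satisfy \eqref{fullfires} with the same $q$.

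Since Lie brackets are compatible with coordinate transformations (as used in the proof of Proposition \ref{nonstatprop}), the pushforward $g\mapsto\Psi_*g$ is a $\mathbb C$-linear bijection from ${\mathcal C}^{\rm for}(h)$ onto ${\mathcal C}^{\rm for}(f)$; in particular $\dim\,{\mathcal C}^{\rm for}(h)=\dim\,{\mathcal C}^{\rm for}(f)$. Now I would invoke Theorem \ref{lowerest} applied to $f$: if $f\not=B_s$ then the $q$ linearly independent matrices $C_1,\ldots,C_q$ of Lemma \ref{estprep}$(a)$ lie in ${\mathcal C}^{\rm for}(f)$ and together with $f$ itself span a $(q+1)$-dimensional subspace, while if $f=B_s$ then ${\mathcal C}^{\rm for}(f)$ is already infinite dimensional by Lemma \ref{fireslem} (recall \eqref{fullfires} implies \eqref{fires}). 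In either case $\dim\,{\mathcal C}^{\rm for}(f)\geq q+1$, and combining with the preceding equality yields $\dim\,{\mathcal C}^{\rm for}(h)\geq q+1$.

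There is essentially no obstacle here beyond bookkeeping; the one point that merits an explicit sentence is that an invertible formal transformation fixing the origin induces an isomorphism of the abstract $\mathbb C$-vector spaces of formal centralizers, so the lower bound transfers verbatim from the normal form to $h$. I note that the hypothesis that $h$ be nonlinear is inessential for the argument: it merely excludes the case $h=B_s$, in which the conclusion already follows (much more strongly) from Lemma \ref{fireslem}.
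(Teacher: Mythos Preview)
Your argument is correct and matches the paper's intended (implicit) reasoning: the Corollary is stated without proof precisely because it follows from Theorem \ref{lowerest} by passing to PDNF via a formal transformation, which induces a $\mathbb C$-linear isomorphism of formal centralizers. Your remark that the nonlinearity hypothesis is inessential is also accurate, since if the PDNF reduces to $A_s$ the centralizer is infinite dimensional anyway.
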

\subsection{An algebraically distinguished class}\label{subsec42}
As noted above, the structures of $I(A_s)$ and of $\mathcal{C}^{\rm pol}(A_s)$ may be quite complicated; in particular the minimal number of generators may become very large.
In the present subsection we will focus on normal forms $f=A+\cdots$ for which these structures are as simple as possible, from an algebraic perspective.
\begin{lemma}\label{distinglem}
Let $f=A+\cdots$ be in PDNF with the property that $A$ has no eigenvalue zero and the eigenvalues satisfy condition \eqref{trivresonly}, hence ${\mathcal C}^{\rm pol}(A_s)$ is a free module over $I(A_s)$. Moreover assume that $I(A_s)$ admits an algebraically independent set $\psi_1,\ldots,\psi_r$ of generators. Then:
\begin{enumerate}[$(a)$]
\item $A=A_s$ is semisimple with pairwise different eigenvalues, and the $\psi_i$ may be taken as monomials in the eigencoordinates of $A_s$.
\item There exist formal power series $\sigma_{i}$ in $r$ variables such that
\[
X_f(\psi_i)=\psi_i \sigma_{i}(\psi_1,\ldots,\psi_r),\quad \sigma_i(0)=0,\quad 1\leq i\leq r.
\]
\item One has the expansions
\[
\sigma_i =\sum_{j=1}^r \nu_{ij}\psi_j+\text{ h.o.t},\quad \nu_{ij}\in\mathbb C,
\]
and for every matrix $\left(\nu^*_{ij}\right)\in\mathbb C^{r\times r}$ there exists an $f=A+\cdots$ in normal form such that $\sigma_i=\sum_{j=1}^r \nu^*_{ij}\psi_j+\cdots$.
\end{enumerate}
\end{lemma}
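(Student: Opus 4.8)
The plan is to establish the three parts in order, using the structural results recalled earlier together with elementary facts about monomial algebras. For part $(a)$: since $\mathcal{C}^{\rm pol}(A_s)$ is a free $I(A_s)$-module (by the hypothesis that the eigenvalues satisfy \eqref{trivresonly} and Lemma \ref{trivresonlylem}), the module is generated by the $Q_j(x)=x_je_j$, $1\leq j\leq n$. In particular no off-diagonal linear vector field $x_je_k$ with $j\neq k$ commutes with $A_s$, which forces the eigenvalues $\lambda_1,\ldots,\lambda_n$ to be pairwise different; then Lemma \ref{lincommutelem} (via the blanket assumption) shows that $A_n$ is also diagonal, hence zero, so $A=A_s$. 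That $I(A_s)$ has a generating set of monomials is Remark \ref{torfinrem}$(a)$; combining this with the hypothesis that $I(A_s)$ admits an algebraically independent generating set, one observes that a minimal monomial generating set of $I(A_s)$ must itself be algebraically independent (a monomial algebra $\mathbb{C}[x^{m^{(1)}},\ldots,x^{m^{(r)}}]$ is a polynomial ring exactly when the exponent vectors $m^{(i)}$ are $\mathbb{Q}$-linearly independent, which is equivalent to algebraic independence of the monomials), so we may take the $\psi_i$ to be monomials in the eigencoordinates.

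For part $(b)$: each $\psi_i$ lies in $I(A_s)$, i.e.\ $X_{A_s}(\psi_i)=0$. Since $f=A_s+\cdots$ is in PDNF, Proposition \ref{lincentprop}-type reasoning (more precisely, the fact that every nonlinear term of $f$ is a resonant vector monomial, combined with the blanket structure) shows that $f\in\mathcal{C}^{\rm pol}_{\text{formal}}(A_s)$ in the sense that $[A_s,f]=0$; hence by the identity \eqref{bracket} and $X_{A_s}(\psi_i)=0$, the Lie derivative $X_f(\psi_i)$ again lies in the kernel of $X_{A_s}$, i.e.\ $X_f(\psi_i)\in I(A_s)^{\rm for}$ (the formal invariant algebra). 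Because $I(A_s)^{\rm for}=\mathbb{C}[[\psi_1,\ldots,\psi_r]]$ by part $(a)$, we can write $X_f(\psi_i)=F_i(\psi_1,\ldots,\psi_r)$ for a formal series $F_i$. It remains to see that $\psi_i$ divides $F_i\circ\Psi$ in $\mathbb{C}[[x]]$ and that the quotient, as a series in the $\psi_j$, has no constant term. Divisibility by $\psi_i$: since $\psi_i$ is a monomial $x^{m^{(i)}}$, and $X_f(\psi_i)=\sum_k (\partial\psi_i/\partial x_k) f_k$, each term $(\partial\psi_i/\partial x_k)f_k$ is divisible by $x^{m^{(i)}-e_k}$, and $f_k$ is a sum of resonant monomials $x^\ell$ with $\langle\ell,\lambda\rangle=\lambda_k$; one checks that the product $x^{m^{(i)}-e_k}x^\ell$ contains $x^{m^{(i)}}$ as a factor because $\ell_k\geq 1$ whenever $x^\ell$ resonates with $e_k$ (this is exactly condition \eqref{trivresonly}). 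Hence $\psi_i\mid X_f(\psi_i)$, and writing $X_f(\psi_i)=\psi_i\cdot\sigma_i$ with $\sigma_i\in\mathbb{C}[[x]]$, the quotient $\sigma_i$ again lies in the kernel of $X_{A_s}$ (since $\psi_i$ and $\psi_i\sigma_i$ do, and $\psi_i$ is not a zero divisor), hence $\sigma_i=\sigma_i(\psi_1,\ldots,\psi_r)$. Finally $\sigma_i(0)=0$: the lowest-order contribution to $X_f(\psi_i)$ beyond $X_{A_s}(\psi_i)=0$ comes from the quadratic (or higher) terms of $f$, so $X_f(\psi_i)$ has order strictly larger than $\deg\psi_i=|m^{(i)}|$, forcing the constant term of $\sigma_i$ to vanish.

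For part $(c)$: the expansion $\sigma_i=\sum_j\nu_{ij}\psi_j+\text{h.o.t.}$ is just the statement that $\sigma_i$ is a formal series in $\psi_1,\ldots,\psi_r$ with no constant term, as established in $(b)$; the linear (in the $\psi_j$) part is picked out by the lowest-degree nonzero terms. For the surjectivity onto all matrices $(\nu^*_{ij})$, I would argue by explicit construction: given a target matrix, build a quadratic-or-higher PDNF term $p$ of $f$ whose only effect on the $X_f(\psi_i)$ at the relevant degree is to contribute $\psi_i\sum_j\nu^*_{ij}\psi_j$. Concretely, for each pair $(i,j)$ one exhibits a resonant vector monomial $x^{\ell}e_k$ (with $\langle\ell,\lambda\rangle=\lambda_k$) such that, upon forming $X_f(\psi_i)=\sum_k(\partial\psi_i/\partial x_k)f_k$, the contribution of that monomial equals (a nonzero multiple of) $\psi_i\psi_j$ and contributes nothing to $X_f(\psi_{i'})$ for $i'\neq i$ at the same degree; rescaling gives the desired coefficient. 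Existence of such a monomial follows from the monomial structure of the $\psi$'s together with condition \eqref{trivresonly}. The main obstacle I anticipate is precisely this last bookkeeping step: verifying that one can independently prescribe each entry $\nu^*_{ij}$ without unwanted cross-contributions, which requires a careful choice of which resonant monomial to place in which component $f_k$, and possibly an induction on the total degree to correct higher-order interference between the chosen terms. The divisibility argument in $(b)$ and the algebraic-independence-equals-$\mathbb{Q}$-linear-independence observation in $(a)$ are routine once set up correctly.
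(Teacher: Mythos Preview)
Your arguments for parts $(a)$ and $(b)$ are correct. For $(b)$ you take a slightly more indirect route than the paper: rather than proving divisibility of $X_f(\psi_i)$ by $\psi_i$ monomial-by-monomial, the paper first writes $f=A+\sum_{k=1}^n\eta_k Q_k$ with $\eta_k\in I(A_s)^{\rm for}$ (using that $\mathcal{C}^{\rm pol}(A_s)$ is free on the $Q_k$), and then computes directly
\[
X_f(\psi_i)=\sum_k\eta_k\,X_{Q_k}(\psi_i)=\psi_i\sum_k m_{ik}\eta_k,
\]
using $X_{Q_k}(\psi_i)=m_{ik}\psi_i$. This gives $\sigma_i=\sum_k m_{ik}\eta_k$ explicitly, which is the key to part $(c)$.

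For part $(c)$ your plan has a genuine gap, and it is exactly the obstacle you flagged. A single resonant term $\rho_{kj}\psi_j Q_k$ contributes $m_{ik}\rho_{kj}$ to $\nu_{ij}$ for \emph{every} $i$ with $m_{ik}\neq 0$; if the monomials $\psi_i$ share variables (which they typically do), there is no way to hit one row of $(\nu_{ij})$ without touching others. So the decoupled construction you sketch cannot be carried out in general. The paper resolves this with linear algebra rather than construction: from the formula above one reads off $\nu_{ij}=\sum_k m_{ik}\rho_{kj}$, i.e.\ for each fixed $j$ the column $(\nu_{1j},\ldots,\nu_{rj})^\tau$ equals $M(\rho_{1j},\ldots,\rho_{nj})^\tau$ with $M=(m_{ik})\in\mathbb{Z}^{r\times n}$. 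The identity
\[
D\Psi(x)\,{\rm diag}(x_1,\ldots,x_n)={\rm diag}(\psi_1(x),\ldots,\psi_r(x))\,M
\]
together with the generic rank~$r$ of $D\Psi(x)$ (from algebraic independence of the $\psi_i$) forces ${\rm rank}\,M=r$. Hence the map $(\rho_{kj})\mapsto(\nu_{ij})$ is surjective, and any target matrix $(\nu^*_{ij})$ is realised. Replace your monomial-by-monomial construction with this rank argument and the proof is complete.
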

\begin{proof}
We may assume that $A_s$ is diagonal. Then part $(a)$ follows from \eqref{trivresonly} and the proof of Lemma \ref{trivresonlylem}. To prove part $(b)$ we note that
\[
f=A+\sum_{k=1}^n\left(\sum_{j=1}^r \rho_{kj}\psi_j+\text{ t.h.o.}\right)Q_k
\]
with $Q_k(x)=x_ke_k$ and constants $\rho_{kj}$,
thus the assertion follows from
\[
X_{Q_j}(x_1^{m_1}\cdots x_n^{m_n})=m_j\cdot x_1^{m_1}\cdots x_n^{m_n},
\]
and $X_A(\psi_j)=0$. To prove part $(c)$, we write
\[
\Psi(x)=\begin{pmatrix}\psi_1(x)\\ \vdots \\ \psi_r(x)\end{pmatrix}
\]
and note that the Jacobian $D\Psi(x)$ generically has rank $r$, due to algebraic independence. Now there exist nonegative integers $m_{ik}$ such that
\[
X_{Q_k}(\psi_i)=m_{ik}\psi_i,
\]
and with the matrix $M=(m_{ik})\in\mathbb Z^{r\times n}$ we can rewrite these relations as
\[
D\Psi(x)\,{\rm diag}(x_1,\ldots,x_n)={\rm diag}(\psi_1(x),\ldots,\psi_r(x))\,M.
\]
Since the generic rank of $D\Psi(x)$ equals $r$ and both diagonal matrices are generically invertible, we find that ${\rm rank}\,M=r$. Now the expansion of $f$ above yields
\[
X_f(\psi_i)=\psi_i\,\left(\sum\nu_{ij}\psi_j+\text{ t.h.o. }\right),
\]
with
\[
\nu_{ij}=\sum_k m_{ik}\rho_{kj}.
\]
Since the matrix $M$ defines a surjective linear map, and the $\rho_{kj}$ can be chosen arbitrarily, any matrix $(\nu^*_{ij})\in\mathbb C^{r\times r}$ can be obtained in this way.
\end{proof}
In view of Lemma \ref{distinglem}, Proposition \ref{nofofinprop} and Remark \ref{rigidrem} we now consider formal vector fields
\begin{equation}\label{redvf}
\widehat f(y)=\sum_{k\geq 2} \widehat f_k(y) \in\mathbb C[[y_1,\ldots,y_r]]^r,
\end{equation}
with quadratic part
\begin{equation}\label{homquadvf}
\widehat f_2(y)=\begin{pmatrix}y_1\left(\sum_j\nu_{1j}y_j\right)\\ \vdots\\ y_r\left(\sum_j\nu_{rj}y_j\right)\end{pmatrix}.
\end{equation}
For these vector fields, {generically} the centralizer of $\widehat f$ is trivial, i.e.\ equal to $\mathbb C\widehat f$.
\begin{theorem}\label{quadcentthm}
Let the vector field $\widehat f$ be given by \eqref{redvf} with \eqref{homquadvf},
and define $\widehat{\mathcal R}\subseteq\mathbb C^{r\times r}$ by the property that $\left(\nu_{ij}\right)\in\widehat{\mathcal R}$ if and only if the formal centralizer of $\widehat f_2+{\rm h.o.t.}$ is trivial for any choice of higher order terms. Then:
\begin{enumerate}[$(a)$]
\item $\widehat{\mathcal R}$ has full measure, i.e., its complement in $\mathbb C^{r\times r}$ has measure zero.
\item $\widehat{\mathcal R}$ contains a nonempty open subset of $\mathbb C^{r\times r}$.
\end{enumerate}
\end{theorem}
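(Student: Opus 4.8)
The plan is to reduce the assertion to a statement about the purely quadratic field $\widehat f_2$ alone, whose only datum is the matrix $N:=(\nu_{ij})$. First I would show
\[
\widehat{\mathcal R}=\bigl\{N\in\mathbb C^{r\times r};\ P_1=0,\ P_2=\mathbb C\widehat f_2,\ P_s=0\text{ for every }s\ge 3\bigr\},\qquad P_s:=\{h\in\mathcal P_s;\ [h,\widehat f_2]=0\}.
\]
For ``$\subseteq$'': taking all higher order terms zero shows that a nontrivial centralizer of $\widehat f_2$ witnesses $N\notin\widehat{\mathcal R}$, and a formal field commutes with the homogeneous $\widehat f_2$ iff each of its homogeneous components does, since $[h,\widehat f_2]$ has degree $\deg h+1$. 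For ``$\supseteq$'': if the displayed conditions hold, then for any higher order terms and any $\widehat g=\widehat g_t+\cdots\in\mathcal C^{\rm for}(\widehat f)$ with $\widehat g_t\ne 0$, comparison of lowest degrees in $[\widehat g,\widehat f]=0$ (note $\widehat f$ starts in degree $2$) gives $\widehat g_t\in P_t$; this is impossible for $t=1$ and $t\ge 3$, while for $t=2$ one subtracts a scalar multiple of $\widehat f$ and iterates, so $\widehat g\in\mathbb C\widehat f$.

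Each of the three conditions says that the linear map $[\,\cdot\,,\widehat f_2]\colon\mathcal P_s\to\mathcal P_{s+1}$, whose matrix depends polynomially on $N$, has maximal rank ($\dim\mathcal P_s$ for $s\ne 2$, and $\dim\mathcal P_2-1$ for $s=2$, as $\widehat f_2$ always lies in the kernel), and hence cuts out a Zariski open subset $\mathcal U_s\subseteq\mathbb C^{r\times r}$, automatically dense and of full Lebesgue measure once it is nonempty. Nonemptiness I would obtain as follows. For a generic $N$ the coordinate hyperplanes $\{y_i=0\}$ are the only invariant hyperplanes of $\widehat f_2$; they are semi-invariant with linear cofactors $L_i=\sum_j\nu_{ij}y_j$, and restricting the $i$th component of a homogeneous $h\in P_s$ to $\{y_i=0\}$ exhibits a homogeneous semi-invariant of an $(r-1)$-dimensional field of the same type whose cofactor $L_i|_{y_i=0}$ is generically not a nonnegative-integer combination of the remaining restricted cofactors, forcing $y_i\mid h_i$ for all $i$. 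Writing $h={\rm diag}(y_1,\dots,y_r)\,K$, the equation $[h,\widehat f_2]=0$ becomes $X_{\widehat f_2}(k_i)=\sum_j\nu_{ij}y_jk_j$; comparing coefficients forces $K=0$ in degree $1$ and $K$ proportional to $Ny$ in degree $2$, so $\mathcal U_1,\mathcal U_2\ne\emptyset$. For $s\ge 3$ I would use that a generic $N$ admits no nonconstant polynomial (indeed no formal meromorphic) first integral --- a monomial $y^d$ is one only if $d^{\tau}N=0$ --- so $\mathcal C^{\rm for}(\widehat f_2)$ is finite dimensional by the local form of Proposition \ref{meroprop}; a centralizer element of the largest occurring degree $\ge 3$ would then be central in $\mathcal C^{\rm for}(\widehat f_2)$ and, being $\mathbb C$-independent of $\widehat f_2$, would produce a nonconstant (rational) first integral of $\widehat f_2$ via Proposition \ref{meroprop} for $r\ge 3$ (resp.\ a polynomial inverse integrating factor in the plane case), contradicting genericity; hence $P_s=0$ for $s\ge 3$ and $\mathcal U_s\ne\emptyset$. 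Then $\widehat{\mathcal R}=\bigcap_s\mathcal U_s$, whose complement is a countable union of proper algebraic sets, so $\widehat{\mathcal R}$ has full measure; this proves $(a)$.

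For $(b)$ one cannot stop here, since a countable intersection of dense open sets need not contain an open set. The plan is to produce a degree bound $s_0=s_0(r)$ such that the finitely many open conditions $P_1=0$, $\dim P_2=1$, $P_s=0$ for $3\le s\le s_0$, together with a finite list of further open genericity conditions in degrees $\le s_0$ (no polynomial first integral, no non-coordinate invariant hyperplane, nondegeneracy of the restricted cofactors), already force $P_s=0$ for all $s>s_0$; the propagation would again use the divisibility reduction $y_i\mid h_i$ together with the finite-dimensionality argument, which bounds the top degree of a centralizer element. The set carved out by these finitely many polynomial (in)equations is then Zariski open and nonempty (it contains the generic matrices of $(a)$), hence contains a nonempty Euclidean open subset of $\mathbb C^{r\times r}$ lying in $\widehat{\mathcal R}$.

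I expect the uniform degree bound underlying $(b)$ to be the main obstacle: passing from ``generically $P_s=0$ for every $s\ge 3$'' to ``finitely many conditions imply $P_s=0$ for every $s\ge 3$'' requires genuine control of the degrees of homogeneous fields commuting with $\widehat f_2$. This should follow from the finite-dimensionality of $\mathcal C^{\rm for}(\widehat f_2)$ (Proposition \ref{meroprop}) combined with the semi-invariant structure of $\widehat f_2$, or from a direct perturbation analysis around one explicitly chosen coupled quadratic field with verifiably trivial centralizer --- where one must avoid the diagonal choice of $N$, which already has a centralizer of dimension $r$.
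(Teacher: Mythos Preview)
Your initial reduction is correct and matches the paper: $\widehat{\mathcal R}$ is exactly the set of $N$ for which every homogeneous $q$ with $[\widehat f_2,q]=0$ is a scalar multiple of $\widehat f_2$, and from this the triviality of $\mathcal C^{\rm for}(\widehat f_2+\text{h.o.t.})$ follows by the subtraction argument you give.

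The substantive gap is your argument that $P_s=0$ for $s\ge 3$. You invoke Proposition~\ref{meroprop} to say that a top-degree element $h\in P_s$ together with $\widehat f_2$ would yield a nonconstant meromorphic first integral. But Proposition~\ref{meroprop} requires $\mathbb C$-independent centralizer elements that are $\mathbb L$-\emph{dependent}; two elements in $\mathbb L^r$ with $r\ge 3$ are generically $\mathbb L$-independent, so nothing forces a first integral and no contradiction arises. (For $r=2$ your integrating-factor variant is plausible, but that does not help in the general case.) Thus neither the full-measure statement nor the degree bound you need for $(b)$ is actually established by your outline; you have shown at best that each $\mathcal U_s$ is Zariski open, not that it is nonempty for $s\ge 3$.

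The paper's route is different and supplies precisely the missing degree control. One exploits that $\widehat f_2$, being of the special form \eqref{homquadvf}, admits an explicit point $c$ with $\widehat f_2(c)=c$ (take $c=\nu_{11}^{-1}e_1$ when $\nu_{11}\neq 0$). Lemma~\ref{adelemcent} then says: whenever a homogeneous $q$ of degree $s>1$ commutes with $\widehat f_2$, the eigenvalues $\mu_1=2,\mu_2,\ldots,\mu_r$ of $D\widehat f_2(c)$ satisfy an integer relation $\sum\ell_i\mu_i+\ell=\mu_k$ with $\sum\ell_i+\ell=s$. Since here $\mu_j=\nu_{j1}\nu_{11}^{-1}$, taking $\nu_{11},\ldots,\nu_{r1}$ linearly independent over $\mathbb Q$ forces $s=2$ (and then $q\in\mathbb C\widehat f_2$), which gives $(a)$ outright on a full-measure set. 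For $(b)$ one perturbs the explicit field $P(y)=(y_1^2,2y_1y_2,\ldots,2y_1y_r)^\tau$, for which $DP(e_1)=2I_r$; for small perturbations all eigenvalues of $D\widetilde P(c)$ have real parts in an interval $(a,A)\subset(1,3)$, and the real part of the relation above gives $as\le A<3$, hence $s=2$ uniformly on an open neighbourhood. The remaining condition $\dim P_2=1$ is a single Zariski-open condition, already nonempty by the full-measure part, so one gets a genuine Euclidean-open subset of $\widehat{\mathcal R}$.

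In short: replace the first-integral/Proposition~\ref{meroprop} mechanism by the idempotent $c$ and the eigenvalue arithmetic of $D\widehat f_2(c)$; this is what delivers the degree bound that your outline correctly identifies as the crux.
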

\begin{proof} \begin{enumerate}[$(i)$]
\item We invoke Lemma \ref{adelemcent} from the Appendix:
Assume there exists $c$ such that $\widehat f_2(c)=c\not=0$, with eigenvalues $\mu_1,\ldots,\mu_r$ of $D\widehat f_2(c)$. If $q\not=0$ is homogeneous of degree $s>1$ such that $[\widehat f_2,q]=0$ then there exist nonnegative integeres $\ell,\ell_1,\ldots,\ell_r$ and some $k\in\{1,\ldots,r\}$ such that
\[
\sum\ell_i+\ell=s\text{ and }\sum\ell_i\,\mu_i+\ell=\mu_k.
\]
This observation may yield degree bounds for homogeneous vector fields commuting with $\widehat f_2$. Specifically, if $\mu_1,\ldots,\mu_r$ are linearly independent over the rationals $\mathbb Q$ then necessarily $s=2$, since the second relation in \eqref{quadraticcent} for $k>1$ would imply $\ell_k=1$, all other $\ell_j=0$, and $\ell=0$ (a contradiction to $s>1$), and for $k=1$ one gets $\ell_2=\cdots=\ell_r=0$ and $2\ell_1+\ell=2,\,\ell_1+\ell=s$, which leaves only the possibility that $\ell_1=0$ and $\ell=2$. Moreover, the proof of \cite{WaADE}, Proposition 10.5 shows that $q$ is a scalar multiple of $\widehat f_2$.
\item Generally, if the only homogeneous vector fields commuting with $\widehat  f_2$ are the scalar multiples of $\widehat f_2$, then the formal centralizer of $\widehat f$ is trivial: If $\widehat g=\widehat g_s+\cdots\not=0 $ commutes with $\widehat f$ then $[\widehat f_2,\,\widehat g_s]=0$, hence $s=2$ and $\widehat g_2=\beta \widehat f_2$
 for some scalar $\beta$. Now $\widehat g-\beta\widehat f$ commutes with $\widehat f$ and must be equal to zero since it cannot have lowest order term of degree $>2$. Hence $\widehat g-\beta\widehat f=0$.
\item
We may assume that $\nu_{11}\not=0$ as one defining condition for $\widehat{\mathcal R}$. Then
\[
c:=\begin{pmatrix} \nu_{11}^{-1}\\ 0\\ \vdots\\0\end{pmatrix}
\]
satisfies $\widehat f_2(c)=c$. With
\[
D\widehat f_2(y)\,z=\begin{pmatrix}y_1\left(\sum_j\nu_{1j}z_j\right)+z_1\left(\sum_j\nu_{1j}y_j\right)\\ \vdots\\ y_r\left(\sum_j\nu_{rj}z_j\right)+z_r\left(\sum_j\nu_{rj}y_j\right)\end{pmatrix}
\]
one first notes that $e_1$ is an eigenvector with eigenvalue $2$, and for $2\leq j\leq r$ one has
\[
D\widehat f_2(c)e_j=\nu_{11}^{-1}\nu_{1j}e_1+\nu_{j1}\nu_{11}^{-1} e_j.
\]
Therefore the matrix representing $D\widehat f_2(c)$ is upper triangular, with eigenvalues $2$ and $\nu_{j1}\nu_{11}^{-1}$, $2\leq j\leq r$.
By parts $(i)$ and $(ii)$ one sees that ${\mathcal C}_0^{\rm for}(\widehat f_2+{\rm h.o.t.})$ is trivial whenever $\nu_{11},\ldots,\nu_{r1}$ are linearly independent over the rationals $\mathbb Q$ (and the remaining $\nu_{jk}$ are arbitrary). Since this property defines the complement of a measure zero set, statement $(a)$ is proven.
\item
For statement $(b)$ we first consider the special vector field
\[
P(y)=\begin{pmatrix}y_1^2\\ 2y_1y_2\\ \vdots\\2y_1y_r\end{pmatrix},
\]
with $c:=e_1$ satisfying $P(c)=c$ and $D{P}(c)=2 I_r$. For all $(\varepsilon_{ij})\in\mathbb C^{r\times r}$ with $|\varepsilon_{ij}|$ sufficiently small the vector field
\[
\widetilde P(y):=P(y)+\begin{pmatrix}y_1\left(\sum_j\varepsilon_{1j}y_j\right)\\ \vdots\\ y_r\left(\sum_j\varepsilon_{rj}y_j\right)\end{pmatrix}
\]
satisfies $\widetilde P(c)=(1+\varepsilon_{11}) c$, hence $\widetilde P(\alpha c)=\alpha c$ for some $\alpha$ near $1$, and all eigenvalues $\rho_i$ of $D\widetilde P(\alpha e_1)$ satisfy
\[
1\leq a<{\rm Re}\,(\rho_i)\leq A<3
\]
with suitable real $a$ and $A$.\\
Now we assume that there exists a vector field $Q$, homogeneous of degree $s\geq 2$, that commutes with $\widetilde P$. By \eqref{quadraticcent} in Lemma \ref{adelemcent} there exist nonnegative integers $\ell_i$ and $\ell$ such that $\sum \ell_i +\ell =s$ and $\sum\ell_i\rho_i+\ell=\rho_k$ for some $k$. We obtain the estimates
\[
a\cdot s\leq \sum\ell_i\,{\rm Re}\,\rho_i+\ell={\rm Re}\,\rho_k<A
\]
and therefore $s<A/a<3$, which implies $s=2$ by part $(i)$ of the proof. So, for an open set in the coefficient space of all homogeneous quadratic vector fields we have that commuting homogeneous vector fields must have degree two.
\item  The set of all vector fields $\widehat f_2$ as in \eqref{homquadvf} admitting a nontrivial homogeneous commuting vector field of degree two is Zariski-closed: Evaluating $[\widehat f_2,\widehat g_2]=0$, with undetermined coefficients for $\widehat g_2$, one obtains a linear system of equations with matrix entries depending polynomially on the $\nu_{ij}$. This system has only the trivial solutions (corresponding to $\widehat g\in\mathbb C\cdot \widehat f_2$) for some {$\widehat f_2$},  as was noted in part $(i)$. Hence the determinant conditions necessary for the existence of nontrivial solutions define a proper (Zariski) closed set in the space of all homogeneous quadratic vector fields. To summarize, {by part $(ii)$} we have trivial centralizer for all vector fields $\widehat f_2+\cdots$, with the $(\nu_{ij})$ in a nonempty and open subset of $\mathbb C^{r\times r}$.
\end{enumerate}
\end{proof}
Next we show that generically the centralizer of a vector field $f$ in PDNF is spanned by $f$ and linear vector fields in the distinguished algebraic setting under consideration.
\begin{theorem}\label{disalgcentthm} Let
\[
f(x)=Ax+\sum_{j\geq 2}f_j(x)
\]
be in PDNF, and assume that the hypotheses of Lemma \ref{distinglem} are satisfied. Let $L:= {\rm max}\,\{{\rm deg}\,\psi_i,\,1\leq i\leq r\}$  and let $W$ be the space of coefficients of $f_2+\cdots+f_L$. Define ${\mathcal R}\subseteq W$ by the property that the coefficients of $f_2+\cdots +f_L$ lie in ${\mathcal R}$ if and only if the formal centralizer of $f=A+f_2+\cdots f_L+{\rm h.o.t.}$ contains only linear combinations of $f$ and linear vector fields, for any choice of higher order terms. Then:
\begin{enumerate}[$(a)$]
\item ${\mathcal R}$ has full measure in $W$.
\item ${\mathcal R}$ contains a nonempty open subset of $W$.
\end{enumerate}
\end{theorem}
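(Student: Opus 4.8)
The plan is to use the reduction by invariants (Proposition~\ref{nofofinprop} and Corollary~\ref{reducomm}) to pass from $f$ to the reduced vector field $\widehat f$ on $\mathbb C^r$, to invoke Theorem~\ref{quadcentthm} to ensure that $\widehat f$ is ``rigid'' in the sense of \eqref{rigid} for a generic matrix $(\nu_{ij})$, and then to describe the part of ${\mathcal C}^{\rm for}(f)$ that the reduction collapses to zero by means of the free $\mathbb C[[\psi]]$-module structure coming from Lemma~\ref{distinglem} and Lemma~\ref{trivresonlylem}.

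First I would set up the reduction. By Lemma~\ref{distinglem}$(a)$, $A=A_s$ is diagonal with pairwise distinct, nonzero eigenvalues and the generators $\psi_1,\ldots,\psi_r$ of $I(A_s)$ are monomials, hence multiplicatively independent, so $I(A_s)^{\rm for}=\mathbb C[[\psi_1,\ldots,\psi_r]]$. Since \eqref{trivresonly} holds, every resonant vector monomial $x^me_k$ has $m_k\ge 1$, so by Proposition~\ref{lincentprop} any $g\in{\mathcal C}^{\rm for}(f)$ has the form $g=\sum_k x_kh_k(\psi)e_k$ with $h_k\in\mathbb C[[\psi]]$. With $\Psi=(\psi_1,\ldots,\psi_r)^\tau$ the map $\Phi\mapsto\Phi\circ\Psi$ is injective on $\mathbb C[[y_1,\ldots,y_r]]$ and $\Psi(\mathbb C^n)$ is Zariski dense in $\mathbb C^r$, so Corollary~\ref{reducomm} (with $\alpha=0$) produces formal vector fields $\widehat f,\widehat g$ on $\mathbb C^r$ with $D\Psi\cdot f=\widehat f\circ\Psi$, $D\Psi\cdot g=\widehat g\circ\Psi$ and $[\widehat g,\widehat f]=0$ in $\mathbb C[[y]]^r$. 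By Lemma~\ref{distinglem}$(b),(c)$, $\widehat f(y)=\bigl(y_1\sigma_1(y),\ldots,y_r\sigma_r(y)\bigr)^\tau$ with $\sigma_i(0)=0$, quadratic part $\widehat f_2$ as in \eqref{homquadvf}, and the assignment from the coefficients of $f$ parametrized by $W$ to the matrix $(\nu_{ij})\in\mathbb C^{r\times r}$ is $\mathbb C$-linear and surjective.

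Next I would fix the genericity set. Let $\mathcal S\subseteq\mathbb C^{r\times r}$ be the set of $(\nu_{ij})$ for which the only homogeneous vector fields commuting with $\widehat f_2$ are the scalar multiples of $\widehat f_2$; the proof of Theorem~\ref{quadcentthm} exhibits $\mathcal S$ as a set of full measure containing a nonempty open subset, and shows that for $(\nu_{ij})\in\mathcal S$ the centralizer ${\mathcal C}_0^{\rm for}(\widehat f)$ is trivial, for any choice of higher order terms of $\widehat f$. Moreover, for $(\nu_{ij})\in\mathcal S$ the field $\widehat f$ has no nonconstant formal first integral: the lowest homogeneous part of such an integral would be a nonconstant homogeneous first integral $\phi$ of $\widehat f_2$ (note $\widehat f$ has vanishing linear part), whence $\phi\cdot\widehat f_2$ would be a homogeneous vector field of degree $>2$ commuting with $\widehat f_2$ --- impossible. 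Now take $f$ with $(\nu_{ij})\in\mathcal S$ and $g\in{\mathcal C}^{\rm for}(f)$; then $\widehat g=c\widehat f$ for some $c\in\mathbb C$, so $g':=g-cf\in{\mathcal C}^{\rm for}(f)$ satisfies $D\Psi\cdot g'=0$, i.e.\ $X_{g'}(\psi_i)=0$ for all $i$. Writing $g'=\sum_k x_kh_k(\psi)e_k$ and using $X_{Q_k}(\psi_i)=m_{ik}\psi_i$, this reads $Mh=0$ with $h=(h_1,\ldots,h_n)^\tau$ and $M=(m_{ik})$; since ${\rm rank}\,M=r$ (proof of Lemma~\ref{distinglem}$(c)$), the set of such $g'$ is a free $\mathbb C[[\psi]]$-module of rank $n-r$ whose basis may be taken to be the linear vector fields $g^{(l)}=D^{(l)}x$, $1\le l\le n-r$, with $D^{(l)}$ diagonal and $X_{D^{(l)}}(\psi_i)=0$ for all $i$ --- each lying in ${\mathcal C}^{\rm for}(f)$ because the resonant monomials of any $f$ in the distinguished class are of the form $x_k\cdot(\text{a monomial in the }\psi_i)$. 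Writing $g'=\sum_l\phi_lg^{(l)}$ with $\phi_l\in\mathbb C[[\psi]]$, the identity \eqref{mixid} and $[g^{(l)},f]=0$ give $0=[g',f]=-\sum_lX_f(\phi_l)g^{(l)}$, hence $X_f(\phi_l)=0$ by $\mathbb C[[\psi]]$-linear independence of the $g^{(l)}$; by the chain rule $X_f(\phi_l)=(X_{\widehat f}\Phi_l)\circ\Psi$ with $\phi_l=\Phi_l\circ\Psi$, so $X_{\widehat f}\Phi_l=0$ and each $\phi_l$ is constant. Thus $g'$ is a $\mathbb C$-linear combination of linear vector fields and $g=cf+g'$, so the coefficients of $f$ lie in $\mathcal R$. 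Since $(\nu_{ij})$ depends $\mathbb C$-linearly and surjectively on these coefficients, $\mathcal R$ contains the preimage of $\mathcal S$, which has full measure in $W$ and contains a nonempty open subset of $W$, proving $(a)$ and $(b)$.

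I expect the main obstacle to be the last paragraph, and specifically two points. First, one must squeeze out of (the proof of) Theorem~\ref{quadcentthm} not merely triviality of ${\mathcal C}_0^{\rm for}(\widehat f)$ but the stronger statement that $\widehat f_2$ admits only scalar-multiple homogeneous symmetries, which is exactly what rules out nonconstant formal first integrals of $\widehat f$ and hence forces the $\phi_l$ to be constants. Second, one must carry out the degree bookkeeping, i.e.\ confirm that the matrix $(\nu_{ij})$ --- and therefore the whole genericity condition --- is genuinely determined by the finitely many coefficients of $f$ parametrized by $W$, so that the full-measure and openness properties transfer to $W$; the remaining module-theoretic and chain-rule computations are routine.
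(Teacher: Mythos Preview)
Your argument is correct and follows essentially the same architecture as the paper: reduce via $\Psi$, invoke Theorem~\ref{quadcentthm} to make $\widehat f$ rigid, subtract a multiple of $f$ so that $\widehat g=0$, write what remains in the free $I(A_s)$-module structure, and show the coefficients are first integrals of $\widehat f$, hence constants. Two execution details differ, both harmless: the paper expands $g$ in the full basis $Q_1,\ldots,Q_n$ and shows each $\theta_j$ is a first integral of $f$ (your kernel-basis $g^{(l)}$ is a clean shortcut to the same conclusion); and for the ``no nonconstant first integral of $\widehat f$'' step the paper applies Lemma~\ref{adelem} at the special point $c$, whereas your observation that a homogeneous first integral $\phi$ would make $\phi\cdot\widehat f_2$ a nontrivial homogeneous centralizer element of degree $>2$ is a slicker route to the same contradiction once you have extracted the ``only scalar multiples commute with $\widehat f_2$'' property from the proof of Theorem~\ref{quadcentthm}.
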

\begin{proof}
\begin{enumerate}[$(i)$]
\item With the module generators $Q_i$ we write
\[
f(x)=Ax+\sum_{1\leq i\leq n}\eta_i(x)Q_i(x);\quad \eta_i(x)=\widehat\eta_i(\psi_1(x),\ldots,\psi_r(x))
\]
with uniquely determined $\widehat\eta_i\in\mathbb C[[y_1,\ldots,y_r]]$. Expand
\[
\eta_i(y)=\sum\eta_{ij}y_j+\text{ t.h.o.},
\]
then the reduced vector field of $f$ has the form \eqref{redvf} with quadratic part \eqref{homquadvf}, and by definition of $L$ the reduced vector field of $A+f_2+\cdots+f_L$ has the same quadratic part. Now define $\mathcal{R^*}\subseteq W$ by the property that an element of $W$ lies in $ \mathcal{R^*}$ if and only if the corresponding coefficients of $ \widehat f_2$ lie in $\widehat {\mathcal R}\subseteq \mathbb C^{r\times r}$, as defined in Theorem \ref{quadcentthm}. 
 Since the mapping from coefficients of $f_2+\cdots+f_L$ to coefficients of $\widehat f_2$ is linear and surjective, we see that the complement of  $\mathcal{R^*}$ has measure zero, and $\mathcal{R^*}$ contains a nonempty open set. {We will now show that ${\mathcal R}^*\subseteq\mathcal R$, which implies the assertion of the theorem. For the following assume that the coefficients of $f_2+\cdots+f_L$ lie in ${\mathcal R}^*$.}
\item
 From Theorem \ref{quadcentthm} {and the definition of ${\mathcal R}^*$} we see that the centralizer of $\widehat f$ is trivial. Thus, if $g$ commutes with $f$ then we may assume $\widehat g=0$, hence
\[
g(x)=\sum_{1\leq \ell\leq n}\theta_\ell(x) Q_\ell(x),\quad \theta_\ell(x)=\widehat\theta_\ell(\psi_1(x),\ldots,\psi_r(x))
\]
with uniquely determined $\widehat\theta_\ell\in\mathbb C[[y_1,\ldots,y_r]]$, and furthermore
\[
X_g(\psi_i)=0 \text{ for }1\leq i\leq r.
\]
\item We next evaluate the commutator relation {$[f,\,g]=0$}. For all $i,\,j\in\{1,\ldots,n\}$ one has (by standard properties of the Lie bracket)
\[
\left[\eta_iQ_i,\theta_j Q_j\right]=\eta_iX_{Q_i}(\theta_j)Q_j-\theta_j X_{Q_j}(\eta_i)Q_i +\eta_i\theta_j[Q_i,\,Q_j]
\]
Since the last term always vanishes and $[A,\,g]=0$, summation yields
\[
0=[f,\,g]=\sum_{i,j} \eta_iX_{Q_i}(\theta_j)Q_j -\sum_{i,j}\theta_jX_{Q_j}(\eta_i)Q_i
\]
The second term yields
\[
\sum_{i=1}^n\sum_{j=1}^n \theta_jX_{Q_j}(\eta_i)Q_i=\sum_{i=1}^n X_g(\eta_i)\,Q_i=0,
\]
since $\eta_i=\widehat\eta_i(\psi_1,\ldots,\psi_r)$ and every $\psi_j$ is a first integral of $g$. There remains
\[
0=\sum_{j=1}^n\sum_{i=1}^n \eta_iX_{Q_i}(\theta_j)Q_j =\sum_{j=1}^n X_f(\theta_j)Q_j.
\]
Since the $Q_j$'s form a free system, this is equivalent to
\[
X_f(\theta_j)=0,\quad 1\leq j\leq n;
\]
in other words, each $\theta_j$ is a first integral of $f$.
\item Passing to the reduced vector field, we have that each $\widehat \theta_j$ is a first integral of $\widehat f$. {By our assumption, the coefficients of $\widehat f_2$ lie in $\widehat{\mathcal R}$, and we will show that the only first integrals of $\widehat f_2$ are the constants.} Therefore $g$ is necessarily linear, and the theorem is proven.\\
Thus assume that $\widehat\theta=\widehat\theta_s+\cdots\in\mathbb C[[y_1,\ldots,y_r]]$ is a nonconstant first integral of $\widehat f$, with $s>0$ and $\widehat \theta_s\not=0$. Then $\widehat \theta_s$ is a homogeneous polynomial first integral of $\widehat f_2$. As in the proof of Lemma \ref{quadcentthm} we consider $c=\begin{pmatrix} \ast\\ 0\\ \vdots\\ 0\end{pmatrix}$ with $\widehat f_2(c)=c\not=0$, and let $\mu_1=2,\mu_2,\ldots,\mu_r$ be the eigenvalues of $D\widehat f_2(c)$. According to Lemma \ref{adelem} from the Appendix (in the special case $\lambda=0$)  there exist nonnegative integers $k\leq s$, $k_1,\ldots, k_{r}$ such that
\[
\sum k_i+k=s,\quad \sum k_i\mu_i+k=0
\]
But such a relation cannot hold when the $\mu_i$'s are linearly independent over $\mathbb Q$, and neither can it hold when all $\mu_i$'s have positive real parts. By the definition of $\widehat{\mathcal R}$  the proof is finished.
\end{enumerate}
\end{proof}
Passing to normal forms and back we obtain the following result. For the proof note only that the map sending the coefficients of a Taylor polynomial to the coefficients of the Taylor polynomial of its PDNF (made unique in a suitable manner) is rational and surjective.
\begin{corollary}
Let $h(x)=Bx+\sum_{j\geq 2} h_j(x)$ be a nonlinear formal vector field on $\mathbb C^n$ such that $B=B_s$ satisfies the hypotheses of Lemma \ref{distinglem}. Then there exists an $L>0$ such that the coefficient space $\widetilde W$ of $h_2+\cdots+h_L$ contains a full measure subset $\widetilde {\mathcal R}$ with the property: For coefficients in $\widetilde {\mathcal R}$ the formal centralizer of $h$ is uniquely determined by the formal centralizer of $B+h_2+\cdots+h_L$, and has finite dimension.
\end{corollary}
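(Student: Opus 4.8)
\emph{The plan.} I would deduce the corollary from Theorem~\ref{disalgcentthm} by passing to a Poincar\'e--Dulac normal form and exploiting that this passage acts rationally and surjectively on truncated Taylor data. First, fix a formal near-identity transformation $T$ carrying $h$ into a PDNF $f=A+f_2+\cdots$, made canonical (e.g.\ by requiring $T$ to contain no resonant terms). Since $B=B_s$ we have $A=A_s=B$, so $f$ again satisfies the hypotheses of Lemma~\ref{distinglem}; and since Lie brackets transform naturally under coordinate changes, $T$ induces a $\mathbb C$-linear isomorphism ${\mathcal C}^{\rm for}(h)\cong{\mathcal C}^{\rm for}(f)$ sending $h$ to $f$. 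It thus suffices to describe ${\mathcal C}^{\rm for}(f)$ and transport the answer back.

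\emph{Transferring genericity.} Let $L:=\max_i\deg\psi_i$ as in Theorem~\ref{disalgcentthm}, and let $W$ (resp.\ $\widetilde W$) denote the coefficient space of $f_2+\cdots+f_L$ (resp.\ of $h_2+\cdots+h_L$). The normal form recursion yields the coefficients of $f_2,\ldots,f_L$ as polynomial expressions in those of $h_2,\ldots,h_L$ --- at each degree one solves a homological equation and substitutes, dividing only by the nonzero numbers $\langle m,\lambda\rangle-\lambda_j$ --- so there is a polynomial map $\Pi:\widetilde W\to W$. It is surjective, since a PDNF polynomial is its own normal form, so $\Pi$ restricts to the identity on $W\subseteq\widetilde W$. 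A surjective polynomial map is dominant, hence pulls proper subvarieties back to proper subvarieties; since the complement of the full-measure set $\mathcal R\subseteq W$ from Theorem~\ref{disalgcentthm} is contained in a countable union of proper subvarieties (the $\mathbb Q$-linear-dependence loci arising, through a linear surjection, in the proof of Theorem~\ref{quadcentthm}), its preimage $\widetilde{\mathcal R}:=\Pi^{-1}(\mathcal R)$ has full measure in $\widetilde W$. This is the $\widetilde{\mathcal R}$ asserted by the corollary.

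\emph{Identifying the centralizer.} Suppose now the coefficients of $h_2+\cdots+h_L$ lie in $\widetilde{\mathcal R}$, equivalently those of $f_2+\cdots+f_L$ lie in $\mathcal R$. By Theorem~\ref{disalgcentthm} and the argument of its proof, every element of ${\mathcal C}^{\rm for}(f)$ has the form $cf+D$ with $c\in\mathbb C$ and $D$ a diagonal matrix satisfying $X_D(\psi_i)=0$ for all $i$; by the rank computation in the proof of Lemma~\ref{estprep}(d) the space of such $D$ is exactly $\mathrm{span}_{\mathbb C}(C_1,\ldots,C_q)$ (it is $q$-dimensional because the $n-q$ exponent vectors of the algebraically independent $\psi_i$ are linearly independent, and it contains the $q$ linearly independent $C_j$, each annihilating every $\psi_i$ by Lemma~\ref{estprep}(b)). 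Since $\mathcal R$ excludes the origin, $f$ is nonlinear, whence ${\mathcal C}^{\rm for}(f)=\mathbb C f\oplus\mathrm{span}_{\mathbb C}(C_1,\ldots,C_q)$, of dimension $q+1$ --- in particular finite. The linear summand here depends only on $A_s$; running the same argument for $B+h_2+\cdots+h_L$, whose PDNF agrees with $f$ through degree $L$ and hence has coefficients in $\mathcal R$ too, gives ${\mathcal C}^{\rm for}(B+h_2+\cdots+h_L)=\mathbb C(B+h_2+\cdots+h_L)\oplus V$ with $V$ the transported span of the $C_j$. Transporting back via $T$ then shows ${\mathcal C}^{\rm for}(h)=\mathbb C h\oplus V$ with the same linear-type summand, which is the asserted determination.

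\emph{Main obstacle.} Once Theorem~\ref{disalgcentthm} is available, everything but the last identification is routine bookkeeping. The delicate point is to give precise meaning to ``uniquely determined by the formal centralizer of $B+h_2+\cdots+h_L$'' and to verify it: one must show the linear part of ${\mathcal C}^{\rm for}(f)$ is insensitive to the terms of $f$ beyond degree $L$, which rests on the fact (extracted from the proof of Theorem~\ref{disalgcentthm}) that generically every linear centralizer element is already forced by the quadratic part $\widehat f_2$ of the reduced vector field, and that $\widehat f_2$ is determined by $f_2,\ldots,f_L$ alone. Handling this cleanly requires tracking the normalizing transformations of $h$ and of its truncation, which agree only to the relevant finite order.
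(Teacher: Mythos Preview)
Your proof is correct and follows the same route as the paper's (very terse) argument: pass to PDNF via a canonical near-identity transformation, observe that the induced map on degree-$\leq L$ coefficient data is polynomial (the paper says ``rational'') and surjective, and pull back the full-measure set $\mathcal R$ from Theorem~\ref{disalgcentthm}. Your additional work---identifying the centralizer explicitly as $\mathbb C f\oplus\mathrm{span}(C_1,\ldots,C_q)$ and checking that this linear summand depends only on the degree-$\leq L$ data---is not in the paper but usefully pins down what ``uniquely determined'' means.
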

\subsection{Some classes of examples}
While the distinguished algebraic setting discussed in the previous subsection poses strong restrictions on the linear parts, we now exhibit some examples to show that this class is reasonably large. We start with the case when the invariant algebra of $A_s$ has a single generator; for this a complete analysis is possible. The result in part $(b)$ of the following Proposition is known from \cite{Wa00b}, Example 2, where a different proof was given.
\begin{proposition}\label{singleprop} In the setting of Lemma \ref{estprep}, let $q=n-1$, and assume that \eqref{fullfires} is satisfied, w.l.o.g.\ with the $d_i$ relatively prime. Set $\psi:=x_1^{d_1}\cdots x_n^{d_n}$.
\begin{enumerate}[$(a)$]
\item Then the $\lambda_i$ are pairwise different, hence $A_n=0$, and every PDNF is of the form
\[
f(x)=A_sx+\sum_{j\geq 1} \psi(x)^jU_jx
\]
with diagonal matrices $U_j$.
\item Whenever some $X_{U_j}(\psi)\not=0$ then ${\mathcal C}^{\rm for}(f)$ is spanned by $f$ and the linear vector fields $C_1,\dots,C_{n-1}$  from Lemma \ref{estprep}, thus $\dim {\mathcal C}^{\rm for}(f)= n$.
\item {If all $X_{U_i}(\psi)=0$ then, with $C_1,\ldots,C_{n-1}$ as in Lemma \ref{estprep} we have
\[
f(x)=\sigma_1(\psi(x))C_1x+\cdots +\sigma_{n-1}(\psi(x))C_{n-1}x
\]
with series $\sigma_i$ in one variable. If all $\sigma_i$'s are constants then $f=A$. If some $\sigma_j$ is not constant then
\[
{\mathcal C}^{\rm for}(f)=\left\{\rho_1(\psi(x))C_1x+\cdots +\rho_{n-1}(\psi(x))C_{n-1}x; \,\rho_i\in\mathbb C[[x_1]]\right\}.
\]
}
\end{enumerate}
\end{proposition}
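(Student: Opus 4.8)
The plan is to transport everything to one-variable power series in the single invariant $\psi$, using the resonance analysis of part $(a)$; after that the argument is a short computation plus elementary linear algebra.

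For part $(a)$: since $q=n-1$, the $\mathbb Q$-solution space of $\langle z,\lambda\rangle=0$ is one-dimensional, spanned by $(d_1,\dots,d_n)$ (made unique by the relatively-prime normalization). First I would rule out coincidences $\lambda_i=\lambda_j$: such an equality would put $e_i-e_j$ into that solution space, contradicting that it is spanned by a vector with strictly positive entries. Hence $A_s$ is regular semisimple, so $A_n$ — which commutes with $A_s$ — is diagonal and nilpotent, thus $A_n=0$ and $A=A_s$. Next, because the $d_i$ are relatively prime, every nonnegative integer solution of $\langle z,\lambda\rangle=0$ is a nonnegative multiple of $(d_1,\dots,d_n)$, so $I(A_s)=\mathbb C[\psi]$; and any resonant vector monomial $x^m e_k$ with $|m|\ge1$ has $m_k\ge1$ (otherwise $m-e_k$ would be a nonzero solution of $\langle z,\lambda\rangle=0$ with a negative entry and nonnegative entries elsewhere, forcing $m=0$) and $x^{m-e_k}\in I(A_s)$, hence $x^m e_k=\psi^c x_k e_k$ for a unique $c\ge0$. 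Grouping the resonant monomials by their $\psi$-degree gives $f(x)=A_sx+\sum_{j\ge1}\psi(x)^jU_jx$ with diagonal $U_j$.

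By Proposition \ref{lincentprop} every $g\in{\mathcal C}^{\rm for}(f)$ lies in ${\mathcal C}^{\rm for}(A_s)$, so the same monomial analysis yields $g_k(x)=\gamma_k(\psi(x))x_k$ for one-variable power series $\gamma_k$; write likewise $f_k(x)=\phi_k(\psi(x))x_k$ with $\phi_k(0)=\lambda_k$. Setting $S_\phi:=\sum_i d_i\phi_i$ and $S_\gamma:=\sum_i d_i\gamma_i$, the chain and product rules give $X_g(\psi)=\psi\,S_\gamma(\psi)$, and a short computation then produces the bracket formula
\[
[g,f]_k=\bigl(\phi_k'(\psi)\,\psi\,S_\gamma(\psi)-\gamma_k'(\psi)\,\psi\,S_\phi(\psi)\bigr)x_k,\qquad 1\le k\le n,
\]
so $[g,f]=0$ is equivalent to the scalar identities $\phi_k'S_\gamma=\gamma_k'S_\phi$ in $\mathbb C[[t]]$. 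Since $X_{U_j}(\psi)=\psi\cdot(\text{coefficient of }t^j\text{ in }S_\phi)$ and $S_\phi(0)=\sum d_i\lambda_i=0$, the two cases of the proposition are exactly $S_\phi\ne0$ (some $X_{U_j}(\psi)\ne0$) and $S_\phi=0$ (all $X_{U_i}(\psi)=0$). For part $(b)$, with $S_\phi\ne0$ I would sum the identities weighted by $d_k$ to get $S_\phi'S_\gamma=S_\gamma'S_\phi$, and comparing orders of vanishing at $0$ forces $S_\gamma=c\,S_\phi$ for some $c\in\mathbb C$. Substituting back gives $(c\phi_k'-\gamma_k')S_\phi=0$, so (as $\mathbb C[[t]]$ is a domain) $\gamma_k=c\phi_k+b_k$ with $b_k\in\mathbb C$; the consistency relation $S_\gamma=cS_\phi+\sum d_kb_k$ then forces $\sum d_kb_k=0$, i.e.\ $X_D(\psi)=0$ for $D:={\rm diag}(b_1,\dots,b_n)$, which by Lemma \ref{estprep}$(d)$ means $D\in\mathbb C C_1+\dots+\mathbb C C_{n-1}$. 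Hence $g=cf+Dx$ and ${\mathcal C}^{\rm for}(f)\subseteq\mathbb C f+\sum\mathbb C C_ix$; the reverse inclusion and $\mathbb C$-linear independence of $f,C_1x,\dots,C_{n-1}x$ (using that $f$ is nonlinear and Lemma \ref{estprep}$(a)$) give $\dim{\mathcal C}^{\rm for}(f)=n$.

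For part $(c)$, with $S_\phi=0$ the vector $(\phi_1(t),\dots,\phi_n(t))$ lies in the hyperplane $\{\sum d_kz_k=0\}$ for every $t$, and since $C_1,\dots,C_{n-1}$ (as diagonal-entry vectors) form a basis of that hyperplane by Lemma \ref{estprep}$(d)$, there are unique power series $\sigma_i$ with $f(x)=\sum_i\sigma_i(\psi(x))C_ix$; if all $\sigma_i$ are constant then $f$ is linear, so $f=A$. If some $\sigma_j$ is nonconstant — equivalently some $\phi_k$ is nonconstant — then for $g\in{\mathcal C}^{\rm for}(f)$ the bracket formula with $S_\phi=0$ reduces to $\phi_k'S_\gamma=0$ for all $k$, forcing $S_\gamma=0$, so $(\gamma_k(t))$ lies in the same hyperplane and $g(x)=\sum_i\rho_i(\psi(x))C_ix$; conversely any such $g$ has $S_\gamma=0$ and hence commutes with $f$ by the bracket formula, which is the asserted description. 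The only places needing genuine care are part $(a)$ — establishing $I(A_s)=\mathbb C[\psi]$ and that \emph{every} element of ${\mathcal C}^{\rm for}(A_s)$ is ``$\psi$-diagonal'' — and the back-substitution in part $(b)$, where the constraint $X_D(\psi)=0$ on the linear remainder must be read off from the consistency of $S_\gamma$; alternatively one could route the whole argument through the one-variable symmetry reduction of Proposition \ref{nofofinprop} and Corollary \ref{reducomm} (the reduced field being $\widehat f(y)=yS_\phi(y)$, whose formal centralizer is $\mathbb C\widehat f$ when $S_\phi\ne0$), but handling the kernel part $g-cf$ requires essentially the same bookkeeping.
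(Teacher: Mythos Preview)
Your proof is correct and takes a genuinely different route from the paper's. For part $(a)$ the paper simply appeals to Lemma~\ref{distinglem}, while you argue directly from the resonance structure; this is self-contained and fine (the phrase ``forcing $m=0$'' should really read ``forcing $m-e_k=0$, contradicting $m_k=0$'', but the logic is sound). For parts $(b)$ and $(c)$ the paper routes the argument through the symmetry-reduction machinery: it invokes Corollary~\ref{reducomm} and the proof of Theorem~\ref{disalgcentthm} to first subtract a multiple of $f$ so that $X_g(\psi)=0$, then writes the remainder in the $C_i$-basis and uses $[f,\,C_ix]=0$ together with linear independence of the $C_i$ over $\mathbb F$ (Lemma~\ref{estprep}$(a)$) to force the coefficient series to be first integrals of $f$, hence constants. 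You bypass the reduction framework entirely: the explicit bracket formula $[g,f]_k=(\phi_k'S_\gamma-\gamma_k'S_\phi)\,\psi\,x_k$ converts everything to scalar power-series identities, and the Wronskian relation $S_\phi'S_\gamma=S_\gamma'S_\phi$ yields $S_\gamma=cS_\phi$ in one stroke --- accomplishing exactly what the paper extracts from the trivial centralizer of the one-variable reduced field $\widehat f(y)=yS_\phi(y)$. Your approach is more elementary and self-contained (no appeal to Theorem~\ref{disalgcentthm}), at the cost of being more computational; the paper's approach is more structural and shows how this case fits into the general reduction scheme of Section~\ref{infidimsec}. As you correctly observe at the end, the two routes converge on the same bookkeeping once $g-cf$ is in hand.
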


\begin{proof} For part $(a)$ see Lemma \ref{distinglem}. We turn to statements $(b)$ and $(c)$.
\begin{enumerate}[$(i)$]
\item { Assume that some  $X_{U_j}(\psi)\not=0$, and let $g\in{\mathcal C}^{\rm for}(f)\subseteq {\mathcal C}^{\rm for}(A_s)$ and therefore
\[
g(x)= \sum_{k\geq 0} \psi(x)^kB_kx
\]
with diagonal matrices $B_k$.}
By Corollary \ref{reducomm} we obtain a symmetry reduction for $f$ to dimension one, in the form
\[
X_f(\psi)=\widehat f(\psi):=\sum\psi(x)^{j+1}\theta_j \not=0,
\]
with constants $\theta_j$ such that  $X_{U_j}(\psi)=\theta_j\psi$, and likewise we get
\[
X_g(\psi)=\widehat g(\psi):=\sum\psi(x)^{j+1}\sigma_j
\]
with constants $\sigma_j$ such that $X_{B_j}(\psi)=\sigma_j\psi$. The argument in the proof of Theorem \ref{disalgcentthm} shows that we may take $X_g(\psi)=0$, thus all $X_{B_j}(\psi)=0$.
\item Now with $C_1,\ldots,C_{n-1}$ as in Lemma \ref{estprep} we may write
\[
g(x)=\tau_1(\psi(x))C_1x+\cdots +\tau_{n-1}(\psi(x))C_{n-1}x
\]
where the $\tau_i$ are series in one variable, and obtain
\[
\left[f,\,g\right]=\sum X_f(\tau_j)C_j
\]
since all $\left[f,\,C_j\right]=0$. Finally this implies that all $X_f(\tau_j)=0$ by Lemma \ref{estprep}$(a)$, hence all $\tau_j$ are constants{, and part $(b)$ is proven.}
\item {The representation of $f$ follows with Lemma \ref{estprep}, moreover for $g\in{\mathcal C}^{\rm for}(f)$ we have a representation
\[
g(x)=\tau_1(\psi(x))C_1x+\cdots +\tau_{n-1}(\psi(x))C_{n-1}x+\tau_n(\psi(x))Ix
\]
with the identity matrix $I$. Then
\[
[g(x),\,f(x)]=[\tau_n(\psi(x))Ix,f(x)]=-\tau_n(\psi(x))\sum X_I(\sigma_j(\psi(x))C_jx,
\]
and $\tau_n\not=0$ implies that all $\sigma_j$ are constant, as asserted in part $(c)$.
}
\end{enumerate}
\end{proof}
We present two further classes of linear vector fields $B$ that satisfy condition \eqref{trivresonly} and have $I(B)$ admitting an algebraically independent generator system.
{
\begin{proposition}\label{maxindprop}
Let $r>1$ and ${0=m_0<}m_1<\cdots<m_{r-1}<m_r=n$, moreover let $d_1,\ldots,d_n$ be positive integers, and set
\[
D_i:={\sum_{k=m_{i-1}+1}^{m_{i}}}d_ke_k^\tau,\quad 1\leq i\leq r.
\]
Assume furthermore that {$d_{m_{i-1}+1},\ldots,d_{m_{i}}$} are relatively prime for each $i\in\{1,\ldots,r\}$. Now let $\mu_1,\ldots,\mu_n\in\mathbb C$, $\mu=(\mu_1,\ldots,\mu_n)$ such that
\[
\left<D_1,\mu\right>=\cdots=\left<{D_r},\mu\right>=0;\quad \dim_{\mathbb Q}\left(\mathbb Q\mu_1+\cdots+\mathbb Q\mu_n\right)=n-r.
\]
Then the following hold for $B:={\rm diag}\,(\mu_1,\ldots,\mu_n)$.
\begin{enumerate}[$(a)$]
\item  $I(B)$ is generated by the algebraically independent monomials $\phi_i(x):=x^{D_i}$, $1\leq i\leq r$.
\item The module ${\mathcal C}^{\rm pol}(B)$ is generated by the $Q_j(x)=x_je_j$, $1\leq j\leq n$, and free.
\end{enumerate}
\end{proposition}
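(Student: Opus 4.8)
The plan is to work with the lattice of resonance relations of $\mu$ throughout. For part $(a)$, recall from Remark~\ref{torfinrem}$(a)$ that $I(B)$ is spanned over $\mathbb{C}$ by the monomials $x^m$ with $m\in\mathbb{Z}_+^n$, $|m|\ge 1$ and $\langle m,\mu\rangle=0$, so it suffices to write every such $m$ as a $\mathbb{Z}_+$-linear combination of $D_1,\dots,D_r$. Consider $R:=\{z\in\mathbb{Q}^n:\langle z,\mu\rangle=0\}$, the kernel of the surjection $\mathbb{Q}^n\to\mathbb{Q}\mu_1+\cdots+\mathbb{Q}\mu_n$; by hypothesis $\dim_{\mathbb{Q}}R=n-(n-r)=r$. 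The rows $D_1,\dots,D_r$ lie in $R$ by assumption and are $\mathbb{Q}$-linearly independent because they are nonzero with pairwise disjoint supports (the $i$-th support being the block $\{m_{i-1}+1,\dots,m_i\}$), hence they form a $\mathbb{Q}$-basis of $R$. Given $m\in\mathbb{Z}_+^n$ with $\langle m,\mu\rangle=0$, write $m=\sum_i c_iD_i$ with $c_i\in\mathbb{Q}$; restricting to block $i$ gives $m_k=c_id_k$ for each $k$ in that block, so $c_i\ge 0$ (as $m_k\ge 0$, $d_k>0$), and writing $c_i=p_i/q_i$ in lowest terms, $q_i$ divides every $d_k$ in block $i$ and hence divides their gcd $=1$, so $c_i\in\mathbb{Z}_+$ and $x^m=\prod_i\phi_i^{c_i}$. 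Algebraic independence is then immediate: since the $D_i$ have disjoint supports the map $a\mapsto\sum_i a_iD_i$ from $\mathbb{Z}_+^r$ to $\mathbb{Z}_+^n$ is injective, so distinct monomials $\prod_i\phi_i^{a_i}$ are distinct monomials in $x$, hence $\mathbb{C}$-linearly independent, which rules out any nontrivial polynomial relation among the $\phi_i$.

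For part $(b)$ the plan is to reduce to Lemma~\ref{trivresonlylem} by verifying condition \eqref{trivresonly} (and using that all $\mu_i\neq 0$, as that lemma also requires). Suppose $\ell\in\mathbb{Z}_+^n$ satisfies $\langle\ell,\mu\rangle=\mu_j$ and let $i_0$ be the block containing $j$. Then $\ell-e_j\in R$, so $\ell-e_j=\sum_i c_iD_i$ with $c_i\in\mathbb{Q}$, and the block-by-block gcd argument from part $(a)$ — applied to the blocks $i\neq i_0$ directly, and to block $i_0$ using its coordinates $k\neq j$ — shows once more that every $c_i$ lies in $\mathbb{Z}_+$. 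Reading off the $j$-th coordinate yields $\ell_j-1=c_{i_0}d_j\ge 0$, hence $\ell_j=1+c_{i_0}d_j\ge 1>0$, which is precisely \eqref{trivresonly}. Lemma~\ref{trivresonlylem} then gives that ${\mathcal C}^{\rm pol}(B)$ is the free $I(B)$-module of rank $n$ with basis $Q_1,\dots,Q_n$.

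The genuinely substantive ingredient in both parts is the elementary number theory: turning the a priori rational coefficients $c_i$ in the $R$-decomposition into nonnegative integers, which is exactly where the relative primality of the $d_k$ within each block is used, together with the structural observation that the disjointness of the supports of the $D_i$ both fixes $\dim_{\mathbb{Q}}R=r$ and forces the decompositions to respect the block partition; everything else is bookkeeping. One point to keep track of in part $(b)$ is the hypothesis $\mu_i\neq 0$ of Lemma~\ref{trivresonlylem}: this holds automatically unless some block is a singleton $\{m_i\}$, in which case $\langle D_i,\mu\rangle=0$ forces $\mu_{m_i}=0$, so that case should be read as excluded from the statement.
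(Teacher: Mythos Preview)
Your argument is correct and follows the same route as the paper: both parts rest on writing the relevant exponent vector as a $\mathbb{Q}$-combination of $D_1,\ldots,D_r$ and then using the disjoint-support, positivity, and relative-primality observations to force the coefficients into $\mathbb{Z}_+$. The paper handles part~$(b)$ by a direct argument rather than by quoting Lemma~\ref{trivresonlylem}, but the content is identical; your remark that singleton blocks would force some $\mu_i=0$ is accurate and matches the paper's (implicit) use of the dimension hypothesis to ensure all $\mu_i$ are nonzero and pairwise different. One small point of phrasing: in block $i_0$ you infer $c_{i_0}\in\mathbb{Z}_+$ ``using its coordinates $k\neq j$'', but the gcd of the $d_k$ with $k\neq j$ need not equal~$1$; the integrality (which you do not actually need for the conclusion $\ell_j\ge 1$) comes for free once you also include the $j$-th coordinate, since $\ell_j-1=c_{i_0}d_j\in\mathbb{Z}$.
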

\begin{proof}
As a vector space over $\mathbb C$, $I(B)$ is spanned by all monomials $x^\ell$ with $\left<\ell,\,\mu \right>=0$. By construction of $B$, every such $\ell$ is a $\mathbb Q$-linear combination of the $D_i$. By positivity the coefficients in this linear combination must be nonnegative, and by relative primeness of the $d_{m_{i}},\ldots,d_{m_{i+1}}$ the coefficients must be integers. This proves part $(a)$. For the proof of part $(b)$, note first that the $\mu_i$'s are nonzero and pairwise different due to the dimension requirement, hence every linear vector field commuting with $B$ is a linear combination of the $Q_j$. Furthermore recall that a vector monomial $x^Ne_k^\tau$, $|N|\geq 2$ commutes with $B$ if and only if
\[
\left<N-e_k^\tau,\mu\right>=0.
\]
If $i$ is such that $m_{i}<k\leq m_{i+1}$ then the coefficient of $D_i$ in the linear combination must be positive, thus $x^Ne_k^\tau=\psi\cdot x_ke_k^\tau$ with some $\psi\in I(B)$.
\end{proof}
}
\begin{example}{\em
The class characterized in the above proposition includes the nonresonant coupled oscillators, with
\[
B={\rm diag}\left(i\omega_1,-i\omega_1,\ldots,i\omega_m,-i\omega_m\right)
\]
and $\omega_1,\ldots,\omega_m\in\mathbb R_+$ linearly independent over the rationals.
}
\end{example}
{ For a further class we explicitly construct $B$ with integer eigenvalues.
\begin{proposition}\label{nonindprop}
Let $\ell_1,\ldots,\ell_n$ be pairwise relatively prime integers, all $\ell_i>1$, and $L:=\ell_1\cdots\ell_n$. Moreover let $\varepsilon_1,\ldots,\varepsilon_n\in\{1,\,-1\}$, and set
\[
B:={\rm diag}\,\left(\varepsilon_1L/\ell_1,\ldots,\varepsilon_nL/\ell_n\right).
\]
Then the following hold.
\begin{enumerate}[$(a)$]
\item The $I(B)$-module ${\mathcal C}^{\rm pol}(B)$ is free, and generated by the $Q_j(x)=x_je_j$, $1\leq j\leq n$.
\item If $($w.l.o.g.$)$ $\varepsilon_n=-1$ and all other $\varepsilon_i=1$ then $I(B$) admits an algebraically independent generator set, viz.
\[
\phi_1(x)={x_1^{\ell_1}x_n^{\ell_n}},\ldots, \phi_{n-1}(x)={x_{n-1}^{\ell_{n-1}}x_n^{\ell_{n}}}.
\]
\end{enumerate}
\end{proposition}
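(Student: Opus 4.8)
The plan is to obtain part $(a)$ from Lemma \ref{trivresonlylem} by checking that $B$ satisfies condition \eqref{trivresonly}, and to prove part $(b)$ by analysing directly the lattice of resonant exponents of $B$.

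For part $(a)$, note first that each $\mu_i=\varepsilon_i L/\ell_i$ is nonzero, so the only thing left to verify is \eqref{trivresonly}. So one would take $m\in\mathbb Z_+^n$ with $\langle m,\mu\rangle=\mu_j$ and must deduce $m_j>0$. Clearing denominators gives the integer relation $\sum_i m_i\varepsilon_i(L/\ell_i)=\varepsilon_j(L/\ell_j)$. The decisive observation is that, by pairwise coprimality of the $\ell_i$, one has $\ell_j\mid L/\ell_i$ whenever $i\ne j$, while $L/\ell_j$ is coprime to $\ell_j$; hence reducing the relation modulo $\ell_j$ kills every term on the left except the one with $i=j$, leaving $(m_j-1)\,\varepsilon_j(L/\ell_j)\equiv 0\pmod{\ell_j}$. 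Since $\varepsilon_j$ and $L/\ell_j$ are units modulo $\ell_j$ this forces $m_j\equiv 1\pmod{\ell_j}$, and as $\ell_j>1$ and $m_j\ge0$ we conclude $m_j\ge1$. Then Lemma \ref{trivresonlylem} applies and gives assertion $(a)$.

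For part $(b)$, with $\varepsilon_n=-1$ and $\varepsilon_1=\cdots=\varepsilon_{n-1}=1$, a one-line computation shows $X_B(\phi_i)=(\ell_i\mu_i+\ell_n\mu_n)\phi_i=(L-L)\phi_i=0$, so each $\phi_i\in I(B)$. To see that the $\phi_i$ generate $I(B)$ I would invoke Remark \ref{torfinrem}$(a)$, which reduces the claim to showing that every monomial $x^m$ with $\langle m,\mu\rangle=0$ is a monomial in the $\phi_i$. Such an $m$ satisfies $\sum_{i=1}^{n-1}m_i(L/\ell_i)=m_n(L/\ell_n)$; reducing this modulo $\ell_j$ for each $j\le n-1$ (now $\ell_j\mid L/\ell_n$, while $L/\ell_j$ is a unit) yields $\ell_j\mid m_j$, and writing $m_j=\ell_jk_j$ and substituting back gives $m_n=\ell_n\sum_{j=1}^{n-1}k_j$, so that $x^m=\prod_{j=1}^{n-1}\phi_j^{k_j}$. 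Thus $I(B)=\mathbb C[\phi_1,\dots,\phi_{n-1}]$. Algebraic independence of the $\phi_i$ then follows because their exponent vectors $\ell_ie_i+\ell_ne_n$ are supported on the pairwise distinct coordinates $1,\dots,n-1$, hence $\mathbb Q$-linearly independent, so that distinct monomials in the $\phi_i$ map to distinct monomials in $x$ and no nontrivial polynomial relation can survive.

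The only real bookkeeping is keeping straight which of the integers $L/\ell_i$ are divisible by a given $\ell_j$; this is exactly where the pairwise coprimality of the $\ell_i$ is used, and it drives both halves of the argument. Past that point I expect no genuine obstacle.
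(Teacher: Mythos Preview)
Your proof is correct and follows essentially the same route as the paper: both arguments reduce modulo $\ell_j$ and use pairwise coprimality to force the divisibility $\ell_j\mid m_j$ (resp.\ $m_j\equiv 1\bmod\ell_j$), then feed this into Lemma~\ref{trivresonlylem} for part~$(a)$ and into the lattice description of $I(B)$ for part~$(b)$. Your write-up is slightly more explicit than the paper's in that you verify $\phi_i\in I(B)$ and spell out the algebraic-independence step, but the underlying arithmetic is identical.
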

\begin{proof} To prove part $(a)$, assume (for instance) that there are nonnegative integers $m_2,\ldots,m_n$ such that
\[
\varepsilon_2m_2L/\ell_2+\cdots+\varepsilon_nm_nL/\ell_n=L/\ell_1.
\]
Rewriting and setting $L^*:=\ell_2\cdots\ell_n$ one gets
\[
\ell_1\left(\varepsilon_2m_2L^*/\ell_2+\cdots+\varepsilon_nm_nL^*/\ell_n\right)=L^*,
\]
hence $\ell_1$ divides $L^*$; a contradiction.\\
 To prove part $(b)$, let $d_1,\ldots,d_n$ be nonnegative integers, not all zero, such that
\[
d_1L/\ell_1+\cdots + d_{n-1}L/\ell_{n-1}-d_nL/\ell_n=0.
\]
Then necessarily $d_i=\ell_id_i^*$ for all $i$, and there remains
\[
d_1^*+\cdots+d_{n-1}^*-d_n^*=0.
\]
From this one sees that $(d_1^*,\ldots,d_n^*)$ is a nonnegative integer combination of $e_1^\tau+e_n^\tau,\ldots,e_{n-1}^\tau+e_n^\tau$, and the assertion follows.
\end{proof}
\begin{remark} {\em Assume the setting of Proposition \ref{nonindprop} with dimension $n>3$. Then the algebra $I(B)$ does not admit an algebraically independent generator set whenever two of the $\varepsilon_i$ are positive and two negative. This follows from considering the last step in the proof above: For instance when $n=4$ and
\[
1=\varepsilon_1=\varepsilon_2=-\varepsilon_3=-\varepsilon_4,
\]
one arrives at
\[
d_1^*+d_2^*-d_3^*-d_4^*=0,
\]
and to obtain all nonnegative solutions one needs the four generators $e_1^\tau+e_3^\tau$, $e_1^\tau+e_4^\tau$, $e_2^\tau+e_3^\tau$, $e_2^\tau+e_4^\tau$.
}
\end{remark}
}

Finally we characterize the three dimensional linear vector fields with no eigenvalue zero for which the algebraically distinguished setting holds. If the eigenvalues span a two dimensional vector space over the rationals then Proposition \ref{singleprop} applies. There remains the case when the eigenvalues span a one dimensional vector space over the rationals. The following result covers all cases, up to a time scaling.
\begin{proposition}\label{dim3prop}
Let $d_1,\,d_2,\,d_3$ be positive integers such that their greatest common divisor satisfies ${\rm gcd}\,(d_1,d_2,d_3)=1$, and let $B={\rm diag}\,(d_1,d_2,-d_3)$. Then the following are equivalent:
\begin{itemize}
\item[$(a)$]  The module ${\mathcal C}^{\rm for}(B)$ is generated by $Q_1,Q_2,Q_3$ and $I(B)$ admits an algebraically independent generator system.
\item[$(b)$] There exist relatively prime $\ell_1>1$ and $\ell_2>1$ such that
\[
\ell_2\text{ divides } d_1,\,\ell_1\text{ divides } d_2\text{  and  } d_3=\ell_1\ell_2.
\]
\end{itemize}
\end{proposition}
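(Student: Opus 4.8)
The plan is to translate condition $(a)$ into two independent numerical conditions on $d_1,d_2,d_3$ and then match these against $(b)$ by elementary divisibility bookkeeping, using throughout the standing hypothesis $\gcd(d_1,d_2,d_3)=1$. Write $g_{13}:=\gcd(d_1,d_3)$, $g_{23}:=\gcd(d_2,d_3)$, and let $\mu:=(d_1,d_2,-d_3)$ be the eigenvalue tuple of $B$. The starting observation is that a resonant vector monomial $x^me_j$ (one commuting with $B$) with $m_j=0$ can never lie in the $I(B)$-module generated by $Q_1,Q_2,Q_3$; hence by Lemma~\ref{trivresonlylem} the part of $(a)$ asserting that ${\mathcal C}^{\rm for}(B)$ is generated by $Q_1,Q_2,Q_3$ is equivalent to condition \eqref{trivresonly} for $\mu$, while by Remark~\ref{torfinrem} the remaining part of $(a)$ is a statement about the monoid of exponent vectors of $I(B)$.

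First I would dispose of condition \eqref{trivresonly}. For $j=3$ it holds automatically, since $m_1d_1+m_2d_2=(m_3-1)d_3$ with all $m_i\ge 0$ forces $m_3\ge 1$. For $j=1$ the condition can fail only through a resonance $x^me_1$ with $m_1=0$, i.e.\ through a solution of $m_2d_2-m_3d_3=d_1$ in nonnegative integers; since $m_2$ may be enlarged at will (adjusting $m_3$), such a solution exists precisely when the congruence $m_2d_2\equiv d_1\pmod{d_3}$ is solvable, that is, precisely when $g_{23}\mid d_1$. By the symmetric argument the $j=2$ case of \eqref{trivresonly} fails exactly when $g_{13}\mid d_2$. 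So this step shows that the first half of $(a)$ is equivalent to the two non-divisibilities $g_{23}\nmid d_1$ and $g_{13}\nmid d_2$.

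Next I would analyse when $I(B)$ admits an algebraically independent generator system. By Remark~\ref{torfinrem}, $I(B)$ is the monoid algebra of the affine monoid $S:=\{\ell\in\mathbb Z_+^3:\ell_1d_1+\ell_2d_2=\ell_3d_3\}$, which has rank $2$; it admits an algebraically independent generator system iff it is a polynomial ring, necessarily in two variables (its Krull dimension). The cone spanned by $S$ has exactly the two extreme rays $\{\ell_2=0\}$ and $\{\ell_1=0\}$, whose primitive elements in $S$ are $a_{13}:=(d_3/g_{13},0,d_1/g_{13})$ and $a_{23}:=(0,d_3/g_{23},d_2/g_{23})$; these are indecomposable in $S$, so they lie in every minimal monomial generating set, and hence $I(B)$ is a polynomial ring exactly when $S=\mathbb Z_+a_{13}+\mathbb Z_+a_{23}$ (conversely this equality gives $I(B)=\mathbb C[x^{a_{13}},x^{a_{23}}]$, a polynomial ring since $a_{13},a_{23}$ are $\mathbb Q$-linearly independent; cf.\ \cite{CLOS}). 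Projecting to the first two coordinates (which is injective on the plane $\ell_1d_1+\ell_2d_2=\ell_3d_3$) turns this into equality of the lattices $\Lambda:=\{(\ell_1,\ell_2)\in\mathbb Z^2:d_1\ell_1+d_2\ell_2\equiv 0\pmod{d_3}\}$ and $\Lambda':=(d_3/g_{13})\mathbb Z\times(d_3/g_{23})\mathbb Z$. Since $\Lambda'\subseteq\Lambda$, and since $[\mathbb Z^2:\Lambda]=d_3$ (the map $(\ell_1,\ell_2)\mapsto d_1\ell_1+d_2\ell_2$ is onto $\mathbb Z/d_3$ because $\gcd(d_1,d_2,d_3)=1$) while $[\mathbb Z^2:\Lambda']=d_3^2/(g_{13}g_{23})$, the equality $\Lambda=\Lambda'$ holds iff $g_{13}g_{23}=d_3$. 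Thus this step identifies the second half of $(a)$ with the single condition $g_{13}g_{23}=d_3$.

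Finally I would combine: $(a)$ holds exactly when $g_{23}\nmid d_1$, $g_{13}\nmid d_2$ and $g_{13}g_{23}=d_3$. To obtain $(b)$ I set $\ell_1:=g_{23}$ and $\ell_2:=g_{13}$; then $\ell_1\mid d_2$, $\ell_2\mid d_1$, $\ell_1\ell_2=d_3$, $\gcd(\ell_1,\ell_2)\mid\gcd(d_1,d_2,d_3)=1$, and $\ell_1,\ell_2>1$ (for instance $\ell_1=1$ would force $g_{23}\mid d_1$, which is excluded). Conversely, given $(b)$, write $d_1=\ell_2a$ and $d_2=\ell_1b$; the hypothesis $\gcd(d_1,d_2,d_3)=1$ forces $\gcd(\ell_1,a)=\gcd(\ell_2,b)=1$, whence $g_{13}=\ell_2$ and $g_{23}=\ell_1$, so $g_{13}g_{23}=\ell_1\ell_2=d_3$, and the two non-divisibilities follow because $\ell_1,\ell_2>1$ are coprime to one another and to $a,b$. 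The step I expect to be the main obstacle is the invariant-algebra analysis, specifically making rigorous that $S$ needs no ``mixed'' indecomposable generator (one with both $\ell_1>0$ and $\ell_2>0$) precisely when $g_{13}g_{23}=d_3$; the lattice-index comparison above is the cleanest rigorous route I see, but it requires some care with the saturation of $S$ inside its ambient group and with the standard fact that an affine monoid has polynomial monoid algebra exactly when it is free.
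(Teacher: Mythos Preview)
Your argument is correct. The treatment of the module-generator condition is essentially the paper's: both reduce the question to whether $\gcd(d_2,d_3)$ divides $d_1$ (and symmetrically), and under the standing hypothesis $\gcd(d_1,d_2,d_3)=1$ your non-divisibility $g_{23}\nmid d_1$ is equivalent to the paper's conclusion $\gcd(d_2,d_3)>1$.

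Where you genuinely diverge from the paper is in the analysis of $I(B)$. The paper (steps (iii)--(iv)) argues by hand: after writing $d_3=\ell_1\ell_2 d_3^*$, it identifies the two extreme-ray generators $M_1,M_2$ of the monoid, observes they must be the only generators if the algebra is polynomial, and then for $d_3^*>1$ explicitly constructs a monoid element that is \emph{not} a nonnegative integer combination of $M_1,M_2$ (by solving a congruence). Your route is more structural: you note that $S=\mathbb Z_+a_{13}+\mathbb Z_+a_{23}$ is equivalent (via the injective projection to the first two coordinates) to an equality of sublattices of $\mathbb Z^2$, and you settle that equality by the index computation $[\mathbb Z^2:\Lambda]=d_3$ versus $[\mathbb Z^2:\Lambda']=d_3^2/(g_{13}g_{23})$. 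This is shorter and more transparent, and it gives the condition $g_{13}g_{23}=d_3$ in one stroke. The price is that you must invoke the standard (toric) fact that a monoid algebra $\mathbb C[S]$ is a polynomial ring exactly when the affine monoid $S$ is free; the paper's argument avoids this by staying entirely at the level of explicit exponent vectors. Your closing remark correctly flags this as the only point where external input is needed; with that fact granted, the proof is complete.
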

\begin{proof}{The work-intensive part is the implication from $(a)$ to $(b)$. This will be done first, in parts $(i)$ through $(iv)$.}
\begin{enumerate}[$(i)$]
\item We first discuss the conditions on the module generators. The first condition states that the equation
\begin{equation}\label{auxeq3d}
d_2m_2-d_3m_3=d_1
\end{equation}
has no solution in nonnegative integers $m_2,\,m_3$. But if $d_2$ and $d_3$ are relatively prime then such a solution exists: One has a relation $d_2s_2+d_3s_3=d_1$ with integers $s_2,\,s_3$, and more generally $d_2(s_2+k\cdot d_3)+d_3(s_3-k\cdot d_2)=d_1$, with any integer $k$. Now a suitable choice of $k$ shows the existence of positive $m_2$ and $m_3$ { such that \eqref{auxeq3d} holds}, a contradiction.
Conclusion: There is an integer $\ell_1>1$ such that $d_2=\ell_1 d_2^*$ and $d_3=\ell_1 \widetilde d_3$, with ${\rm gcd}\,(d_2^*,\widetilde d_3)=1$. Conversely, if this holds then \eqref{auxeq3d} has no solution in nonnegative integers, due to  ${\rm gcd}\,(d_1,d_2,d_3)=1$.\\
By analogous arguments, the condition that
\[
d_1m_1-d_3m_3=d_2
\]
has no solution in nonnegative integers $m_1, m_3$ implies the existence of an integer $\ell_2>1$ such that $d_1=\ell_2 d_1^*$ and $d_3=\ell_2 \widehat d_3$, with ${\rm gcd}\,(d_1^*,\widehat d_3)=1$. Conversely, if this holds then \eqref{auxeq3d} has no solution in nonnegative integers. From ${\rm gcd}\,(d_1,d_2,d_3)=1$ one has ${\rm gcd}\,(\ell_1,\ell_2)=1$, hence
\[
d_3=\ell_1\ell_2d_3^*.
\]
The third condition that $d_1m_1+d_2m_2=-d_3$
has no solution in nonnegative integers is trivially satisfied. We thus have obtained necessary and sufficient conditions for the module generator property.
\item It is sufficient to consider monomials in $I(B)$, thus nonnegative integer solutions of
\[
0=n_1d_1+n_2d_2-n_3d_3=n_1\ell_2d_1^*+n_2\ell_1d_2^*-n_3\ell_1\ell_2d_3^*.
\]
By relative primeness, we see that $n_1=\ell_1m_1$, $n_2=\ell_2m_2$ with integers $m_1,\,m_2$. Setting $m_3:=n_3$, the above relation is equivalent to
\begin{equation}\label{auxeqtwo3d}
m_1d_1^*+m_2d_2^*-m_3d_3^*=0.
\end{equation}
\item The invarant algebra $I(B)$ admits an algebraically independent generator system if and only if there are two row vectors $M_1,\,M_2$  in $\mathbb Z_+^3$ such that every solution of \eqref{auxeqtwo3d} is a nonnegative integer linear combination of these two. Now there are two distinguished elements of $I(B)$ which correspond to
\[
(d_3^*,\,0,\,d_1^*)\text{  and  } (0,\,d_3^*,d_2^*).
\]
By relative primeness, every solution of \eqref{auxeqtwo3d} with a zero entry is an integer multiple of one of these. Moreover, each is a nonnegative integer linear combination of $M_1$ and $M_2$, which shows that neither $M_i$ can have all entries positive. Conclusion: Up to relabeling, $M_1=(d_3^*,0,d_1^*)$ and $M_2=(0,d_3^*,d_2^*)$.
\item Every nonnegative integer solution of \eqref{auxeqtwo3d} can uniquely be written as
\begin{equation}\label{auxeqthree3d}
\frac{r_1}{s}M_1+\frac{r_2}{s}M_2,
\end{equation}
with ${\rm gcd}\,(r_1,r_2,s)=1$ and $s$ necessarily dividing $d_3^*$. We show that $d_3^*>1$ implies the existence of a solution with $s>1$. Note that in such cases $M_1$ and $M_2$ cannot correspond to an algebraically independent generator system for $I(B)$.\\
Assuming $d_3^*>1$, take $s>1$ as a divisor of $d_3^*$. Then the third entry of the linear combination \eqref{auxeqthree3d} is an integer if and only if
\[
r_1d_1^*+r_2d_2^*\in s\mathbb Z.
\]
We show the existence of $r_1,\,r_2$ satisfying this and ${\rm gcd}\,(r_1,r_2,s)=1$. First, by relative primeness of $d_1^*,\,d_3^*$ there exist positive integers $a,\,b$ such that $as-bd_1^*=1$ (compare the argument in part $(i)$). Multiply by $d_2^*$ and rearrange to obtain
\[
(bd_2^*)\cdot d_1^*+1\cdot d_2^*=(ad_2^*)\cdot s.
\]
Conclusion: In order to admit an algebraically independent generator system for $I(B)$, one needs $d_3^*=1$. {The proof of $(a)\Rightarrow(b)$ is completed.}
\item { The proof of the reverse implication is straightforward: Assume that the conditions in $(b)$ hold, thus $d_1=\ell_2d_1^*$, $d_2=\ell_1d_2^*$. Then $\ell_1$ and $d_1^*$ are relatively prime due to ${\rm gcd}\,(d_1,d_2,d_3)=1$, and likewise $\ell_2$ and $d_2^*$ are relatively prime. \\
A relation of the form
\[
n_2d_2-n_3d_3=d_1\Leftrightarrow n_2\ell_1d_2^*-n_3\ell_1\ell_2=\ell_2d_1^*
\]
with nonnegative integers $n_2,\,n_3$ cannot hold, since $\ell_1$ does not divide the right-hand side. By this and analogous arguments one finds that the $Q_i$ generate the module. Moreover, the exponents of a monomial first integral of $B$ satisfy a relation
\[
n_1\ell_2d_1^*+n_2\ell_1 d_2^*-n_3\ell_1\ell_2=0\Leftrightarrow m_1d_1^*+m_2d_2^*-m_3=0
\]
with $n_1=\ell_1m_1,\, n_2=m_2\ell_2$ and $n_3=m_3$. But this implies
\[
(m_1,\,m_2,\,m_3)=m_1\cdot(1,\,0,\,d_1^*)+m_2\cdot(0,\,1,\,d_2^*),
\]
therefore $I(B)$ is generated by $x_1^{d_3}x_3^{d_1}$ and $x_2^{d_3}x_3^{d_2}$.
}
\end{enumerate}
\end{proof}
\subsection{Outlook}
We close this section with a few remarks on related work and open problems.
\begin{itemize}
\item As noted in the Introduction, Cerveau and Lins Neto \cite{CLN} obtained a rather complete description of the centralizer for local analytic vector fields in dimension two. {(In dimension two, blow-ups can be employed to reduce the problem to non-degenerate stationary points.)} In particular their results imply that the centralizer is generically trivial for stationary points with nilpotent linearization.
\item We discussed the analytic setting (involving convergence questions for PDNF) only in some special cases. The relevance of commuting vector fields and first integrals in convergence questions is well understood; see e.g. Stolovitch \cite{Sto} and Zung \cite{Zu02}.
\item In the formal (in dimension $>2$ also in the analytic) case the principal open question is concerned with triviality of the centralizer of a generic vector field with nilpotent linear part. The tools provided by Lemma \ref{adelemcent} and related results in \cite{WaADE} strongly depend on the special form of quadratic (more generally, of homogeneous polynomial) vector fields, and there seems to be no obvious generalization.
\item Whenever minimal generator systems of the algebra $I(A_s)$ become large, a direct application of the reduction property (see Proposition \ref{nofofinprop}) is no longer feasible. Note that minimal generator systems may be arbitrarily large even in dimension three, as shown by the example $A_s={\rm diag}\,(-12q,\,3,\,1)$, with $q>1$: Any generator system must contain the $4q+1$ monomials corresponding to the integer rows
\[
(1,0,12q),\,(1,1,12q-3),\, (1,2,12q-6),\,\ldots,(1, 4q-1,3),\,(1,4q,0).
\]
\item In the setting of Section \ref{infidimsec} the structure of the normalizer is not completely understood.

\end{itemize}
\section{Jacobi multipliers}\label{jacobisec}
In view of their connection to normalizers, it is natural to include a discussion of Jacobi multipliers in the present paper. We first recall the definitions and some facts, and then show some general properties for PDNF. After discussing some particular examples, we focus on a distinguished setting for which it is shown that generically there exist no formal inverse Jacobi multipliers in dimension 3 and higher. In particular we obtain a complete picture for three dimensional vector fields that satisfy the consitions posed in subsection \ref{subsec42}.
\subsection{General properties}
We recall:
\begin{definition}\label{jacobi}{\em
An analytic function $\psi\not=0$ on an open connected subset $U^*\subseteq U$ is called an {\em inverse Jacobi multiplier $($or inverse Jacobi last multiplier$)$} of \eqref{deq} if
\begin{equation}\label{jaccon}
X_f(\phi)={\rm div}\,f\cdot \phi
\end{equation}
holds on $U^*$, with ${\rm div}\,f={\rm tr}\,Df$.\\
Mutatis mutandis, the definition carries over to local analytic and formal vector fields.
}
\end{definition}
In the situation of this definition, $\phi^{-1}$  is then called a Jacobi (last) multiplier. In the planar case Jacobi multipliers are usually called integrating factors. For more information on Jacobi multipliers, see \cite{WeZh16} and the paper by Berrone and Giacomini \cite{BG} which includes a survey of known facts, some of which we list here (noting that they carry over to the local and formal settings):
\begin{remark}\label{jacobibasics}{\em
\begin{enumerate}[$(a)$]
\item The zero set of an inverse Jacobi multiplier is an invariant set of \eqref{deq}.
\item The quotient of two Jacobi multipliers is a first integral of \eqref{deq}.
\item If system \eqref{deq} admits $n-2$ independent first integrals $\theta_1,\ldots,\theta_{n-2}$ and a Jacobi multiplier  $\phi$ on $U^*\subseteq U$ then a further first integral may be constructed from these by quadrature; thus $f$ is completely integrable.
\item Jacobi multipliers from normalizing vector fields: Let $g_1,\ldots,g_{n-1}$ be analytic vector fields on $U^*\subseteq U$ such that
\[
\left[g_i,\,f\right] =\lambda_i\cdot f,\quad 1\leq i\leq n-1
\]
with analytic functions $\lambda_i$. Then
\[
\psi(x):=\det(f(x),g_1(x),\ldots,g_{n-1}(x)),
\]
assuming that $\psi\not=0$, is an inverse Jacobi multiplier of system \eqref{deq}.
\end{enumerate}
}
\end{remark}

The following result on PDNF, although quite straightforward, does not seem to be available in the literature.
\begin{proposition}\label{divnofoprop} Let $f=A_s+\cdots$ be in PDNF. Then:
\begin{enumerate}[$(a)$]
\item The divergence ${\rm div}\, f$ is a first integral of $A_s$.
\item If $\psi$ is an inverse Jacobi multiplier of $f$, then $X_{A_s}(\psi)={\rm div}\,A_s\cdot\psi$.
\end{enumerate}
\end{proposition}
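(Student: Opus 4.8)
The plan is to establish part $(a)$ and feed it into part $(b)$. For $(a)$ the quickest route is the elementary identity
\[
{\rm div}\,[g,h]=X_g({\rm div}\,h)-X_h({\rm div}\,g),
\]
valid for arbitrary (formal) vector fields $g,h$; it follows from a one-line computation in which the two bilinear terms $\sum_{i,j}\partial_jh_i\,\partial_ig_j$ cancel after relabelling the summation indices. Applying it with $g$ the linear field $x\mapsto A_sx$ and $h=f$, the normal-form condition $[A_s,f]=0$ kills the left-hand side, while ${\rm div}\,(A_sx)={\rm tr}\,A_s$ is constant, so $X_f({\rm div}\,A_s)=0$; hence $X_{A_s}({\rm div}\,f)=0$. (In eigencoordinates this is also immediate: by Lemma \ref{resonancelem} every vector monomial $x^me_j$ occurring in $f$ satisfies $\langle m,\lambda\rangle-\lambda_j=0$, so every monomial $m_jx^{m-e_j}$ occurring in ${\rm div}\,f$ has $\langle m-e_j,\lambda\rangle=0$ and is annihilated by $X_{A_s}$.)

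For part $(b)$ I would run a degree-by-degree induction in the spirit of the proof of Proposition \ref{lincentprop}. Write $A=A_s+A_n$, put $\delta:={\rm div}\,f=\delta_0+\delta_1+\cdots$ with $\delta_0={\rm div}\,A_s={\rm tr}\,A_s$, and record two consequences of the hypotheses: by $(a)$ each homogeneous component satisfies $X_{A_s}(\delta_j)=0$, and since $[A_s,f_j]=0$ the operators $X_{A_s}$ and $X_{f_j}$ commute by \eqref{bracket}. Write $\psi=\psi_s+\psi_{s+1}+\cdots$ with $\psi_s\neq0$. Expanding the defining relation $X_f(\psi)={\rm div}\,f\cdot\psi$ into homogeneous components of degree $s+k$ yields, for every $k\ge0$, a relation
\[
(X_A-\delta_0)\,\psi_{s+k}=R_k,\qquad R_k:=\sum_{i=0}^{k-1}\bigl(\delta_{k-i}\psi_{s+i}-X_{f_{k+1-i}}(\psi_{s+i})\bigr),
\]
with $R_0=0$ and all terms living in $S_{s+k}$. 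I claim $X_{A_s}(\psi_{s+k})=\delta_0\psi_{s+k}$ for every $k$, by induction on $k$. Granting the claim for all indices below $k$, the two recorded facts give $X_{A_s}(R_k)=\delta_0R_k$, i.e.\ $R_k$ lies in the $\delta_0$-eigenspace of $X_{A_s}$ in $S_{s+k}$. Now invoke Lemma \ref{jochlem}: on $S_{s+k}$ the splitting $X_A-\delta_0=(X_{A_s}-\delta_0)+X_{A_n}$ is a Jordan--Chevalley decomposition, so $S_{s+k}=\ker(X_{A_s}-\delta_0)\oplus V'$ with $V'$ the sum of the remaining $X_{A_s}$-eigenspaces; both summands are $(X_A-\delta_0)$-invariant, and $X_A-\delta_0$ is nilpotent on $\ker(X_{A_s}-\delta_0)$ and — being an invertible operator plus a commuting nilpotent one — invertible on $V'$. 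Decomposing $\psi_{s+k}$ accordingly and comparing $V'$-components in $(X_A-\delta_0)\psi_{s+k}=R_k$ (whose $V'$-component vanishes) forces the $V'$-part of $\psi_{s+k}$ to be zero, so $\psi_{s+k}\in\ker(X_{A_s}-\delta_0)$. Summing over $k$ gives $X_{A_s}(\psi)=\delta_0\psi={\rm div}\,A_s\cdot\psi$.

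The only subtle point I anticipate is exactly this last linear-algebra step inside the induction — recovering $\psi_{s+k}\in\ker(X_{A_s}-\delta_0)$ from $(X_A-\delta_0)\psi_{s+k}\in\ker(X_{A_s}-\delta_0)$ — which rests on the Jordan--Chevalley structure of $X_A$ on the spaces $S_k$ (Lemma \ref{jochlem}) and on invertibility of $X_A-\delta_0$ off the $\delta_0$-eigenspace of $X_{A_s}$; the rest is bookkeeping of homogeneous parts. I would note at the outset that $\psi\neq0$ makes $s$ and $\psi_s$ well defined and that the case $s=0$ needs no separate treatment (then $R_0=0$ already places $\psi_0$ in $\ker(X_{A_s}-\delta_0)$). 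One shortcut is worth mentioning: a direct computation shows that $X_{A_s}(\psi)$ again satisfies the inverse Jacobi multiplier equation of $f$, so $X_{A_s}(\psi)/\psi$ is a formal first integral of $f$; but identifying this integral as the constant ${\rm tr}\,A_s$ seems to require the component-wise analysis above, so I would carry out the induction directly.
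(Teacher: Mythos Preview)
Your proof is correct. For part $(a)$ your primary route via the identity ${\rm div}\,[g,h]=X_g({\rm div}\,h)-X_h({\rm div}\,g)$ is cleaner than the paper's argument: the paper works monomial by monomial in eigencoordinates (essentially your parenthetical remark) and then has to append a conjugation argument to cover the case when $A_s$ is not already diagonal, whereas your bracket identity is coordinate-free and handles all cases at once. For part $(b)$ the paper does not give an argument in situ but simply cites the second half of the proof of Lemma~2.2 in \cite{Wa00}; your degree-by-degree induction, modelled on the proof of Proposition~\ref{lincentprop} and using the Jordan--Chevalley splitting of $X_A-\delta_0$ on each $S_{s+k}$, is precisely the natural self-contained argument and goes through as written. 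The linear-algebra step you flag as subtle is indeed the heart of it, and your justification (invertibility of $X_A-\delta_0$ on the complement $V'$ of $\ker(X_{A_s}-\delta_0)$, since it is a commuting sum of an invertible semisimple and a nilpotent operator there) is correct.
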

\begin{proof}
To prove part $(a)$, we first assume that $A_s$ is in diagonal form \eqref{diagmat}. Then it is sufficient to prove part $(a)$ for every vector monomial $x_1^{m_1}\cdots x_n^{m_n}e_j$ that commutes with $A_s$, thus satisfies the resonance condition \eqref{resonance}. The divergence of this vector field is equal to
\[
\frac{\partial}{\partial x_j}\left(x_1^{m_1}\cdots x_n^{m_n}\right)=m_j\cdot x_1^{m_1}\cdots x_j^{m_j-1}\cdots x_n^{m_n}.
\]
In case $m_j=0$ the asssertion is obvious. In case $m_j>0$ the resonance condition may be rewritten as
\[
m_1\lambda_1+\cdots+(m_j-1)\lambda_j+\cdots+m_n\lambda_n=0,
\]
equivalently $x_1^{m_1}\cdots x_j^{m_j-1}\cdots x_n^{m_n}$ is a first integral of $A_s$.\\
In general, a vector field in PDNF will have the form $f^*(x)=T^{-1}f(Tx)$ with an invertible matrix $T$. This implies the identities
\[
Df^*(x)=T^{-1}Df(Tx)T \text{  and  }{\rm tr}\,Df^*(x)={\rm tr}\,Df(Tx)
\]
by conjugacy of the the Jacobian matrices. Now the semisimple part of $T^{-1}AT$ is equal to  $T^{-1}A_sT$, and one finds with $\theta(x):={\rm div}\,f(x)$
\[
X_{T^{-1}A_sT}(\theta)(Tx)=D\theta(Tx)TT^{-1}A_sTx=X_{A_s}(\theta)(Tx)=0,
\]
hence the divergence of $f^*$ is a first integral for $T^{-1}A_sT$, as asserted.
\\
Part $(b)$ follows with the second part in the proof of \cite{Wa00}, Lemma 2.2.
\end{proof}
\begin{remark}{\em
Part $(b)$ of the Proposition states that $\psi$ is an element of the $I(A_s)$-module $I_{{\rm div}\,A_s}(A_s)$, which is finitely generated by Lemma \ref{torfinlem}. In particular, whenever ${\rm div}\,A_s=0$ then an inverse Jacobi multiplier corresponds to a semi-invariant of the system reduced by invariants of $A_s$.
}
\end{remark}
\subsection{Examples}
We first record the simplest cases, for the sake of completeness. For the proof of uniqueness, note part $(b)$ of Remark \ref{jacobibasics}.
\begin{proposition}
Let $A={\rm diag}(\lambda_1,\ldots,\lambda_n)$ with the $\lambda_i$ linearly independent over $\mathbb Q$. Then (up to scalar multiples) the only inverse Jacobi multiplier of $\dot x=Ax$ $($the general PDNF with linear part $A$$)$ is $\sigma=x_1\cdots x_n$.
\end{proposition}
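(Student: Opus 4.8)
The plan is to observe first that under the stated hypothesis there are no nontrivial resonances, so that $\dot x = Ax$ really is the only Poincar\'e--Dulac normal form with linear part $A$, and then to solve the defining equation \eqref{jaccon} directly in the ring of formal power series.

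First I would note that, since $\lambda_1,\dots,\lambda_n$ are linearly independent over $\mathbb{Q}$, they are pairwise distinct and no resonance condition \eqref{resonance} holds with $|m|>1$ (such a condition would be a nontrivial integer relation $\sum_i(m_i-\delta_{ij})\lambda_i=0$). Hence by Lemma \ref{resonancelem}, equivalently Proposition \ref{simplecase}, the only vector field in PDNF with $A_s=A$ is $f=Ax$ itself, which justifies the parenthetical remark in the statement. In particular $A_n=0$ and $\operatorname{div} f=\operatorname{tr} A=\lambda_1+\cdots+\lambda_n=:\lambda^\ast$, a constant.

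Next, the inverse Jacobi multiplier condition \eqref{jaccon} reduces to $X_A(\psi)=\lambda^\ast\,\psi$ (this is also immediate from Proposition \ref{divnofoprop}(b)). Writing $\psi=\sum_{k\in\mathbb{Z}_+^n}c_k\,x^k$ as a formal power series and using $X_A(x^k)=\langle k,\lambda\rangle\,x^k$, the condition is equivalent to $c_k\bigl(\langle k,\lambda\rangle-\lambda^\ast\bigr)=0$ for every multi-index $k$, i.e.\ $c_k=0$ unless $\sum_i(k_i-1)\lambda_i=0$. By linear independence of the $\lambda_i$ over $\mathbb{Q}$ this forces $k_i=1$ for all $i$, so $\psi=c_{(1,\dots,1)}\,x_1\cdots x_n$. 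Conversely $X_A(x_1\cdots x_n)=\bigl(\sum_i\lambda_i\bigr)x_1\cdots x_n=\operatorname{div} f\cdot(x_1\cdots x_n)$, so $\sigma=x_1\cdots x_n$ is an inverse Jacobi multiplier and every one is a scalar multiple of it. Since this unique formal solution is a polynomial, hence analytic, the same conclusion holds in the local analytic setting.

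There is essentially no hard step here; the only point requiring care is to carry out the coefficient comparison over all of $\mathbb{C}[[x_1,\dots,x_n]]$ rather than merely over the polynomial ring, and to record that this single argument settles the formal and the analytic case at once. (Alternatively, uniqueness up to scalars also follows from Remark \ref{jacobibasics}(b) together with the absence of nonconstant first integrals of $\dot x=Ax$ guaranteed by Lemma \ref{fireslem} under the $\mathbb{Q}$-independence hypothesis, but the direct computation above is shorter.)
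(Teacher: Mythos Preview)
Your proof is correct. The paper's own argument is even terser: it records only that uniqueness follows from Remark \ref{jacobibasics}(b) (the quotient of two inverse Jacobi multipliers is a first integral), which together with Lemma \ref{fireslem} forces any two multipliers to differ by a constant factor; existence of $\sigma=x_1\cdots x_n$ is left implicit. Your direct coefficient comparison in $\mathbb C[[x_1,\dots,x_n]]$ is a self-contained alternative that yields existence and uniqueness in one stroke, and you even mention the paper's route as an aside. Either approach is fine; yours has the mild advantage of not needing to appeal to the first-integral characterization.
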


Turning to a more interesting setting, from Remark \ref{jacobibasics} and Lemma \ref{estprep}$(a)$ we get immediately:
\begin{proposition} Given the setting in Proposition \ref{singleprop} {with $k$ minimal} such that { $X_{U(_k}(\psi)\not=0$}, the vector field $f$ admits an inverse Jacobi multiplier of the form
\[
{\det(f(x), C_1x,\ldots,C_{n-1}x) =x_1\cdots x_n\ \psi(x)^k\cdot(c + {\text{\rm  h.o.t.}}).}
\]
with a nonzero constant $c$.
\end{proposition}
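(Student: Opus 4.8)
The plan is to apply Remark \ref{jacobibasics}(d) to the $n-1$ linear vector fields $g_i := C_i x$ ($1\le i\le n-1$) and then to evaluate the resulting determinant explicitly, using that $f$ and all the $C_i x$ are ``diagonal'' vector fields in the eigencoordinates of $A_s$. By Proposition \ref{singleprop}(b) each $C_i x$ lies in $\mathcal{C}_0^{\rm for}(f)$, hence $[C_i x,\,f] = 0 = 0\cdot f$; consequently, once we show that
\[
\Delta(x) := \det\bigl(f(x),\,C_1 x,\,\ldots,\,C_{n-1}x\bigr)
\]
is not the zero series, Remark \ref{jacobibasics}(d) (which carries over to the formal setting) yields that $\Delta$ is a formal inverse Jacobi multiplier of $f$. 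So it suffices to compute $\Delta$ and obtain the stated factorization.

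For the computation, write $A_s = {\rm diag}(\lambda_1,\ldots,\lambda_n)$, $U_j = {\rm diag}(u_{j,1},\ldots,u_{j,n})$, $C_i = {\rm diag}(c_{i,1},\ldots,c_{i,n})$, and set $\gamma_j(x) := \lambda_j + \sum_{i\ge 1}\psi(x)^i u_{i,j}$, so that the $j$th component of $f$ is $\gamma_j(x)\,x_j$ and the $j$th component of $C_i x$ is $c_{i,j}\,x_j$. Factoring $x_j$ out of the $j$th row of the matrix $\bigl(f(x),C_1 x,\ldots,C_{n-1}x\bigr)$ and expanding along the first column gives
\[
\Delta(x) = x_1\cdots x_n\cdot\sum_{j=1}^n \gamma_j(x)\,\delta_j ,
\]
where $\delta_j$ is $(-1)^{j+1}$ times the minor of the constant $n\times(n-1)$ matrix $D$ (whose columns are the diagonals of $C_1,\ldots,C_{n-1}$) obtained by deleting row $j$; equivalently $\sum_j v_j\delta_j = \det(v\mid D)$ for $v\in\mathbb C^n$, where $\det(v\mid D)$ denotes the determinant of the $n\times n$ matrix with first column $v$ and remaining columns those of $D$.

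The crux is to identify the linear functional $v\mapsto\det(v\mid D)$. Its kernel is the span of the columns of $D$, which are linearly independent by Lemma \ref{estprep}(a); and since the space of diagonal matrices annihilating $\psi$ is an $(n-1)$-dimensional subspace contained (by Lemma \ref{estprep}(d)) in that span, it coincides with it, so in particular each $C_i$ satisfies $X_{C_i}(\psi) = 0$. Therefore the kernel equals $\{v : \langle d,v\rangle = 0\}$ with $d=(d_1,\ldots,d_n)$, and $\det(v\mid D) = \kappa\,\langle d,v\rangle$ for a nonzero constant $\kappa$. Applying this to $v=(\lambda_1,\ldots,\lambda_n)$ gives $\sum_j\lambda_j\delta_j = \kappa\sum_j d_j\lambda_j = 0$ by \eqref{fullfires}; applying it to $v=(u_{i,1},\ldots,u_{i,n})$ gives $\sum_j u_{i,j}\delta_j = \kappa\sum_j d_j u_{i,j}$, which (since $X_{U_i}(\psi) = \psi\sum_j d_j u_{i,j}$ and $\kappa\ne 0$) vanishes exactly when $X_{U_i}(\psi)$ does. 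Substituting into the formula for $\Delta$,
\[
\Delta(x) = x_1\cdots x_n\cdot\sum_{i\ge 1}\Bigl(\sum_j u_{i,j}\delta_j\Bigr)\psi(x)^i = x_1\cdots x_n\,\psi(x)^k\bigl(c+\text{h.o.t.}\bigr),
\]
where $k$ is the minimal index with $X_{U_k}(\psi)\ne 0$ and $c=\sum_j u_{k,j}\delta_j\ne 0$; the tail $\sum_{i>k}(\cdots)\psi^{i-k}$ consists of higher order terms in $x$ because $\psi$ is a monomial of positive degree. This is the asserted form, and in particular $\Delta\ne 0$, which finishes the argument.

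I do not anticipate a genuine obstacle. The one step requiring care is the identification of $\ker(v\mapsto\det(v\mid D))$ with the hyperplane $\{\langle d,v\rangle = 0\}$, which rests on Lemma \ref{estprep}(d) together with a dimension count; the signs $\delta_j$ in the cofactor expansion need only be tracked up to the harmless overall constant $c$.
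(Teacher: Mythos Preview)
Your proof is correct and follows exactly the approach the paper indicates: the paper's own proof is the single sentence ``from Remark \ref{jacobibasics} and Lemma \ref{estprep}(a) we get immediately'', and you have supplied the determinant computation and the identification of the linear functional $v\mapsto\det(v\mid D)$ with $\kappa\langle d,v\rangle$ that the paper leaves to the reader. One cosmetic point: the fact that each $C_ix$ lies in $\mathcal{C}^{\rm for}(f)$ is more elementary than Proposition \ref{singleprop}(b) (it follows directly from Lemma \ref{estprep}(b), since every resonant monomial for $A_s$ is resonant for each $C_i$), but this is only a matter of citation.
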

In particular this result takes care of three dimensional vector fields $f(x)=Ax+\cdots$ when the eigenvalues of $A$ span a two dimensional vector space over $\mathbb Q$.\\

We next discuss in some detail the three dimensional vector field
\begin{equation}\label{ifaceq}
f(x)=\begin{pmatrix} 1&0&0\\
0&-1&0\\
0&0&0\end{pmatrix}\cdot x+\begin{pmatrix}\alpha_1 x_1x_3\\
                                                                          \alpha_2 x_2x_3\\
                                                                         \alpha_3 x_3^2+\alpha_4 x_1x_2\end{pmatrix} +\cdots=Ax+\sum f_j(x)
\end{equation}
in Poincar\'e-Dulac normal form. Note that the homogeneous term of degree two is of the most general form, with
\[
{\rm div}\,f(x)=(\alpha_1+\alpha_2+2\alpha_3)\,x_3+\cdots.
\]
This represents a case in dimension three with the eigenvalues spanning a one dimensional vector space over $\mathbb Q$.
We will see by this example that Jacobi multipliers may exist even if they cannot be constructed with centralizer elements according to Remark \ref{jacobibasics}$(c)$.  Moreover we show that generically this vector field does not admit a formal inverse Jacobi multiplier.
\begin{proposition}\label{ifacprop}
Let $f$ be given by \eqref{ifaceq}, and assume that $\alpha_1,\,\alpha_2,\,\alpha_3,\,\alpha_4$ are linearly independent over the rational number field $\mathbb Q$, and $\alpha_3=1$ with no loss of generality $($by scaling $x\mapsto \rho\cdot x$$)$. Then:
\begin{enumerate}[$(a)$]
\item The dimension of $\mathcal C^{\rm for}(f)$ is equal to 2.
\item The quadratic system $f=A+f_2$ admits a unique inverse Jacobi multiplier $($up to scalar multiples$)$
\[
\psi(x)=x_1x_2x_3^2+\beta^*x_1^2x_2^2,\quad\text{with  } \beta^*:=\frac{2\alpha_4}{2-\alpha_1-\alpha_2}.
\]
\item For a Zariski open and dense subset in the coefficient space for $f_3$, the vector field $f$ does not admit an inverse Jacobi multiplier.
\end{enumerate}
\end{proposition}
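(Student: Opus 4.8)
The plan is to treat the three assertions in the order $(b)$, $(a)$, $(c)$, since the computation in $(b)$ is the structural engine for $(c)$ and $(a)$ relies on the same reduction by invariants. For $(b)$: by Proposition~\ref{divnofoprop}$(b)$, an inverse Jacobi multiplier $\psi$ of a PDNF with $A_s={\rm diag}(1,-1,0)$ satisfies $X_{A_s}(\psi)={\rm div}\,A_s\cdot\psi=0$, so $\psi=\Psi(\phi_1,\phi_2)$ is a formal series in the monomial invariants $\phi_1=x_1x_2$, $\phi_2=x_3$ of $A_s$. For the quadratic field $f=A+f_2$ one computes $X_f(\phi_1)=(\alpha_1+\alpha_2)\phi_1\phi_2$, $X_f(\phi_2)=\alpha_3\phi_2^2+\alpha_4\phi_1$ and ${\rm div}\,f=(\alpha_1+\alpha_2+2\alpha_3)\phi_2$, so \eqref{jaccon} becomes a single first order linear PDE for $\Psi$. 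Grading by total $x$-degree (thus $\deg\phi_1=2$, $\deg\phi_2=1$), each homogeneous part of $\Psi$ satisfies the PDE separately, and within a fixed degree the PDE is a triangular linear recursion whose diagonal entries have the form $(\alpha_1+\alpha_2)(1-i)+\alpha_3(2-j)$ (on the coefficient of $\phi_1^i\phi_2^j$), together with a scalar obstruction $(\alpha_1+\alpha_2)+(2-D)\alpha_3$ in each degree $D$. By $\mathbb Q$-independence of $\alpha_1,\ldots,\alpha_4$ (with $\alpha_3=1$) all these are nonzero for integer $i,j,D$ except precisely at $(i,j)=(1,2)$, which sits in degree $D=4$. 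Reading off the recursion then shows that nonzero solutions occur only in degree $4$, form a one-dimensional space there, and are spanned by $x_1x_2x_3^2+\beta^*x_1^2x_2^2$ with $\beta^*$ as stated; uniqueness up to scalars follows because every homogeneous part of an inverse Jacobi multiplier is again one or zero.

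For $(a)$: the bound $\dim\,\mathcal C^{\rm for}(f)\ge 2$ is immediate, since $f$ and $A_sx$ are $\mathbb C$-independent and lie in $\mathcal C^{\rm for}(f)$ (using $[A_s,f]=0$; it also follows from Theorem~\ref{lowerest} with $q=1$). For the converse let $g\in\mathcal C^{\rm for}(f)\subseteq\mathcal C^{\rm for}(A_s)$. Its linear part is diagonal, and evaluating $[g,f]=0$ at degree~$2$ forces it to be a multiple of $A_s$; after subtracting that multiple of $A_sx$ we may assume $g$ has no linear part, and the degree-$3$ evaluation shows the quadratic part is a multiple of $f_2$. To organize the rest one passes to the reduction of $f$ by $\Phi=(\phi_1,\phi_2)$ via Corollary~\ref{reducomm}: the reduced planar field is $\widehat f=\widehat f_*+\text{h.o.t.}$ with cusp-type quasi-homogeneous leading part $\widehat f_*(y)=\bigl((\alpha_1+\alpha_2)y_1y_2,\ \alpha_4y_1+\alpha_3y_2^2\bigr)$ (weights $w(y_1)=2$, $w(y_2)=1$). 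Using the $\mathbb Q$-independence one checks that the only quasi-homogeneous vector fields commuting with $\widehat f_*$ are the scalar multiples of $\widehat f_*$, and that $\widehat f_*$ admits no nonconstant quasi-homogeneous first integral; lifting, $\widehat f$ has trivial formal centralizer and no nonconstant formal first integral. Since a centralizer element of $f$ whose $\Phi$-reduction vanishes must have the form $(\text{formal first integral of }f)\cdot A_sx$, one concludes $\mathcal C^{\rm for}(f)=\mathbb C f\oplus\mathbb C A_sx$, of dimension~$2$.

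For $(c)$: let $\psi$ be a formal inverse Jacobi multiplier of the full $f=A+f_2+f_3+\cdots$ of \eqref{ifaceq}. As in $(b)$, $\psi=\Psi(\phi_1,\phi_2)$, and expanding $X_f(\phi_1)$, $X_f(\phi_2)$ and ${\rm div}\,f$ in $\phi_1,\phi_2$ (legitimate since all lie in $I(A_s)$) turns \eqref{jaccon} into a PDE whose lowest-degree part is exactly the quadratic-system PDE of $(b)$. Hence the lowest homogeneous part of $\psi$ sits in degree~$4$ and is a nonzero multiple of $\psi_0:=x_1x_2x_3^2+\beta^*x_1^2x_2^2$; normalize it to $\psi_0$. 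The degree-$6$ part of the PDE then reads $L(\Psi_5)=R_6$, where $L:\mathbb C^3\to\mathbb C^4$ is the quadratic-system Jacobi operator on degree-$5$ invariants (injective, since by $(b)$ there are no degree-$5$ solutions) and $R_6$ is an explicit degree-$6$ element depending \emph{linearly} on the six coefficients of $f_3$ (built from $\psi_0$, ${\rm div}\,f_3$ and the degree-$3$ part of $\widehat f$, and unaffected by $f_4,f_5,\ldots$). Solvability for $\Psi_5$ requires $\ell_6(R_6)=0$, where $\ell_6$ spans the one-dimensional cokernel of $L$. Computing $\ell_6$ and $R_6$ explicitly, $\ell_6(R_6)$ turns out to be a \emph{nonzero} linear form on the coefficient space of $f_3$ — one of its coefficients is a nonzero multiple of $\alpha_3(3\alpha_3-\alpha_1-\alpha_2)$, which does not vanish by hypothesis. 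Its zero locus is therefore a proper hyperplane, and for $f_3$ in the (Zariski open and dense) complement no $\Psi_5$ exists, so $f$ admits no formal inverse Jacobi multiplier.

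The main obstacle is the last computation in $(c)$: one must actually compute the cokernel functional $\ell_6$ of the degree-$5$-to-$6$ Jacobi operator together with the value $\ell_6(R_6)$, and verify that the resulting linear form is not identically zero — this is exactly what makes the nonexistence generic rather than vacuous. A secondary technical point is the cusp-type field $\widehat f_*$ in $(a)$: because its linear part is nilpotent, triviality of its centralizer and absence of first integrals fall outside the general results of Section~\ref{findimsec} and must be obtained by a direct (finite) weighted-homogeneous computation exploiting the $\mathbb Q$-independence of the $\alpha_k$.
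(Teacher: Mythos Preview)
Your plan is correct, and its overall architecture matches the paper's; the differences lie in execution, chiefly in parts $(b)$ and $(c)$.

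For part $(b)$ the paper does not analyse the recursion on $\Psi(\phi_1,\phi_2)$ directly. Instead it invokes Lemma~\ref{adelem} at the point $c=e_3$, which satisfies $f_2(c)=c$ with $Df_2(c)={\rm diag}(\alpha_1,\alpha_2,2)$. The lemma produces nonnegative integers $k,k_1,k_2,k_3$ with $k_1\alpha_1+k_2\alpha_2+2k_3+k=\alpha_1+\alpha_2+2$ and $k+k_1+k_2+k_3=r$; rational independence of the $\alpha_i$ then forces $k_1=k_2=1$ and $k+2k_3=2$, so $r\in\{3,4\}$. The case $r=3$ is excluded by the additional information $k=0$ (whence $D\phi_3(x)c=0$), and the degree-$4$ ansatz is solved directly for $\beta^*$. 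Your recursion argument reaches the same endpoint and is a legitimate alternative; the paper's route trades the bookkeeping of the triangular recursion for one invocation of the semi-invariant lemma.

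For part $(c)$ the paper avoids computing the cokernel functional $\ell_6$ altogether. It simply \emph{exhibits one} cubic term, $f_3(x)=(0,0,x_3^3)^\tau$, and checks by hand that with this choice the degree-$6$ condition \eqref{ifcond3} for $\phi_5=\theta_1x_3^5+\theta_2x_1x_2x_3^3+\theta_3x_1^2x_2^2x_3$ is inconsistent: the four scalar equations force $\theta_1=\theta_3=0$, then $\theta_2=1$, and finally the contradiction $\alpha_4=\beta^*$. Since inconsistency of a parameter-dependent inhomogeneous linear system is a Zariski-open condition on the parameters, nonemptiness at one point yields the generic statement. This is lighter than your proposal to compute $\ell_6$ and the full linear form $\ell_6(R_6)$ in the six coefficients of $f_3$, and it sidesteps the need to identify which coefficient of that form is nonzero.

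For part $(a)$ the paper just cites \cite{Wa00b}, Example~4. Your self-contained reduction argument via $\widehat f_*(y)=\bigl((\alpha_1+\alpha_2)y_1y_2,\ \alpha_4y_1+y_2^2\bigr)$ is a reasonable replacement. One point worth flagging: since $\lambda_3=0$, Lemma~\ref{trivresonlylem} does not apply and the module $\mathcal C^{\rm pol}(A_s)$ has the extra generator $e_3$; you handle this correctly when identifying the kernel of the reduction map with $\{\eta\cdot A_sx:\eta\in I(A_s)\}$, but it is the step where a careless reader might slip.
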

\begin{remark} {\em The assumption on the linear independence of the $\alpha_i$ over $\mathbb Q$ is satisfied for all $\alpha=(\alpha_1,\ldots,\alpha_4)^\tau$ in the complement of a measure zero subset of $\mathbb C^4$. In this sense the assumption on the $\alpha_i$ in the Proposition is generic.
}
\end{remark}
\begin{proof}[Proof of Proposition \ref{ifacprop}] Part $(a)$ follows from \cite{Wa00b}, Example 4. The proof of part $(b)$ requires several steps.
\begin{enumerate}[$(i)$]
\item We assume the existence of an inverse Jacobi multiplier
\[
\phi=\phi_r+\cdots,\quad \phi_r\not=0.
\]
Evaluating the condition $X_f(\phi)={\rm div}\,f\cdot\phi$ for the lowest degrees, with ${\rm div}\,A=0$ one gets
\begin{align*}
X_A(\phi_r)&=0;\\
X_{f_2}(\phi_r)+X_A(\phi_{r+1})&=(\alpha_1+\alpha_2+2)\,x_3\phi_r;\\
X_{f_3}(\phi_r)+X_{f_2}(\phi_{r+1})+X_A(\phi_{r+2})&=(\alpha_1+\alpha_2+2)x_3\,\phi_{r+1}+ {\rm div}\,f_3\,\phi_r.
\end{align*}
From Proposition \ref{divnofoprop} we know that all $X_A(\phi_{r+j})=0$, and there remains
\begin{align}
X_{f_2}(\phi_r)&=(\alpha_1+\alpha_2+2)\,x_3\phi_r;\label{ifcond2}\\
X_{f_3}(\phi_r)+X_{f_2}(\phi_{r+1})&=(\alpha_1+\alpha_2+2)x_3\,\phi_{r+1}+ {\rm div}\,f_3\,\phi_r.\label{ifcond3}
\end{align}
\item We evaluate equation \eqref{ifcond2}. For $c:=e_3$ we have $f_2(c)=c$ and $Df_2(c)={\rm diag}\,(\alpha_1,\alpha_2,2)$, thus Lemma \ref{adelem} from the Appendix is applicable, and yields
\begin{align*}
k_1\alpha_1+k_2\alpha_2+2k_3+k&=\alpha_1+\alpha_2+2;\\
k_1+k_2+k_3+k&=r.
\end{align*}
Since the $\alpha_i$ are linearly independent over the rationals, we get $k_1=k_2=1$ and $k+2k_3=2$. This leaves only two possibilities:
\[
r=3\text{  and  }k=0, \text{  or  } r=4\text{  and  } k=2.
\]
\item The algebra of polynomial first integrals of $\dot x=Ax$ is generated by $\psi_1:=x_1x_2$ and $\psi_2:=x_3$ (see Lemma \ref{fireslem} and the subsequent Remark), so $\phi_r$ is a polynomial in $\psi_1$ and $\psi_2$.
\begin{itemize}
\item Assume that $r=3$ and $k=0$, then we have
\[
\phi_3=\beta_1x_3^3+\beta_2x_1x_2x_3
\]
with constants $\beta_i$, and $D\phi_3(x)c=0$ due to $k=0$. But this implies that $3\beta_1x_3^2+\beta_2x_1x_2=0$, hence $\beta_1=\beta_2=0$ and the contradiction $\phi_3=0$. We conclude that this case cannot occur.
\item In case $r=4$ we have
\[
\phi_4=\beta_1x_3^4+\beta_2x_1x_2x_3^2+\beta_3x_1^2x_2^2
\]
with constants $\beta_i$. From $k=2$ we obtain\footnote{{See Lemma \ref{adelem} for the notation used in the following.}}
\[
0=D^3\phi_4(x)(c,c,c)=24\beta_1x_3,
\]
 hence $\beta_1=0$, and then $0\not=D^2\phi(x)(c,c)=2\beta_2x_1x_2$ shows that necessarily $\beta_2\not=0$.

\end{itemize}
\item Thus, up to a nonzero scalar factor,
\[
\phi_4=x_1x_2x_3^2+\beta x_1^2x_2^2.
\]
Evaluating the condition
\[
X_{f_2}(\phi_4)=(\alpha_1+\alpha_2+2)\,x_3\phi_4
\]
for degree two by straightforward computations, one finds that it is satisfied if and only if $\beta=\beta^*$. The proof of part $(b)$ is complete.
\item We now consider $f$ with the special cubic term
\[
f_3(x)=\begin{pmatrix} 0\\ 0\\ x_3^3\end{pmatrix},
\]
make the ansatz
\[
\phi_5=\theta_1x_3^5+\theta_2x_1x_2x_3^3+\theta_3x_1^2x_2^2x_3
\]
and evaluate equation \eqref{ifcond3}. Straightforward computation and comparison of coefficients yields
\begin{align*}
\text{for  } x_1^2x_2^2x_3^2:&\quad (\alpha_1+\alpha_2-1)\theta_3+3\alpha_4\theta_2=3\beta^*;\\
\text{for  } x_1x_2x_3^4:&\quad 5\alpha_4\theta_1+\theta_2=1;\\
\text{for  } x_1^2x_2^2x_3^2:&\quad 5\theta_1=(\alpha_1+\alpha_2+2)\theta_1;\\
\text{for  } x_1^2x_2^2x_3^2:&\quad \alpha_4\theta_3=0.
\end{align*}
The last two equations directly show that $\theta_1=\theta_3=0$ due to the linear indepencdence of the $\alpha_i$, and there remains $\theta_2=1$ from the second equation, which leads to the contradiction $\alpha_4=\beta^*$ in the first equation.
\item We have shown that equation \eqref{ifcond3} has no solution when the cubic term has the special form $(0,0,x_3^3)^\tau$. Generally \eqref{ifcond3} may be viewed as an inhomogeneous linear system of equations for the coefficients of $\phi_5$, with the coefficients of $f_3$ as parameters. Since the system has no solution for a special choice of $f_3$, it has no solution for all $f_3$ with coefficients in a Zariski-open and dense subset of coefficient space.
\end{enumerate}
\end{proof}
\subsection{A distinguished class of examples.}
In this subsection we consider vector fields $f=A+\cdots$ in PDNF with the linear part satisfying the following property:
\begin{equation}\label{onedivgen}
\text{ The } I(A_s)\text{-module } I_{{\rm div}\,A_s}(A_s)\text{ is generated by }\sigma:=x_1\cdots x_n.
\end{equation}
We note that ${\rm div}\,A_s\not=0$ in this case, otherwise the module would be generated by $1$. Moreover:
\begin{lemma}\label{onedivlem}
Given condition \eqref{onedivgen}, the module ${\mathcal C}^{\rm for}(A_s)$ is generated by $Q_1,\ldots,Q_n$, with $Q_i(x)=x_ie_i$. In particular $A=A_s$.
\end{lemma}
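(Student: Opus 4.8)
The plan is to reduce everything to the eigenvalue configuration \eqref{trivresonly}: I will show that hypothesis \eqref{onedivgen} forces \eqref{trivresonly} to hold for $A_s$ (and, as a by-product, that no $\lambda_i$ vanishes), so that Lemma \ref{trivresonlylem} applies directly and gives the asserted module structure; the equality $A=A_s$ will then drop out quickly.

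\textbf{Step 1: a combinatorial reformulation of \eqref{onedivgen}.} By Remark \ref{torfinrem}, the space $I_{{\rm div}\,A_s}(A_s)$ is spanned over $\mathbb C$ by the monomials $x^k$ with $\langle k,\lambda\rangle={\rm div}\,A_s=\lambda_1+\cdots+\lambda_n$, while $I(A_s)$ is spanned by the monomials $x^m$ with $\langle m,\lambda\rangle=0$. Hence \eqref{onedivgen} says exactly that every such monomial $x^k$ can be written as $x^k=x_1\cdots x_n\cdot\phi$ with $\phi\in I(A_s)\subseteq\mathbb C[x_1,\dots,x_n]$; comparing exponents forces $\phi=x_1^{k_1-1}\cdots x_n^{k_n-1}$, which is a polynomial precisely when $k_i\ge 1$ for every $i$ (and then $\phi\in I(A_s)$ automatically). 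So \eqref{onedivgen} is equivalent to the statement that every monomial $x^k$ with $\langle k,\lambda\rangle=\lambda_1+\cdots+\lambda_n$ is divisible by $x_1\cdots x_n$.

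\textbf{Step 2: deriving \eqref{trivresonly}.} Let $m\in\mathbb Z_+^n$ with $\langle m,\lambda\rangle=\lambda_j$, and consider the monomial $x^m\cdot\prod_{i\ne j}x_i$; its exponent vector $k$ satisfies $\langle k,\lambda\rangle=\lambda_j+\sum_{i\ne j}\lambda_i=\lambda_1+\cdots+\lambda_n$, so by Step 1 all $k_i\ge 1$. Since nothing was added in the $j$-th slot, $k_j=m_j$, whence $m_j\ge 1$. This is precisely \eqref{trivresonly}; and the special case $m=0$ shows $\lambda_j\ne 0$ for each $j$ (otherwise $0=\langle 0,\lambda\rangle=\lambda_j$ would yield $0\ge 1$). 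Thus all hypotheses of Lemma \ref{trivresonlylem} hold, and that lemma gives that ${\mathcal C}^{\rm pol}(A_s)$ is the free $I(A_s)$-module generated by $Q_1,\dots,Q_n$.

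\textbf{Step 3: passing to formal series and concluding $A=A_s$.} By Lemma \ref{resonancelem}, every $g\in{\mathcal C}^{\rm for}(A_s)$ is a formal series in resonant vector monomials $x^m e_j$ with $\langle m,\lambda\rangle=\lambda_j$; by Step 2 each such monomial equals $x^{m-e_j}Q_j$ with $x^{m-e_j}\in I(A_s)$, and grouping the terms with fixed index $j$ gives $g=\sum_{j=1}^{n}h_j\,Q_j$ with each $h_j$ a formal first integral of $A_s$ (and, conversely, every such combination lies in ${\mathcal C}^{\rm for}(A_s)$), so $Q_1,\dots,Q_n$ generate ${\mathcal C}^{\rm for}(A_s)$ as a module over the formal first integrals of $A_s$. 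Finally, since $A_s$ and $A_n$ commute, the linear vector field $x\mapsto A_n x$ lies in ${\mathcal C}^{\rm for}(A_s)$, hence $A_n x=\sum_j h_j(x)\,x_j e_j$ for suitable formal first integrals $h_j$; as the $j$-th component $h_j(x)x_j$ of $A_n x$ is a linear form, each $h_j$ must be a constant, so $A_n$ is diagonal, and being nilpotent it vanishes. Therefore $A=A_s$.

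The argument is elementary throughout; the only point needing a little care is the choice in Step 2 of the auxiliary monomial $x^m\cdot\prod_{i\ne j}x_i$, engineered so that the divisibility criterion of Step 1 isolates exactly the inequality $m_j\ge 1$, together with the routine verification in Step 3 that the polynomial module statement upgrades to formal power series via Lemma \ref{resonancelem}.
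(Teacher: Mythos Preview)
Your proof is correct and uses essentially the same idea as the paper's: the key trick of multiplying a resonant monomial $x^m$ (with $\langle m,\lambda\rangle=\lambda_j$) by $\prod_{i\ne j}x_i$ to land in $I_{{\rm div}\,A_s}(A_s)$ and then invoke divisibility by $\sigma$ is exactly what the paper does, only phrased contrapositively. Your version is in fact more complete, since you spell out the nonvanishing of the $\lambda_j$, the passage from polynomial to formal centralizer, and the deduction $A_n=0$, all of which the paper leaves to the reader.
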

\begin{proof}
Assume w.l.o.g.\  that
\[
\lambda_1=\sum_{i=2}^n m_i\lambda_i
\]
with nonnegative integers $m_i$. Then
\[
{\rm div}\,A_s=\sum_{j=1}^n\lambda_j=\sum_{i=2}^n (m_i+1) \lambda_i,
\]
thus both $\sigma$ and $x_2^{m_2+1}\cdots x_n^{m_n+1}$ are elements of ${\mathcal C}^{\rm for}(A_s)$, hence $\sigma$ cannot generate this module.
\end{proof}
We obtain rather definitive results for vector fields in this class if they satisfy a further condition on the algebra of first integrals.
\begin{theorem}\label{disalgjacthm}
Let $f=A+\cdots$ in PDNF satisfy condition \eqref{onedivgen} and assume moreover that $I(A_s)$ admits the algebraically independent generator system of monomials $\phi_1,\ldots,\phi_r$, satisfying
\[
X_{Q_j}(\phi_i)=m_{ij}\phi_i\text{  with nonnegative integers  }m_{ij},\quad 1\leq i\leq r,\,1\leq j\leq n.
\]
\begin{enumerate}[$(a)$]
\item Then one has
\[
f=A+\sum \eta_jQ_j,
\]
with
\[
\eta_i(x)=\widehat\eta_i(\phi_1(x),\ldots,\phi_r(x)),\quad 1\leq i\leq r,
\]
and the reduced vector field has then the form
\[
\widehat f(y)=\begin{pmatrix} y_1\,\sum m_{1j}\widehat \eta_j\\
                                                                   \vdots \\
                                                 y_r\,\sum m_{rj}\widehat \eta_j\end{pmatrix}.
\]
\item If $\psi$ is an inverse Jacobi multiplier for $f$, thus $\psi=\sigma\cdot\rho$ with
\[
\rho(x)=\widehat\rho(\phi_1(x),\ldots,\phi_r(x)),\quad \widehat\rho\in\mathbb C[[y_1,\ldots,y_r]],
\]
then
\[
\widetilde \psi:=y_1\cdots y_r\cdot\widehat\rho
\]
is an inverse Jacobi multiplier for the reduced vector field $\widehat f$.
\item Let the quadratic part of $\widehat f$ be given as
\[
\widehat f_2=\begin{pmatrix} y_1\,\sum \nu_{1j}y_j\\
                                                                   \vdots \\
                                                 y_r\,\sum \nu_{rj}y_j\end{pmatrix} \text{  with }\nu_{ij}\in\mathbb C.
\]
Then
\begin{itemize}
\item Whenever $r\geq 3$ then $\widehat f_2$ admits no inverse Jacobi multiplier for $(\nu_{ij})$ in a subset of $\mathbb C^{r\times r}$ of full measure. Consequently the reduced vector field admits no inverse Jacobi multiplier whenever $\widehat f_2$ is of this form.
\item In case $r=2$ there exists a subset of $\mathbb C^{2\times 2}$ of full measure such that $\widehat f_2$ with coefficients in this subset admits a unique inverse Jacobi multiplier $($up to scalar multiples$)$. Moreover for every $(\nu_{ij}^*)$ in this set there is a subset of full measure in the coefficient space of $\widehat f_3$ such that $\widehat f_2+\widehat f_3+\cdots$ does not admit an inverse Jacobi multiplier for the coefficients of $\widehat f_3$ in this latter set.
\end{itemize}
\end{enumerate}
\end{theorem}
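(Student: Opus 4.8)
\emph{The plan.} Parts $(a)$ and $(b)$ are settled by direct computation, parallel to Lemma \ref{distinglem} and to the reduction of Proposition \ref{nofofinprop}. For $(a)$, Lemma \ref{onedivlem} gives $A=A_s$ and a free generating set $Q_1,\dots,Q_n$ of ${\mathcal C}^{\rm for}(A_s)$ over $I(A_s)$, hence $f=A+\sum_j\eta_jQ_j$ with $\eta_j$ a series in the $\phi_i$; then $X_f(\phi_i)=\sum_j\eta_jX_{Q_j}(\phi_i)=\phi_i\sum_j m_{ij}\widehat\eta_j$, which by Proposition \ref{nofofinprop} is the asserted form of $\widehat f$. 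For $(b)$, Proposition \ref{divnofoprop}$(b)$ together with \eqref{onedivgen} forces any inverse Jacobi multiplier $\psi$ of $f$ to be $\psi=\sigma\rho$ with $\rho=\widehat\rho\circ\Phi\in I(A_s)$, $\widehat\rho\ne 0$; expanding $X_f(\sigma\rho)={\rm div}\,f\cdot\sigma\rho$ with $X_f(\sigma)=({\rm div}\,A+\sum_j\eta_j)\sigma$ and the expression for ${\rm div}\,f$ obtained from $f=A+\sum_j\eta_jQ_j$, one divides by $\sigma$, cancels the common term $\rho\,({\rm div}\,A+\sum_j\eta_j)$, and is left with an identity which, by algebraic independence of the $\phi_i$, is exactly the relation $X_{\widehat f}(y_1\cdots y_r\,\widehat\rho)={\rm div}\,\widehat f\cdot(y_1\cdots y_r\,\widehat\rho)$, using $X_{\widehat f}(y_1\cdots y_r)=(y_1\cdots y_r)\sum_{i,j}m_{ij}\widehat\eta_j$ and ${\rm div}\,\widehat f=\sum_j(\sum_i m_{ij})\widehat\eta_j+\sum_{i,j}m_{ij}y_i\,\partial\widehat\eta_j/\partial y_i$.

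For $(c)$ the common starting point is: if $\widehat f=\widehat f_2+\cdots$ admits an inverse Jacobi multiplier $\psi=\psi_s+\cdots$ with $\psi_s\ne 0$, then comparison of lowest-degree terms yields $X_{\widehat f_2}(\psi_s)={\rm div}\,\widehat f_2\cdot\psi_s$; moreover $s\ge 1$, since ${\rm div}\,\widehat f_2$ is Z-generically a nonzero linear form, so there is no constant multiplier. Thus it suffices to study homogeneous inverse Jacobi multipliers of the quadratic field $\widehat f_2$. Choosing $c:=\nu_{11}^{-1}e_1$ (Z-generically $\nu_{11}\ne 0$) one has $\widehat f_2(c)=c$ and, as in the proof of Theorem \ref{quadcentthm}, $D\widehat f_2(c)$ is triangular with eigenvalues $\mu_1=2$, $\mu_j=\nu_{j1}/\nu_{11}$ $(2\le j\le r)$ and trace $\sum_i\mu_i={\rm div}\,\widehat f_2(c)$. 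Imposing the full-measure condition that $1,\mu_2,\dots,\mu_r$ be $\mathbb Q$-independent, Lemma \ref{adelem} (applied to $X_{\widehat f_2}(\psi_s)=\ell\,\psi_s$ with $\ell={\rm div}\,\widehat f_2$) produces nonnegative integers $k_1,\dots,k_r,k$ with $\sum k_i+k=s$ and $\sum k_i\mu_i+k=\sum\mu_i$; rewriting as $\sum(k_i-1)\mu_i+k=0$ and using $\mu_1=2$ forces $k_i=1$ for $i\ge2$ and $k_1\in\{0,1\}$, hence $s=r$ (with $k=0$) or $s=r+1$ (with $k=2$).

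For $r\ge 3$ it then remains to rule out these two degrees generically. With $\psi_s=\sum_{|m|=s}a_my^m$ and $\widehat f_{2,i}=y_i\sum_j\nu_{ij}y_j$, the equation $X_{\widehat f_2}(\psi_s)={\rm div}\,\widehat f_2\cdot\psi_s$ becomes the linear system $\sum_{j:\,M_j\ge 1}a_{M-e_j}\,\langle M-2e_j-\mathbf 1,\,\nu_{\cdot j}\rangle=0$ over all $|M|=s+1$ ($\nu_{\cdot j}$ the $j$-th column of $(\nu_{ij})$), whose coefficients vanish identically only when $M=2e_j+\mathbf 1$, i.e.\ only for $s=r+1$. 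The existence of a nonzero solution is a determinantal (hence Zariski-closed) condition on $(\nu_{ij})$; for $s\in\{r,r+1\}$ it is proper once one exhibits, for each of these two degrees, a single $(\nu_{ij})$ for which the system is injective. Combined with the fact that for every other $s$ injectivity already follows from Lemma \ref{adelem} on the full-measure set where the eigenvalues are $\mathbb Q$-independent, the locus of $(\nu_{ij})$ admitting a nonzero homogeneous inverse Jacobi multiplier of $\widehat f_2$ lies in a measure-zero set together with two proper Zariski-closed sets; on its complement $\widehat f_2$, hence $\widehat f$, has no inverse Jacobi multiplier. For $r=2$ one instead argues positively: the Euler field $E(y)=(y_1,y_2)^\tau$ satisfies $[E,\widehat f_2]=\widehat f_2$, so by Remark \ref{jacobibasics}$(d)$ the function $\psi^{(0)}:=\det(\widehat f_2,E)=y_1y_2\big(\sum_j(\nu_{1j}-\nu_{2j})y_j\big)$ is an inverse Jacobi multiplier whenever the two rows of $(\nu_{ij})$ differ, and it is unique up to scalars because a quotient of two multipliers is a first integral (Remark \ref{jacobibasics}$(b)$) while $\widehat f_2$ is Z-generically without nonconstant first integral (argument $(iv)$ in the proof of Theorem \ref{disalgcentthm}, with $\lambda=0$), the remaining degrees $s=2$ and $s\ge4$ being excluded as above. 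Fixing such $(\nu_{ij}^*)$, any inverse Jacobi multiplier of $\widehat f_2+\widehat f_3+\cdots$ has lowest part a scalar multiple of $\psi^{(0)}$, and the next-degree comparison produces an inhomogeneous linear equation for $\psi_4$ whose solvability is a linear condition on the coefficients of $\widehat f_3$; exhibiting one cubic $\widehat f_3$ where it fails — as in Proposition \ref{ifacprop}$(c)$ — gives nonexistence on a Zariski-dense, hence full-measure, set of cubics.

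The main obstacle is the $r\ge3$ step: producing explicit $(\nu_{ij})$ for which the degree-$r$ and degree-$(r+1)$ systems are injective — equivalently, verifying that the candidate multipliers singled out by Lemma \ref{adelem} (in eigencoordinates at degree $r$ a multiple of $\zeta_1\cdots\zeta_r$, and a corresponding monomial at degree $r+1$) generically fail. The delicate feature is the single resonant degree $s=r+1$, where several equations of the linear system become vacuous: for $r=2$ this is precisely what allows $\psi^{(0)}$ to exist, and for $r\ge3$ one must check that the surviving equations still force the multiplier to vanish.
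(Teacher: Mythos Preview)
Your treatment of parts $(a)$ and $(b)$, and your use of Lemma \ref{adelem} to pin down the possible degrees $s\in\{r,\,r+1\}$ for a homogeneous inverse Jacobi multiplier of $\widehat f_2$, coincide with the paper's argument.

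The genuine gap is exactly where you flag it: for $r\ge 3$ you set up the linear system on \emph{all} homogeneous polynomials of degree $r$ and $r+1$ and then need, for each of these two degrees, an explicit $(\nu_{ij})$ at which the system is injective. You do not produce one, and in fact this route is unnecessarily hard. The paper sidesteps the obstacle by exploiting the structure you yourself established in part $(b)$: the only multipliers that matter are those of the form $\widetilde\psi=y_1\cdots y_r\cdot\widehat\rho$. Combined with the degree bound $s\in\{r,r+1\}$, this forces $\widehat\rho$ to be constant or linear. A constant $\widehat\rho$ is ruled out directly (the multiplier condition gives $\sum_i\nu_{ii}\,y_i\,\widehat\rho=0$, impossible when $\nu_{11}\ne 0$). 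Writing $\widehat\rho=\sum_i\alpha_i y_i$, the inverse Jacobi multiplier condition for $y_1\cdots y_r\,\widehat\rho$ reduces, after cancelling $y_1\cdots y_r$, to
\[
\sum_{i,j}\alpha_i\nu_{ij}y_iy_j=\sum_{i,j}\alpha_j\nu_{ii}y_iy_j,
\]
i.e.\ to the $\binom{r}{2}$ equations $\mu_{ij}\alpha_i+\mu_{ji}\alpha_j=0$ for $i<j$, with $\mu_{ij}:=\nu_{ij}-\nu_{jj}$. For $r\ge 3$ this system of $r(r-1)/2$ linear equations in $r$ unknowns is generically overdetermined: if all $\mu_{1j},\mu_{j1}$ are nonzero the first $r-1$ equations determine $\alpha_2,\ldots,\alpha_r$ as nonzero multiples of $\alpha_1$, and then the equation with $(i,j)=(2,3)$ imposes a nontrivial polynomial relation among the $\mu$'s. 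So the paper never needs to analyze the full spaces of degree-$r$ and degree-$(r+1)$ polynomials; the factor $y_1\cdots y_r$ from part $(b)$ collapses the problem to $r$ unknowns. You had this factorization in hand but dropped it when passing to part $(c)$.

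For $r=2$ your existence argument via $\psi^{(0)}=\det(\widehat f_2,E)$ is a pleasant alternative to the paper's, which simply observes that the same system $\mu_{12}\alpha_1+\mu_{21}\alpha_2=0$ is now a single equation in two unknowns, hence has a one-dimensional solution space; uniqueness is immediate from this without invoking first integrals. For the final clause the paper just cites \cite{Wa03}, Thm.~2.10, whereas your sketch via the next-degree comparison (as in Proposition \ref{ifacprop}$(c)$) is the same underlying idea spelled out.
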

\begin{remark}{\em
Note that the condition that the generators be monomials poses no restriction.
}
\end{remark}
\begin{proof}[Proof of Theorem \ref{disalgjacthm}]
For the statements in part $(a)$ compare Theorem \ref{disalgcentthm} and its proof. We now address the statements in parts $(b)$ and $(c)$.
\begin{enumerate}[$(i)$]
\item The first statement of part $(b)$ is a consequence of condition \eqref{onedivgen}. The condition for an inverse Jacobi multiplier reads as
\[
X_f(\sigma)\rho+\sigma X_f(\rho)=X_f(\sigma \rho)={\rm div}\,f\cdot\sigma\rho,
\]
and we compute
\[
X_f(\sigma)=({\rm div}\,A+\sum \eta_i)\,\sigma,
\]
\[
{\rm div}\,f={\rm div}\,A+\sum\left(\eta_i+x_i\frac{\partial \eta_i}{\partial x_i}\right).
\]
Thus the defining condition holds if and only if
\[
\sum\eta_ix_i\frac{\partial \rho}{\partial x_i}= X_f(\rho)=\sum x_i\frac{\partial \eta_i}{\partial x_i}\cdot\rho.
\]
Now we have
\[
\begin{split}
x_i\frac{\partial \rho}{\partial x_i}&=\sum_j\frac{\partial \widehat \rho}{\partial y_j}(\phi_1,\ldots,\phi_r)\cdot x_i\frac{\partial \phi_j}{\partial x_i}\\
                                                      &=\sum_j m_{ji}\phi_j\frac{\partial \widehat \rho}{\partial y_j}(\phi_1,\ldots,\phi_r)
\end{split}
\]
and
\[
\sum_i\eta_ix_i\frac{\partial \rho}{\partial x_i}=\sum_{i,j} m_{ji}\eta_i\phi_j\frac{\partial \widehat \rho}{\partial y_j}(\phi_1,\ldots,\phi_r)
\]
Likewise we evaluate the right hand side of the defining condition to obtain
\[
\sum x_i\frac{\partial \eta_i}{\partial x_i}\cdot\rho=\sum_{i,j}m_{ji}\phi_j\frac{\partial\widehat\eta_i}{\partial y_j}(\phi_1,\ldots,\phi_r)\cdot\rho.
\]
\item
Due to the algebraic independence of the $\phi_j$, the defining condition for an inverse Jacobi multiplier $\sigma\cdot\rho$ is equivalent to the condition
\[ 
\sum_{i,j} m_{ji}\widehat\eta_iy_j\frac{\partial \widehat \rho}{\partial y_j}=\sum_{i,j}m_{ji}y_j\frac{\partial\widehat\eta_i}{\partial y_j}\cdot\widehat\rho
\] 
in $\mathbb C[[y_1,\ldots,y_r]]$.
Now a straightforward computation shows that this condition holds if and only if $\widetilde\psi$ is an inverse Jacobi multiplier for the reduced vector field. Thus part $(b)$ is proven.
\item In the following we use the fact that the lowest order term of an inverse Jacobi multiplier of $\widehat f$ is an inverse Jacobi multiplier of the lowest order term $\widehat f_2$. In particular, if $\widehat f_2$ admits no inverse Jacobi multiplier then neither does $\widehat f$.
\item In order to establish degree bounds for inverse Jacobi multipliers of $\widehat f_2$ in a generic case, we use some notation and take some arguments from the proofs of Theorem \ref{quadcentthm}, part $(iii)$ and Theorem \ref{disalgcentthm}, part $(iv)$. Thus we assume $\nu_{11}\not=0$, hence $c:=\nu_{11}^{-1}e_1$ satisfies $\widehat f_2(c)=c$, and $D\widehat f_2(c)$ has eigenvalues
\[
2,\,\nu_{21}\nu_{11}^{-1},\,\ldots,\nu_{r1}\nu_{11}^{-1}.
\]
Moreover
\[
{\rm div}\,\widehat f_2=\sum_{i,j}\nu_{ij}y_j+\sum_i\nu_{ii}y_i\Rightarrow {\rm div}\,\widehat f_2(c)=2+\nu_{21}\nu_{11}^{-1}+\cdots+\nu_{r1}\nu_{11}^{-1}.
\]
Now let the degree of the inverse Jacobi multiplier $\widetilde\psi$ be equal to $s$. By Lemma \ref{adelem} in the Appendix there exist nonnegative integers $k,\,k_1,\,\ldots,k_r$ such that
\[
\begin{array}{rcl}
k+k_1+\cdots+k_r&=&s,\\
k+2k_1+\nu_{21}\nu_{11}^{-1}k_2+\cdots+\nu_{r1}\nu_{11}^{-1}k_r&=&2+\nu_{21}\nu_{11}^{-1}+\cdots+\nu_{r1}\nu_{11}^{-1}.
\end{array}
\]
If $\nu_{11},\ldots,\nu_{r1}$ are linearly independent over the rational numbers then the second identity forces
\[
k_2=\cdots=k_r=1\text{ and }k+2k_1=2.
\]
Thus either $k=0$ and $k_1=1$, or $k=2$ and $k_1=0$. In total we have the alternative
\[
s=r, \text{  or  } s=r+1.
\]
\item Moreover $\widetilde\psi=y_1\cdots y_r\cdot\widehat\rho$, hence $\widehat\rho$ is constant or linear, and the inverse Jacobi multiplier condition is equivalent to
\[
\sum_{i,j}\nu_{ij}\frac{\partial\widehat\rho}{\partial y_i} y_iy_j=\sum_i\nu_{ii}\widehat\rho y_i.
\]
Since $\nu_{11}\not=0$, $\widehat\rho$ cannot be constant, thus must have degree one. Setting
\[
\widehat\rho(y)=\sum_i\alpha_iy_i
\]
the inverse Jacobi multiplier condition is equivalent to
\[
\sum_{i,j}\alpha_i\nu_{ij}y_iy_j=\sum_{i,j}\alpha_j\nu_{ii}y_iy_j.
\]
Compare coefficients of the monomials involved: For $i=j$ the coefficients are automatically equal, for $i<j$ one finds
\[
\mu_{ij}\alpha_i+\mu_{ji}\alpha_j=0, \text{  with  } \mu_{ij}:=\nu_{ij}-\nu_{jj}.
\]
Whenever $r\geq 3$ there is a Zariski open set in $U\subseteq\mathbb C^{r\times r}$ such that this system of $r(r-1)/2$ equations for the $\alpha_i$ has only the trivial solution: If the $\mu_{j1}$ and $\mu_{1j}$, $2\leq j\leq r$ are all nonzero then $\alpha_2,\ldots,\alpha_r$ are uniquely determined as multiples of $\alpha_1$, and nonzero whenever $\alpha_1\not=0$. The further equation $\mu_{23}\alpha_2+\mu_{32}\alpha_3=0$ now  imposes nontrivial conditions on $\mu_{32}$ and $\mu_{23}$ whenever $\alpha_1\not=0$, which imply a nontrivial polynomial relation between $\mu_{j1},\,\mu_{1j},\,\mu_{23},\,\mu_{32}$. We have thus shown that all $\alpha_i$ must be zero on a nonempty Zariski open set $U$. Moreover, whenever $\nu_{11},\ldots,\nu_{r1}$ are linearly independent over $\mathbb Q$ then there exist $\nu_{ij}$ with $j>1$ such that the corresponding $(\mu_{ij})\in U$. We have thus shown that in case $r\geq 3$ no inverse Jacobi multiplier generically exists for $\widehat f_2$, hence for $\widehat f$.
\item There remains the second assertion in part $(c)$. The argument above for the case $r=2$ yields one (generically nontrivial) linear equation for $\alpha_1,\,\alpha_2$ and thus uniqueness of the inverse Jacobi multiplier for $\widehat f_2$. The remaining assertion was proven in \cite{Wa03}, Thm. 2.10.
\end{enumerate}
\end{proof}
\begin{corollary}
Let $h(x)=Bx+\sum_{j\geq 2} h_j(x)$ be a formal vector field on $\mathbb C^n$, $n\geq 3$, such that $B=B_s$ satisfies condition \eqref{onedivgen} and furthermore $I(B)$ admits an algebraically independent generator system. Then there exists an $L>0$ with the following property: The coefficient space $\widetilde W$ of $h_2+\cdots+h_L$ contains a full measure subset $\widetilde {\mathcal R}$ such that for coefficients in $\widetilde {\mathcal R}$ there exists no formal inverse Jacobi multiplier of $h$.
\end{corollary}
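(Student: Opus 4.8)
The plan is to reduce the statement to Theorem~\ref{disalgjacthm} by passing to a Poincar\'e-Dulac normal form and transferring the existence of a formal inverse Jacobi multiplier, as an equivalence, along the normalizing transformation. First I would apply a formal near-identity transformation $x=\Phi(y)$, $\Phi(0)=0$, $D\Phi(0)=I$, carrying $h$ to a vector field $f=B+\sum_{j\ge 2}f_j$ in PDNF; since $B=B_s$ is semisimple the linear part is unchanged. By hypothesis $B$ satisfies \eqref{onedivgen}, so by Lemma~\ref{onedivlem} the module ${\mathcal C}^{\rm for}(B)$ is generated by $Q_1,\dots,Q_n$; together with the assumption that $I(B)$ admits an algebraically independent generator system, which by the remark following Theorem~\ref{disalgjacthm} may be taken to consist of monomials $\phi_1,\dots,\phi_r$ with $X_{Q_j}(\phi_i)=m_{ij}\phi_i$, the vector field $f$ satisfies all the hypotheses of Theorem~\ref{disalgjacthm}.

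Next I would verify that $h$ admits a formal inverse Jacobi multiplier if and only if $f$ does. Writing $f(y)=D\Phi(y)^{-1}h(\Phi(y))$ and $J(y):=\det D\Phi(y)$, the chain rule gives $X_f(\phi\circ\Phi)=(X_h\phi)\circ\Phi$, and the transformation law for the divergence under a diffeomorphism gives ${\rm div}\,f=({\rm div}\,h)\circ\Phi-X_f(\log J)$ (this is the formal analogue of the computation in the proof of Proposition~\ref{divnofoprop}). A short computation then shows that $\phi$ satisfies $X_h(\phi)={\rm div}\,h\cdot\phi$ precisely when $\widetilde\phi:=(\phi\circ\Phi)\cdot J^{-1}$ satisfies $X_f(\widetilde\phi)={\rm div}\,f\cdot\widetilde\phi$; and since $D\Phi(0)=I$, the series $J$ is a unit in $\mathbb C[[y_1,\dots,y_n]]$, so $\widetilde\phi$ is a nonzero formal series exactly when $\phi$ is. Thus it suffices to exhibit a full-measure set of coefficients for which $f$ has no formal inverse Jacobi multiplier.

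For this I would invoke Theorem~\ref{disalgjacthm}(c). Choose $L$ large enough that the Taylor coefficients of $f_2+\dots+f_L$ determine, and map onto, the coefficients of the quadratic part $\widehat f_2$ of the reduced vector field when $r\ge 3$, and of $\widehat f_2+\widehat f_3$ when $r=2$; it suffices to take $L>\max_i{\rm deg}\,\phi_i$, respectively $L\ge 2\max_i{\rm deg}\,\phi_i+1$, since a degree-$k$ term of $\widehat f$ stems from terms of $f$ of degree at most $(k-1)\max_i{\rm deg}\,\phi_i+1$. In the case $r\ge 3$, Theorem~\ref{disalgjacthm}(c) provides a full-measure set of $(\nu_{ij})\in\mathbb C^{r\times r}$ for which $\widehat f_2$, hence $\widehat f$, hence (by part~(b)) $f$, has no inverse Jacobi multiplier. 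In the case $r=2$ it provides a full-measure set of $(\nu_{ij})$ and, for each admissible choice, a full-measure set in the coefficient space of $\widehat f_3$ outside of which $\widehat f$ has no inverse Jacobi multiplier; by Fubini's theorem the corresponding bad set in the coefficient space $\widehat W$ of $\widehat f_2+\widehat f_3$ has measure zero. Either way one obtains a full-measure set $\widehat{\mathcal R}\subseteq\widehat W$ such that $f$ has no formal inverse Jacobi multiplier whenever the reduced data lie in $\widehat{\mathcal R}$.

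Finally I would pull $\widehat{\mathcal R}$ back to $\widetilde W$. The assignment of the PDNF Taylor coefficients of $f_2+\dots+f_L$ to the Taylor coefficients of $h_2+\dots+h_L$ is rational and dominant (cf.\ the remark preceding this Corollary), and the assignment of the coefficients of $\widehat f_2$ (and $\widehat f_3$) to those of $f_2+\dots+f_L$ is linear and surjective, as in the proof of Theorem~\ref{disalgcentthm}. Since a composition of dominant rational maps between affine spaces pulls back a set of full measure to a set of full measure (its indeterminacy locus being itself of measure zero), the preimage $\widetilde{\mathcal R}\subseteq\widetilde W$ of $\widehat{\mathcal R}$ is of full measure; for coefficients in $\widetilde{\mathcal R}$, $f$ and hence $h$ admits no formal inverse Jacobi multiplier. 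The substantive work is Theorem~\ref{disalgjacthm} itself; within the present argument the main point to be careful about is the last step --- choosing $L$ so that the reductions onto $\widehat f_2$ (and, for $r=2$, $\widehat f_3$) are genuinely surjective, and checking that ``full measure'' survives each rational map, i.e.\ that the exceptional loci are null.
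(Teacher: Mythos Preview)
Your proposal is correct and follows the approach the paper intends. The paper gives no explicit proof for this corollary; it is meant to be deduced from Theorem~\ref{disalgjacthm} together with the proof hint accompanying the analogous corollary after Theorem~\ref{disalgcentthm}: ``the map sending the coefficients of a Taylor polynomial to the coefficients of the Taylor polynomial of its PDNF (made unique in a suitable manner) is rational and surjective.'' Your argument unpacks exactly this, and in fact supplies a step the paper leaves implicit---namely the verification that a formal near-identity transformation carries inverse Jacobi multipliers to inverse Jacobi multipliers via $\phi\mapsto(\phi\circ\Phi)\cdot(\det D\Phi)^{-1}$, so that nonexistence for the PDNF $f$ implies nonexistence for $h$.

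Two small comments. First, the remark you cite (``cf.\ the remark preceding this Corollary'') actually precedes the \emph{earlier} corollary, after Theorem~\ref{disalgcentthm}; this is harmless but worth correcting. Second, your final step---that full measure survives pull-back along the coefficient maps---deserves one more sentence of justification than ``dominant rational'': the normalization map $h_2+\cdots+h_L\mapsto f_2+\cdots+f_L$ is in fact polynomial and a \emph{submersion} everywhere (at each degree $j$, its differential in the $h_j$-direction is the projection onto $\ker({\rm ad}\,A_s)$, which is surjective onto the target), so preimages of Lebesgue-null sets are null. Combined with the linear surjection to the $(\nu_{ij})$ (and, for $r=2$, the $\widehat f_3$-coefficients), this gives the pull-back cleanly.
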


For dimension three the above results contribute to a relatively complete picture:
\begin{example}{\em
Let $\ell_i>1$, $1\leq i\leq 2$ be relatively prime, moreover $d_1^*,\,d_2^*\in\mathbb Z_+$ with ${\rm gcd}\,(d_1^*,\,\ell_1)={\rm gcd}\,(d_2^*,\,\ell_2)=1$, and $A={\rm diag}\,(\ell_2d_1^*,\,\ell_1 d_2^*,\,-\ell_1\ell_2)$. (According to Proposition \ref{dim3prop} such vector fields are exactly those which correspond to the distinguished algebraic setting discussed in subsection \ref{subsec42}.) Then $A=A_s$ also satisfies condition \eqref{onedivgen}, as one sees by this argument: A relation
\[
m_1\ell_2d_1^*+m_2\ell_1d_2^*-m_3\ell_1\ell_2=\ell_2d_1^*+\ell_1d_2^*-\ell_1\ell_2
\]
with $m_i\in\mathbb Z_+$ implies that $\ell_1$ divides $(m_1-1)\ell_2d_1^*$, hence $\ell_1|(m_1-1)$ by relative primeness, and consequently $m_1>0$. By the same token one finds $m_2>0$, and finally $m_3=0$ is impossible since $-\ell_1\ell_2<0$. Now Theorem \ref{disalgjacthm} shows that a vector field $f=A+\cdots$ generically admits no formal inverse Jacobi multiplier.
}
\end{example}

\section{Appendix}
\subsection{Some linear algebra}
For easy reference we collect here some facts that are used in the main part of the paper.
\begin{lemma}\label{lincommutelem}
\begin{enumerate}[$(a)$]
\item Let $B={\rm diag}\,\left(\mu_1,\ldots,\mu_n\right)$, with the equal ones among the $\mu_i$ listed consecutively, thus
\[ \mu_1=\cdots=\mu_{s_1}, \,\mu_{s_1+1}=\cdots=\mu_{s_1+s_2},\ldots,\mu_{s_1+\cdots+s_r+1}=\cdots=\mu_n,
\]
and the elements of the blocks pairwise different. Then $D$ commutes with $B$ if and only if
\[
D=\begin{pmatrix} D_1&0&\cdots&0\\
                                0&D_2& & \vdots\\
                                 \vdots& &\ddots &0\\
                                 0&\cdots&0& D_{r+1}\end{pmatrix}
\]
is in block diagonal form, with blocks of appropriate sizes.
\item  Let $N\in\mathbb C^{\ell\times\ell}$ be a strict upper triangular matrix, of rank $\ell-1$. Then $C$ commutes with $N$ if and only if $C=\sum_{i=0}^{\ell-1} \alpha_i N^i$, with $\alpha_i\in \mathbb C$.
In particular the space of matrices commuting with $N$ has dimension $\ell$.
\item In the subspace of all strictly upper triangular $\ell\times\ell$ matrices, the matrices of rank $\ell-1$ form a Zariski open and dense subset.
\end{enumerate}
\end{lemma}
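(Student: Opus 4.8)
The plan is to dispose of the three parts separately, each by elementary linear algebra. For part $(a)$ I would write $D=(d_{ij})$ and compute the commutator entrywise: the $(i,j)$ entry of $BD-DB$ equals $(\mu_i-\mu_j)d_{ij}$, so $[B,D]=0$ is equivalent to $d_{ij}=0$ whenever $\mu_i\neq\mu_j$. Since the hypothesis lists equal eigenvalues consecutively, $\mu_i=\mu_j$ holds exactly when $i$ and $j$ belong to the same consecutive block of indices, which is precisely the asserted block-diagonal shape; the converse inclusion is immediate.

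For part $(b)$ the key step is to recognize the shape of $N$. A strictly upper triangular matrix is nilpotent, and for a nilpotent matrix the number of Jordan blocks equals $\dim\ker N=\ell-\operatorname{rank}N$; hence $\operatorname{rank}N=\ell-1$ forces a single Jordan block, so $N=PJP^{-1}$ with $J$ the standard nilpotent shift (ones on the first superdiagonal). I would then invoke the classical fact that a matrix commutes with $J$ if and only if it is an upper triangular Toeplitz matrix, i.e.\ a polynomial $\sum_{i=0}^{\ell-1}\alpha_i J^i$, this space being $\ell$-dimensional because $I,J,\dots,J^{\ell-1}$ occupy pairwise distinct diagonals while $J^{\ell}=0$. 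Conjugating back, $C$ commutes with $N$ iff $P^{-1}CP=\sum\alpha_i J^i$ iff $C=\sum_{i=0}^{\ell-1}\alpha_i N^i$, and $I,N,\dots,N^{\ell-1}$ remain linearly independent since conjugation by $P$ is a linear isomorphism; this yields both the description of the centralizer and its dimension $\ell$.

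For part $(c)$ I would identify the space $\mathcal U$ of strictly upper triangular $\ell\times\ell$ matrices with $\mathbb C^{\binom{\ell}{2}}$ via the superdiagonal entries. Every matrix in $\mathcal U$ is singular, so on $\mathcal U$ the condition $\operatorname{rank}=\ell-1$ coincides with $\operatorname{rank}\geq\ell-1$; the complementary set $\{\operatorname{rank}\leq\ell-2\}$ is cut out by the simultaneous vanishing of all $(\ell-1)\times(\ell-1)$ minors, hence is Zariski closed, and it is a proper subset because the shift matrix $J\in\mathcal U$ has rank $\ell-1$. Therefore the matrices of rank $\ell-1$ form a nonempty Zariski open subset of the irreducible affine space $\mathcal U$, hence a dense one. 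None of this presents a serious obstacle; the only point deserving a little care is the rank--kernel count for nilpotent matrices used in part $(b)$ to identify $N$ with a single Jordan block, after which the standard centralizer-of-a-Jordan-block computation finishes the argument.
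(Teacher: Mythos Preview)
Your proof is correct and follows essentially the same approach as the paper: the entrywise computation for $(a)$, the reduction to a single Jordan block via the rank--nullity count and the standard Toeplitz centralizer for $(b)$, and the Zariski-closedness of rank $\leq\ell-2$ together with the shift matrix witnessing nonemptiness for $(c)$. The paper's version of $(c)$ singles out one explicit $(\ell-1)\times(\ell-1)$ minor (delete first column and last row) rather than invoking all of them, but this is the same idea.
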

\begin{proof}
Part $(a)$ is standard knowledge, and can be verified by direct calculations. For part $(b)$ we use that $N$ is conjugate to a Jordan block
\[
D=\begin{pmatrix} 0&1&0&\cdots&0\\
                             \vdots&\ddots&\ddots &\ddots &\vdots\\
                             \vdots& &\ddots &\ddots &0\\
                               \vdots& & &\ddots &1\\
                               0&\cdots&  \cdots&\cdots &0
                              \end{pmatrix}
\]
by an upper triangular matrix, and for this Jordan block the assertions may be verified by direct calculations. To prove part $(c)$, note that the determinant of the $(\ell-1)\times(\ell-1)$ submatrix with the first column and the last row deleted is generically nonzero.
\end{proof}
\subsection{Some results on quadratic vector fields}
First we recall and restate \cite{WaADE}, Proposition 4.7; the language used below is more convenient for the purpose of the present paper. {As a matter of notation, we consider higher derivatives as multilinear maps. Thus, for analytic  $f:\,\mathbb C^n\to \mathbb C^m$ the $k^{\rm th}$ derivative $D^k\,f(x)$ at $x$ is recursively defined by
\[
D^{k+1}\,f(x)(y_1,\ldots,y_k,y_{k+1}):=D\left(D^k\,f(x)(y_1,\ldots,y_k)\right)y_{k+1}
\]
for $y_1,\ldots,y_{k+1}\in\mathbb C^n$. Note that this multilinear map is also symmetric in the $y_i$, due to equality of mixed partial derivatives.
}
\begin{lemma}\label{adelem}
Let $p$ be a homogeneous quadratic vector field on $\mathbb C^n$, and $\gamma$ a $($nonzero$)$ homogeneous semi-invariant of degree $s>0$ for this vector field, thus one has an identity
\[
D\gamma(x)p(x)=\lambda(x)\gamma(x)
\]
with some linear form $\lambda$, the cofactor of $\gamma$.
 Moreover assume there exists $c\in \mathbb C^n$ such that $p(c)=c\not=0$. Then the following hold:
\begin{enumerate}[$(a)$]
\item If $\gamma(c)\not=0$ then $\lambda(c)=s$.
\item In case $\gamma(c)=0$ let $k$ be such that $0\leq k<s$ and
\[
D^k\gamma(x)(c,\ldots,c)\not=0,\quad D^{k+1}\gamma(x)(c,\ldots,c)=0.
\]
Then $x\mapsto D^k\gamma(x)(c,\ldots,c)$ is a semi-invariant of degree $s-k$ for the linear map $B: \mathbb C^n\to\mathbb C^n$, $Bx=Dp(c)x$, with cofactor $\lambda(c)-k$. If $\alpha_1,\ldots,\alpha_n$ are the eigenvalues of $B$ $($each counted according to multiplicity$)$ then there exist nonnegative integers $k_1,\ldots,k_n$ such that
\[
\sum k_i=s-k\text{  and  } \lambda(c)=k+\sum k_i\alpha_i.
\]
\end{enumerate}
\end{lemma}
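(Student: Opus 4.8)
The plan is to dispose of part $(a)$ at once using Euler's identity, and to derive part $(b)$ by restricting the semi-invariant relation to the affine line $\tau\mapsto x+\tau c$ and comparing the coefficients of one power of $\tau$. For part $(a)$: since $\gamma$ is homogeneous of degree $s$, Euler's identity gives $D\gamma(c)\,c=s\,\gamma(c)$; evaluating $D\gamma(x)p(x)=\lambda(x)\gamma(x)$ at $x=c$ and using $p(c)=c$ gives $D\gamma(c)\,c=\lambda(c)\gamma(c)$, so $s\,\gamma(c)=\lambda(c)\gamma(c)$, and dividing by $\gamma(c)\neq0$ yields $\lambda(c)=s$.

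For part $(b)$ I would first fix notation: write $\gamma_j(x):=D^j\gamma(x)(c,\dots,c)$ with $j$ arguments equal to $c$, so that $\gamma_j$ is homogeneous of degree $s-j$ and $D\gamma_j(x)\,w=D^{j+1}\gamma(x)(c,\dots,c,w)$. Because $\gamma$ has degree $s$ one has $\gamma_s=s!\,\gamma(c)=0$, and if $\gamma_{k+1}\equiv0$ then $\gamma_j\equiv0$ for every $j>k$ by repeated differentiation; thus the integer $k$ in the statement is just the largest index with $\gamma_k\not\equiv0$, it automatically satisfies $k<s$, and
\[
\Gamma(x,\tau):=\gamma(x+\tau c)=\sum_{j=0}^{k}\frac{\tau^{j}}{j!}\,\gamma_j(x)
\]
is a polynomial of degree exactly $k$ in $\tau$ with leading coefficient $\tfrac1{k!}\gamma_k(x)$.

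The key step is to substitute $z=x+\tau c$ into $D\gamma(z)p(z)=\lambda(z)\gamma(z)$. Since $p$ is homogeneous quadratic, $D^2p$ is a constant symmetric bilinear map with $D^2p(c,c)=2p(c)=2c$ and $Dp(x)\,c=Dp(c)\,x=:Bx$, whence $p(x+\tau c)=p(x)+\tau Bx+\tau^{2}c$; moreover $\lambda(x+\tau c)=\lambda(x)+\tau\lambda(c)$, $D\gamma(x+\tau c)=D_x\Gamma(x,\tau)$, and $D_x\Gamma(x,\tau)\,c=\partial_\tau\Gamma(x,\tau)$. The relation thus takes the form
\[
D_x\Gamma(x,\tau)\,p(x)+\tau\,D_x\Gamma(x,\tau)\,Bx+\tau^{2}\,\partial_\tau\Gamma(x,\tau)=\bigl(\lambda(x)+\tau\lambda(c)\bigr)\Gamma(x,\tau).
\]
Comparing the coefficients of $\tau^{k+1}$ — the summands $D_x\Gamma\,p(x)$ and $\lambda(x)\Gamma$ have $\tau$-degree only $k$ and drop out — and multiplying through by $k!$ yields
\[
D\gamma_k(x)\,Bx=(\lambda(c)-k)\,\gamma_k(x).
\]
Hence $\gamma_k=D^k\gamma(\cdot)(c,\dots,c)$ is a nonzero homogeneous semi-invariant of degree $s-k$ for the linear map $B$, with the (necessarily constant) cofactor $\lambda(c)-k$.

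It remains to read off the arithmetic condition and to identify the hard point. The identity $X_B(\gamma_k)=(\lambda(c)-k)\gamma_k$ means $\gamma_k$ lies in the $(\lambda(c)-k)$-eigenspace of $X_B$ on the space of homogeneous polynomials of degree $s-k$; by Lemma \ref{jochlem}, $X_B$ and its semisimple part $X_{B_s}$ have the same eigenvalues there, and in eigencoordinates of $B_s$ the operator $X_{B_s}$ scales a monomial $y^{\ell}$ by $\langle\ell,\alpha\rangle=\sum_i\ell_i\alpha_i$. Therefore $\lambda(c)-k=\sum_ik_i\alpha_i$ for nonnegative integers $k_i$ with $\sum_ik_i=s-k$, which is exactly the claimed relation. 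I expect the only genuinely delicate part to be the bookkeeping: verifying the quadratic Taylor expansion $p(x+\tau c)=p(x)+\tau Bx+\tau^{2}c$ together with the identification $Dp(x)c=Bx$, and extracting the coefficient of $\tau^{k+1}$ correctly (for which it matters that $\Gamma$ has $\tau$-degree exactly $k$, guaranteed by $\gamma_k\not\equiv0$). A more naive route — differentiating $D\gamma(x)p(x)=\lambda(x)\gamma(x)$ exactly $k$ times in the direction $c$ — produces a relation that couples $\gamma_k$, $\gamma_{k-1}$, $\gamma_{k-2}$ and fails to isolate $D\gamma_k\,Bx$; keeping the full dependence on $\tau$ is what makes the single coefficient comparison succeed.
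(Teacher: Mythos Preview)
Your proof is correct. Note, however, that the paper does not actually supply a proof of this lemma: it is stated in the Appendix as a restatement of \cite{WaADE}, Proposition~4.7, with no argument given. So there is no ``paper's own proof'' to compare against.

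Your approach is clean and self-contained: Euler's identity handles part~$(a)$ immediately, and for part~$(b)$ the substitution $z=x+\tau c$ together with the quadratic identity $Dp(x)c=Dp(c)x$ (which holds because $p(x)=Q(x,x)$ for a symmetric bilinear $Q$, so $Dp(x)y=2Q(x,y)=Dp(y)x$) reduces the semi-invariant relation to a polynomial identity in $\tau$ whose top coefficient yields exactly $D\gamma_k(x)\,Bx=(\lambda(c)-k)\gamma_k(x)$. The final arithmetic step via Lemma~\ref{jochlem} is the standard observation that $X_B$ and $X_{B_s}$ share eigenvalues on each $S_m$. One small bookkeeping check worth making explicit: in the extraction of the $\tau^{k+1}$ coefficient, the term $\tau^2\partial_\tau\Gamma$ contributes $\tfrac{k}{k!}\gamma_k(x)$ (zero when $k=0$), which is what produces the shift from $\lambda(c)$ to $\lambda(c)-k$; you state the outcome correctly but do not display this intermediate term.
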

{
Next we recall and specialize \cite{WaADE}, Lemma 10.4.
\begin{lemma}\label{adelemcent} Let $p$ be a homogeneous quadratic vector field on $\mathbb C^r$, and $q\not=0$ a homogeneous vector field of degree $s>1$ such that $[p\,,q]=0$.
Moreover let $c$ be such that $p(c)=c\not=0$, with eigenvalues $\mu_1,\ldots,\mu_r$ of $Dp(c)$. $($Since $2$ is an eigenvalue with eigenvector $c$ by homogeneity, we set $\mu_1=2$.$)$ Then there exist nonnegative integers $\ell,\ell_1,\ldots,\ell_r$ and some $k\in\{1,\ldots,r\}$ such that
\begin{equation}\label{quadraticcent}
\sum\ell_i+\ell=s\text{ and }\sum\ell_i\,\mu_i+\ell=\mu_k.
\end{equation}
\end{lemma}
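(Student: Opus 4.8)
The plan is to exploit the fact that $\dot x=p(x)$ has the explicit solution $\psi(t):=-c/t$ through $c$ (indeed $\dot\psi=t^{-2}c=p(\psi)$ and $\psi(-1)=c$), and to compare the behaviour of $q$ along this curve, which is constrained by $[p,q]=0$, with the behaviour forced by homogeneity. Here are the ingredients I would record first. Since $p$ is quadratic, $Dp$ is linear in its argument, so $Dp(\psi(t))=-t^{-1}B$ with $B:=Dp(c)$. Let $\Phi^\tau$ denote the local flow of $p$ near the non-equilibrium point $c$; then $\Phi^\tau(c)=-c/(-1+\tau)$ traces out $\psi$, and $D\Phi^\tau(c)$ equals the fundamental matrix $M$ of the variational equation $\dot v=-t^{-1}Bv$ (in the time $t=-1+\tau$) normalized by $M(-1)=I$, explicitly $M(t)=\exp(-B\ln(-t))$. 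Finally, the relation $[p,q]=0$ is equivalent to $Dq(x)p(x)=Dp(x)q(x)$, i.e.\ to the flow-invariance $q(\Phi^\tau x)=D\Phi^\tau(x)\,q(x)$.

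Next, since $q\ne0$ is homogeneous of degree $s$, let $k_0\in\{0,1,\dots,s\}$ be its order of vanishing at $c$, so $D^jq(c)=0$ for $j<k_0$ and $D^{k_0}q(c)\ne0$. Differentiating the identity $q(\Phi^\tau x)=D\Phi^\tau(x)q(x)$ exactly $k_0$ times at $x=c$ in arbitrary directions $v_1,\dots,v_{k_0}$ (Leibniz on the right, Fa\`a di Bruno on the left) and discarding every term in which fewer than $k_0$ of the derivatives hit $q$ — these all vanish, on the right because $D^jq(c)=0$ for $j<k_0$, and on the left because $D^jq$ evaluated at the scalar multiple $\Phi^\tau(c)$ of $c$ is proportional to $D^jq(c)$ by homogeneity — one is left with
\[
D^{k_0}q(\psi(t))\bigl(M(t)v_1,\dots,M(t)v_{k_0}\bigr)=M(t)\,D^{k_0}q(c)(v_1,\dots,v_{k_0})
\]
for all $v_1,\dots,v_{k_0}\in\mathbb C^r$ (for $k_0=0$ this is simply $q(\psi(t))=M(t)q(c)$). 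Now choose $v_1,\dots,v_{k_0}$ with $\xi:=D^{k_0}q(c)(v_1,\dots,v_{k_0})\ne0$; by homogeneity $D^{k_0}q(\psi(t))=(-t)^{-(s-k_0)}D^{k_0}q(c)$. Decomposing the $v_i$ and $\xi$ along the generalized eigenspaces of $B$ and substituting $u=\ln(-t)$, both sides become finite sums $\sum e^{-\theta u}(\text{vector polynomial in }u)$: on the left the exponents $\theta$ run through the numbers $(s-k_0)+\mu_{i_1}+\dots+\mu_{i_{k_0}}$ (repetitions allowed among the $i_j$), while on the right $M(t)\xi=e^{-Bu}\xi\ne0$ is a nonzero sum over $\theta$ ranging through the eigenvalues $\mu_1,\dots,\mu_r$ of $B$.

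Comparing these two expansions and invoking the linear independence of the functions $u\mapsto e^{-\theta u}u^m$, at least one exponent occurring on the left must be an eigenvalue of $B$: there are indices $i_1,\dots,i_{k_0}$ and some $k$ with $(s-k_0)+\mu_{i_1}+\dots+\mu_{i_{k_0}}=\mu_k$. Setting $\ell:=s-k_0\ge0$ and letting $\ell_i$ be the number of $j$ with $i_j=i$ gives $\sum_i\ell_i+\ell=s$ and $\sum_i\ell_i\mu_i+\ell=\mu_k$, which is the assertion. The part requiring genuine care is the bookkeeping of the $k_0$-fold differentiation — it becomes clean once one observes that all low-order contributions are killed by the homogeneity of $q$ at the scalar multiple of $c$ — and, if $B$ is not semisimple, the exponent comparison: $M(t)$ then also carries logarithmic factors, but the set of exponents is still exactly the eigenvalues of $B$, so the argument goes through unchanged. (Alternatively, the statement is the cofactor-zero specialization of \cite{WaADE}, Lemma 10.4, and the argument above is precisely the specialization of its proof.)
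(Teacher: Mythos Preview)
Your proof is correct. The paper itself does not prove this lemma but merely cites it as a specialization of \cite{WaADE}, Lemma~10.4; your explicit argument via the ray solution $\psi(t)=-c/t$, the variational equation $M(t)=\exp(-B\ln(-t))$, and exponent matching after decomposing along generalized eigenspaces of $B$ is precisely the method underlying that reference, as you yourself note in the closing parenthetical remark.
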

}
\medskip

\noindent{{\bf Acknowledgements.}
 NK gratefully acknowledges support by the DFG Research Training Group GRK 1632 ``Experimental and Constructive Algebra''.  SW thanks the School of Mathematical Sciences of Shanghai Jiao Tong University for its hospitality and for creating a congenial research environment during a visit in fall 2019.
} XZ is partially supported by NNSF of China grant numbers 11671254 and 11871334



\begin{thebibliography}{99}


\bibitem{ALP14}  W. Aziz, J. Llibre and C. Pantazi, Centers of quasi-homogeneous polynomial differential equations of degree three, {\it Adv. Math.} {\bf 54} (2014), 233--250.

\bibitem{BG} L.R. Berrone and H. Giacomini, Inverse Jacobi multipliers, {\it Rend. Circ. Mat. Palermo (2)} {\bf 52} (2003), 77--130.

\bibitem{Bi} L. Bianchi, {\it Lezioni sulla teoria dei gruppi continui finiti di trasformazioni.} Spoerri, Pisa (1903).

\bibitem{Br89} A.D. Bruno,
{\it Local Methods in Nonlinear Differential Equations}. 
Springer-Verlag, Berlin, 1989.

\bibitem{CLN} D.~Cerveau and A.~Lins Neto, {\it Commuting vector fields,} Preprint, arXiv 1906.02109v1 (2019).

\bibitem{CLV14} I.\,E. Colak, J. Llibre and C. Valls,  Hamiltonian nilpotent centers of linear plus cubic homogeneous polynomial vector fields, {\it Adv. Math.} {\bf 259} (2014), 655--687.

\bibitem{CLOS} D. Cox, J. Little and D. O'Shea, {\it  Ideals, varieties, and algorithms.} Fourth edition. Springer, Cham (2015).

\bibitem{FGG} E. Freire, A. Gasull and A. Guillamon, Limit cycles and Lie symmetries, {\it Bull. Sci. Math.} 131 (2007), no. 6, 501--517.

\bibitem{GMS} I.A.  Garcia, S. Maza and D.S. Shafer,  Properties of monodromic points on center manifolds in $\mathbb R^3$ via Lie symmetries, {\it J. Dynam. Differential Equations} {\bf 25} (2013), no. 4, 981--1000.


\bibitem{GGL} J. Gin\'e, M. Grau and J. Llibre, Universal centres and composition conditions, {\it Proc. London Math. Soc.} (3) {\bf 106} (2013), 481--507.

\bibitem{Her} R.~Hermann, {\it Differential geometry and the calculus of variations.} Academic Press, New York (1968).

\bibitem{Kruffdiss} N.~Kruff, {\it Local invariant sets of analytic vector fields.} Doctoral thesis, RWTH Aachen (2018).

URL {\tt{http://publications.rwth-aachen.de/record/745852/files/745852.pdf}}

\bibitem{Li 1} S. Lie, {\it Differentialgleichungen.} (Reprint) Chelsea, New York (1967).


\bibitem{LV11} J. Llibre and C. Valls, Classification of the centers and their isochronicity for a class of polynomial differential systems of arbitrary degree, {\it Adv. Math.} {\bf 227} (2011), no. 1, 472--493.

\bibitem{Ol 1} P.\,J. Olver, {\it Applications of {Lie} groups to differential equations.} Springer, New York (1986).

\bibitem{Sha} I.\,R. Shafarevich,  {\it Basic algebraic geometry. 1. Varieties in projective space.} Third Edition. Springer, Heidelberg (2013).

\bibitem{Ste} H. Stephani, {\it Differential equations. Their solution using symmetries.} Cambridge University Press (1989).

\bibitem{Sto} L. Stolovitch, Singular complete integrability, {\it Inst. Hautes \'Etudes Sci. Publ. Math.} No. 91 (2000), 133--210 (2001).

\bibitem{WaADE} S. Walcher, {\it Algebras and differential equations.} Hadronic Press, Palm Harbor (1991).

\bibitem{Wa91} S. Walcher,
On differential equations in normal form,  {\it Math. Ann.} {\bf 291} (1991), 293--314.

\bibitem{WaMul} S. Walcher, Multi-parameter symmetries of first order ordinary differential equations, 
{\it J. Lie Theory} {\bf 9} (1999), 249--269.

\bibitem{Wa00} S. Walcher,
On the Poincar\'e problem, {\it J. Differential Equations} {\bf 166} (2000), 51--78.

\bibitem{Wa00b} S. Walcher, On convergent normal form transformations in presence of symmetries,
{\it J. Math. Anal. Appl.} {\bf 244} (2000), 17--26.

\bibitem{Wa03} S. Walcher, Local integrating factors,  {\it J. Lie Theory} {\bf 13} (2003), 279--289.

\bibitem{WeZh16} S. Weng and X. Zhang,
Integrability of vector fields versus inverse Jacobian multipliers and normalizers, 
{\it Discrete Cont. Dyn. Sys.} {\bf 36} (2016), 6539--6555.

\bibitem{Zh17} X. Zhang,
{\it Integrability of Dynamical Systems: Algebra and Analysis}. Developments in Mathematics {\bf 47}, Springer Nature, Singapore, 2017.

\bibitem{Zu02} N.T.~Zung,
Convergence versus Integrability in Poincar\'e--Dulac Normal Form,
{\it Math. Res. Lett.} {\bf 9} (2002), 217--228.

\end{thebibliography}
\end{document}